\documentclass[reqno, 11pt]{amsart}
\usepackage{graphicx, color, amsmath, amsfonts, amssymb, amsthm, amscd}
\usepackage{enumerate}
\usepackage[colorlinks=true, pdfstartview=FitV, linkcolor=blue,
  citecolor=blue, urlcolor=blue,pagebackref=false]{hyperref}

\setlength{\oddsidemargin}{5mm}
\setlength{\evensidemargin}{5mm}
\setlength{\textwidth}{150mm}
\setlength{\headheight}{0mm}
\setlength{\headsep}{12mm}
\setlength{\topmargin}{10mm}
\setlength{\textheight}{200mm}
\setcounter{secnumdepth}{2}
\tolerance 5000
%\parskip   4pt
%\parindent 0pt

% Commands used all the time:

\newcommand{\bbG}{\mathbb{G}}
\newcommand{\bbE}{\mathbb{E}}
\newcommand{\bbP}{\mathbb{P}}
\newcommand{\bbT}{\mathbb{T}}

\newcommand{\bbN}{\mathbb{N}}
\newcommand{\bbZ}{\mathbb{Z}}
\newcommand{\bbR}{\mathbb{R}}

\newcommand{\cI}{\ensuremath{\mathcal I}}
\newcommand{\cE}{\ensuremath{\mathcal E}}

\newcommand{\cX}{\ensuremath{\mathcal X}}
\newcommand{\cN}{\ensuremath{\mathcal N}}
\newcommand{\cA}{\ensuremath{\mathcal A}}
\newcommand{\cB}{\ensuremath{\mathcal B}}
\newcommand{\cC}{\ensuremath{\mathcal C}}
\newcommand{\cL}{\ensuremath{\mathcal L}}

\newcommand{\rmB}{{\rm B}}
\newcommand{\rmS}{{\rm S}}

\newcommand{\lra}[1]{\stackrel{{#1}}{\longrightarrow}}
\newcommand{\eqt}[1]{\stackrel{{#1}}{=}}

% Theorems etc:
\newtheorem{theorem}{Theorem}[section]
\newtheorem{cor}[theorem]{Corollary}
\newtheorem{lem}[theorem]{Lemma}
\newtheorem{prop}[theorem]{Proposition}
\newtheorem{thm}[theorem]{Theorem}

\begin{document}

\title{The Williams Bjerknes Model on Regular Trees}
\author{Oren Louidor}
\address{Mathematics Department\\
 University of California Los Angeles\\
	Los Angeles, CA, USA}
\email{louidor@math.ucla.edu}
\thanks{The research of the first author was supported by a Simons Postdoctoral Fellowship}

\author{Ran J. Tessler}
\address{Mathematics Department\\
The Hebrew University of Jerusalem\\
Jerusalem, Israel}
\email{ran.tessler@mail.huji.ac.il}

\author{Alexander Vandenberg-Rodes}
\address{Mathematics Department\\
 University of California Irvine \\
	Los Angeles, CA, USA}
\email{vandenbe@math.uci.edu}

\begin{abstract}
We consider the Williams Bjerknes model, also known as the biased voter model on the $d$-regular tree $\bbT^d$, where $d \geq 3$. Starting from an initial configuration of ``healthy'' and ``infected'' vertices, infected vertices infect their neighbors at Poisson rate $\lambda \geq 1$, while healthy vertices heal their neighbors at Poisson rate $1$. All vertices act independently. It is well known that starting from a configuration with a positive but finite number of infected vertices, infected vertices will continue to exist at all time with positive probability iff $\lambda > 1$. We show that there exists a threshold $\lambda_c \in (1, \infty)$ such that if $\lambda > \lambda_c$ then in the above setting with positive probability all vertices will become eventually infected forever, while if $\lambda < \lambda_c$, all vertices will become eventually healthy with probability $1$. In particular, this yields a complete convergence theorem for the model and its dual, a certain branching coalescing random walk on $\bbT^d$ -- above $\lambda_c$. We also treat the case of initial configurations chosen according to a distribution which is invariant or ergodic with respect to the group of automorphisms of $\bbT^d$.
\end{abstract}

\maketitle

\section{Introduction and Results}
We study the {\em Williams Bjerknes model} (henceforth WB process), also known as the {\em biased voter model}, on the $d$-regular tree $\bbT = \bbT^d$ for $d \geq 3$. This a continuous time Markov process whose state space is $\cX := \{-,+\}^\bbT$, i.e. the set of all configurations (assignments) of $\pm$ to the vertices of the tree. ``$+$ vertices'' will be thought of as {\em infected}, while ``$-$ vertices'' as {\em healthy}. Starting from some initial configuration $\xi_0 \in \cX$, infected vertices infect each of their neighbors at Poisson rate $\lambda$, where $\lambda \geq 1$ is the {\em infection rate} parameter, while healthy vertices heal each of their neighbors at Poisson rate $1$. All vertices act independently. We shall denote by $\xi_t^{\xi_0, \bbT, \lambda}$ the state of this process at time $t$ and often omit some or all of the superscripts when they are clear or irrelevant. 

Formally, $(\xi_t^{\xi_0, \bbT, \lambda} :\: t \geq 0)$ is a Markov spin-system whose generator is the closure in $\cC(\cX)$ of the operator (defined on a suitable sub-space of $\cC(\cX)$).
\begin{equation}
\label{eqn:1}
	\cL f (\xi) = \sum_x  \big( 1_{\{\xi(x) = +\}} + \lambda 1_{\{\xi(x) = -\}} \big)	
		 \big| \{y \sim x :\: \xi(y) \neq \xi(x) \} \big| \big[ f(\xi^x) - f(\xi) \big] \,,
\end{equation}
where $\xi^x$ is equal to $\xi$ except at the vertex $x$ where it has the opposite sign and $x \sim y$ means that $x$ and $y$ are neighboring vertices in $\bbT$. We shall identify a configuration $\xi$ with the subset of vertices which are infected under it, i.e. the set $\{x \in \bbT :\: \xi(x) = +\}$.

This process was introduced in 1972 by Williams and Bjerknes~\cite{williams1971stochastic} as a model for tumor growth and independently by Schwartz~\cite{schwartz1977applications} in 1977 as an example of a particle system with an increasing dual. It is closely related to both the {\em voter model} (the case of $\lambda = 1$) and the {\em contact process} (healing rates are fixed and do not depend on the number of healthy neighbors). As such it exhibits behavior which is similar to both models (this will be further discussed below). For standard texts on all these models see \cite{liggett2004interacting, liggett1999stochastic}

The main question in this model, both from a mathematical and a biological point of view, is that of survival. Namely, starting from a finite non-empty initial configuration $\xi_0$, i.e., $0 < |\xi_0| < \infty$ (where $|\xi_0|$ is the cardinality of $\xi_0$), whether infected sites will continue to exist at all times or become extinct. As was noticed by Williams and Bjerknes, observed at the times of transition: $0=\tau_0, \tau_1, \tau_2, \dots$, the process $(|\xi_{\tau_k}| :\: k=0,1,\dots)$ is just a nearest-neighbor random walk on $\bbZ_+$ with an absorbing state at $0$ and drift 
\begin{equation}
\label{eqn:303}
	-1 \frac{1}{\lambda+1} + 1 \frac{\lambda}{\lambda+1} = \frac{\lambda-1}{\lambda+1} \,.
\end{equation}
Therefore {\em global} survival, i.e.
\begin{equation}
\label{eqn:301}
	\Omega_g^{\xi_0} := \big\{ \sup \{t \geq 0 :\: 
		\xi^{\xi _0}_t \neq \emptyset\} = \infty \big\} \,,
\end{equation}
has probability 
\begin{equation}
\label{eqn:2}
	\bbP(\Omega_g^{\xi_0}) = 1-\lambda^{-|\xi_0|} \,,
\end{equation}
which for finite non empty $\xi_0$, is positive if and only if $\lambda > 1$ (the reason for the term ``global'' will become apparent shortly). In other words, the threshold for the possibility of global survival is $\lambda_g = 1$ regardless of the underlying graph, as long as it is infinite, connected and has a bounded degree (this can be relaxed, but some restrictions are needed to ensure that the process is well-defined).

In the lattice case, based on numerical simulations Williams and Bjerknes predicted that once the infection survives, the set of infected sites will ''roughly'' look like an ever growing ``blob'' around the initially infected vertex. This was proved by Bramson and Griffeath in 1980 \cite{BG1, BG2} who gave a shape theorem with a linear rate for the subset of infected sites -- for $\bbZ^d$ in all $d \geq 1$ and any $\lambda > 1$. (This is similar to the shape theorem for the Richardson Growth Model, which was proved by Richardson \cite{richardson1973random} and Kesten \cite{kesten1973contribution}.) 
Thus, in particular on $\bbZ^d$ for all $\lambda > 1$, global survival implies {\em complete} survival, namely
\begin{equation}
\label{eqn:3}
	\Omega_c^{\xi_0} := \big\{ \sup \{t \geq 0 :\: \xi_t^{\xi_0} \not \ni x\} < \infty \big\} \,.
\end{equation}
where $x$ is any vertex of $\bbZ^d$. Notice that except for an event of zero probability, on $\Omega_c^{\xi_0}$ eventually all vertices will become infected, hence the choice of $x$ is immaterial in the above definition.

On the $d$-regular tree, the situation is more intricate and so far was less understood. Madras, Schinazi and Durrett \cite{MS1} showed that for $d \geq 3$, survival can be global but not complete. More precisely, for $\xi_0 \in \cX$ and any $x \in \bbT$, define {\em local} survival as the event
\begin{equation}
\label{eqn:302}
	\Omega_l^{\xi_0} := \{ \sup \{t \geq 0 :\: \xi_t^{\xi_0} \ni x\} = \infty \} \,,
\end{equation}
Then for all $d \geq 3$, there exists $\lambda'$ strictly higher than $\lambda_g = 1$, such that for any finite non-empty $\xi_0$, if $\lambda \in (1, \lambda')$ then $\bbP(\Omega^{\xi_0}_l) = 0$. In other words, if $\lambda \in (1, \lambda')$ the infection can survive, but it must eventually ``drift to infinity''. Letting 
\begin{equation}
	\lambda_l(\bbT) := \inf \big\{\lambda > 0 :\: \bbP(\Omega^{\xi_0}_l) > 0 \big\}
\end{equation}
denote the threshold value for local survival, where $\xi_0$ is any finite non-empty configuration, whose precise value is immaterial, they are able to show that
\begin{equation}
	\lambda_l(\bbT^d) \geq \frac{d}{2\sqrt{d-1}} \,.
\end{equation}
However, it was not clear whether $\lambda_l(\bbT) < \infty$ nor what exactly happens above this threshold. More precisely, if we define
\begin{equation}
	\lambda_c(\bbT) := \inf \big\{\lambda > 0 :\: \bbP(\Omega^{\xi_0}_c) > 0 \big\}
\end{equation}
then it is not clear whether $\lambda_c(\bbT) < \infty$ and whether its value coincides with that of $\lambda_l(\bbT)$ or strictly larger than it. Had $\lambda_l(\bbT) < \lambda_c(\bbT) < \infty$ been the case, there would have been three phases for the model: global but not local nor complete survival, local but not complete survival and then complete survival.

The notion of local survival and the existence of an intermediate phase where survival is global but not local was first observed by Pemantle in the context of the contact process on trees \cite{pemantle1992contact}. By finding upper and lower bounds on the infection thresholds for global, resp. local survival he was able to conclude that there is  an intermediate regime for $\bbT^d$ when $d \geq 4$. Liggett~\cite{liggett1996multiple} and then~\cite{stacey1996existence} showed that this is also true for $d=3$. 

By adapting the martingale methods of Pemantle, one can fairly easily obtain bounds on the threshold values $\lambda_l(\bbT)$ and $\lambda_c(\bbT)$. 
\begin{prop}
\label{prop:Bounds}
Let $d \geq 3$.  
\begin{enumerate}
  \item \label{item:CBOL1} 
    $\frac{d}{2\sqrt{d-1}} \leq \lambda_l(\bbT^d) \leq \min \Big (2d ,\, \frac{4d}{(\sqrt{d-1}-4) \vee 0} \Big)$.
  \item \label{item:CBOL2} 
  	$\lambda_c(\bbT^d) \leq (d-1) \, \vee \, \lambda_l(\bbT^d)$. 
\end{enumerate}
\end{prop}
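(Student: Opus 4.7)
The lower bound $\lambda_l(\bbT^d)\geq d/(2\sqrt{d-1})$ is the theorem of Madras, Schinazi and Durrett \cite{MS1}: one uses the principal positive eigenfunction $\phi(x)=\rho_*^{|x|}$ ($\rho_*=1/\sqrt{d-1}$) of the simple random walk on $\bbT^d$, whose eigenvalue is the spectral radius $2\sqrt{d-1}/d$, to produce a super-martingale $e^{-\alpha t}\sum_{x\in\xi_t}\phi(x)$ whenever $\lambda<d/(2\sqrt{d-1})$, forcing $\bbP(x\in\xi_t)\to 0$ for every fixed vertex $x$ and ruling out local survival. I would recall this computation verbatim.

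For the upper bound on $\lambda_l(\bbT^d)$, the plan is to adapt Pemantle's \cite{pemantle1992contact} weighted-martingale method. Fix a reference vertex $o$ and, for $\rho\in(0,1)$ to be chosen, set
\begin{equation*}
W_t \;:=\; \sum_{x\in\xi_t}\rho^{|x|},
\end{equation*}
with $|x|$ the graph distance from $o$. An edge-by-edge computation of the generator \eqref{eqn:1} gives
\begin{equation*}
\cL W(\xi) \;=\; \sum_{\{x,y\},\; \xi(x)=+,\; \xi(y)=-}\big(\lambda\rho^{|y|}-\rho^{|x|}\big),
\end{equation*}
so each outward boundary edge ($|y|=|x|+1$) contributes $\rho^{|x|}(\lambda\rho-1)$ and each inward one ($|y|=|x|-1$) contributes $\rho^{|x|-1}(\lambda-\rho)$. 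Hence $\cL W\geq 0$ whenever $\rho\geq 1/\lambda$, while $W_t\leq\sum_{x\in\bbT^d}\rho^{|x|}<\infty$ whenever $\rho<1/(d-1)$. For $\lambda$ large enough that the interval $[1/\lambda,1/(d-1))$ is nonempty, $(W_t)$ is a bounded non-negative sub-martingale, which by Doob converges a.s.\ and in $L^1$ to a limit $W_\infty$ with $\bbE W_\infty\geq\bbE W_0=1>0$. Since $\rho^{|x|}\to 0$ as $|x|\to\infty$, the event $\{W_\infty>0\}$ forces the infected set to remain inside a (random) finite ball around $o$ at all sufficiently large times, and a standard Markovian recurrence argument inside that ball then yields local survival at $o$. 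The quantitative bounds $2d$ and $4d/(\sqrt{d-1}-4)$ are obtained by optimizing $\rho$: a crude choice suffices for the former, while the ``spectral'' choice $\rho\sim 1/\sqrt{d-1}$ combined with a second-moment refinement of $W_t$ (to control the increments of the compensator in the absence of independence) yields the latter, which becomes non-trivial precisely when $\sqrt{d-1}>4$. The main obstacle is this passage from $\bbE W_\infty\geq 1$ to actual local survival at the specific vertex $o$, which relies on the exponential decay of the weight together with finite-state recurrence in the support ball.

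For $\lambda_c(\bbT^d)\leq (d-1)\vee\lambda_l(\bbT^d)$, fix $\lambda$ exceeding both $d-1$ and $\lambda_l$ and start from $\xi_0=\{o\}$. Local survival together with the attractivity of WB and a restart/regeneration argument produces a positive-probability event on which $o\in\xi_t$ for all $t$ exceeding some finite random time $T_0$; the key observation is that the healing rate at $o$ vanishes as soon as all $d$ neighbours of $o$ are simultaneously infected, a configuration attained with positive probability within a single local excursion. Conditional on this stabilization, on each of the $d$ oriented subtrees hanging off $o$ the WB dynamics dominate an oriented Galton--Watson process: a vertex $x$ whose parent is stably infected spreads to each of its $d-1$ forward children at rate $\lambda$ while being healed only by its $d-1$ forward children (at rate $1$ each when healthy). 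A direct exponential-race computation gives expected offspring $\sum_{k=1}^{d-1}(\lambda/(\lambda+1))^{k}$, which exceeds $1$ as soon as $\lambda>d-1$, so super-criticality of the branching comparison gives positive probability that the infection permanently covers every vertex, i.e.\ complete survival. The main obstacle is, as in the $\lambda_l$ bound, the bootstrap from ``infected at arbitrarily large times'' to the stabilization event at $T_0$, handled via attractivity plus the finite-ball excursion.
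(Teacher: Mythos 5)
Your proposal mixes together, and in places misidentifies, the two halves of the paper's proof, and as a result both the upper bound on $\lambda_l$ and part~(\ref{item:CBOL2}) have real gaps.

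\emph{Lower bound in~(\ref{item:CBOL1}).} You simply cite MS1 and the spectral eigenfunction argument. The paper instead builds a genuinely new nonnegative supermartingale $M_t=\sum_{x\in\xi_t}\alpha^{h(x)}$ using a \emph{signed height} $h(x)$ relative to a fixed end of the tree (so $h$ can be negative), shows $M_t$ is a supermartingale precisely when $\lambda(d-1)\alpha^2-d\alpha+\lambda\le 0$, and extracts $\lambda\le d/(2\sqrt{d-1})$ from the discriminant. The point of redoing it this way, explicitly noted in the text, is robustness: unlike the MS1 computation, this does not use tree isotropy and could transfer to, e.g., Galton--Watson trees. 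Citing MS1 is acceptable but loses that content.

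\emph{Upper bound in~(\ref{item:CBOL1}): this is where the main gap is.} You try to extract the numbers $2d$ and $4d/((\sqrt{d-1}-4)\vee 0)$ from your submartingale $W_t=\sum_{x\in\xi_t}\rho^{|x|}$ by ``optimizing $\rho$'' and a ``second-moment refinement.'' That cannot work: the submartingale and boundedness constraints force $\rho\in[1/\lambda,\,1/(d-1))$, and the \emph{only} condition this interval being nonempty imposes on $\lambda$ is $\lambda>d-1$. There is no degree of freedom left whose optimization produces Pemantle's constants. Those numbers come from somewhere else entirely: the paper couples WB to the contact process on $\bbT^d$ in which each site heals at the \emph{constant} rate $d$ (the maximal WB healing rate) and infects at rate $\lambda$, so that $\zeta_t\le\xi_t$, and then invokes Pemantle's Theorem~2.2 for that contact process to get local survival once $\lambda>\min\bigl(2d,\,4d/((\sqrt{d-1}-4)\vee 0)\bigr)$. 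You need this comparison; the martingale will not produce it. Moreover, your passage from $\bbE W_\infty\ge 1$ to local survival is wrong as stated: $W_\infty>0$ does not force the infected set into a finite ball (many distant vertices can still carry positive total weight). The correct route, which the paper uses in its proof of part~(\ref{item:CBOL2}), is a \emph{fixation} argument: a.s.\ convergence of the bounded submartingale forces each vertex to flip only finitely often (else $W_t$ would oscillate by $\rho^{|v|}$ forever), neighboring vertices must fixate to the same sign, so $\xi_t\to\emptyset$ or $\xi_t\to\bbT$, and $W_\infty>0$ selects $\bbT$.

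\emph{Part~(\ref{item:CBOL2}).} Your regeneration-plus-branching argument is not the paper's, and as written it does not close. The claim that once $o$ is ``stably infected'' each subtree dominates a supercritical oriented Galton--Watson process is not established: a vertex $x$ whose parent is permanently infected still gets healed by its own children, so $x$ is not itself stabilized, and there is no domination of a Galton--Watson tree without a comparison theorem that you have not supplied. The expected-offspring expression $\sum_{k=1}^{d-1}(\lambda/(\lambda+1))^k$ is asserted rather than derived, and the stated threshold $\lambda>d-1$ does not match it. The paper's proof is much cleaner and uses precisely the submartingale you introduced for the \emph{wrong} bound: $M_t=\sum_{x\in\xi_t^{\xi_0}}\alpha^{\rho(0,x)}$ is a bounded submartingale for $\lambda^{-1}<\alpha<(d-1)^{-1}$, which exists once $\lambda>d-1$; convergence plus fixation gives $\xi_t\to\emptyset$ or $\bbT$ a.s., and $\lambda>\lambda_l(\bbT)$ rules out $\emptyset$ on a positive-probability event, whence $\bbP(\Omega_c^{\xi_0})>0$. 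So the submartingale belongs in part~(\ref{item:CBOL2}), not in the upper bound of part~(\ref{item:CBOL1}).
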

This shows that both local and complete survival occur for large enough values of $\lambda$, but does not settle the question of whether there is a second intermediate phase of local but not complete survival. The lower bound in part~\ref{item:CBOL1} is the same as the one obtained in~\cite{MS1}. However, the martingale approach used here seems more robust, as it does not rely on the tree isotropy, which is exploited in~\cite{MS1}. Therefore, it could be used to handle other tree-like graphs which are less regular (e.g. a realization of a super-critical Galton-Watson process). It should be noted that the argument leading to part~\ref{item:CBOL2} of the proposition, can be applied to $\bbZ^d$ as well. In this case one gets $\lambda_c(\bbZ^d) = \lambda_g = 1$ for all $d \geq 1$, thereby providing a very short proof for~\eqref{eqn:3}, albeit without a shape theorem.

\medskip
It requires much more work to show:
\begin{thm}
\label{thm:complete} 
For all $d \geq 3$ we have $\lambda_c(\bbT^d) = \lambda_l(\bbT^d)$. 
\end{thm}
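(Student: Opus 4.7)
The easy direction $\lambda_l(\bbT^d) \leq \lambda_c(\bbT^d)$ follows from the inclusion $\Omega_c^{\xi_0} \subseteq \Omega_l^{\xi_0}$: complete survival at $x$ is in particular local survival at $x$. For the reverse, fix $\lambda > \lambda_l(\bbT^d)$ and an arbitrary vertex $o$. By the vertex-transitivity of $\bbT^d$ and the monotonicity in the initial condition provided by the Harris graphical construction of the WB process, it suffices to exhibit a \emph{single} finite $\xi_0$ with $\bbP(\Omega_c^{\xi_0}) > 0$; the natural choice is $\xi_0 = \{o\}$.

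I propose to split the task into a \emph{reaching} step and a \emph{stabilization} step. The reaching step is to show that on $\Omega_l^{\{o\}}$, which has positive probability by hypothesis, the process $\xi_t^{\{o\}}$ almost surely hits, at some finite time, a prescribed finite seed configuration $\eta_* \ni o$. This should be soft: on $\Omega_l^{\{o\}}$ the vertex $o$ is infected at an unbounded sequence of stopping times $T_n$; by the strong Markov property, conditional on $\mathcal{F}_{T_n}$ the Poisson clocks in the interval $[T_n, T_n + 1]$ have a uniform positive probability $p$ of firing precisely the sequence of infections that produces $\eta_*$ while no healing clock rings on $\eta_*$ or its boundary; an iterated conditioning gives $\bbP(\eta_* \text{ never reached}, \Omega_l^{\{o\}}) \leq (1-p)^n \to 0$. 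The stabilization step is to choose $\eta_*$ so large that
\[
\bbP(\Omega_c^{\eta}) \geq r \quad \text{for every configuration } \eta \supseteq \eta_*,
\]
with some $r>0$. Combining the two via the strong Markov property at the hitting time of $\{\xi \supseteq \eta_*\}$ then yields $\bbP(\Omega_c^{\{o\}}) \geq r\,\bbP(\Omega_l^{\{o\}}) > 0$.

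The main obstacle is the stabilization step, and this is where the tree geometry must be genuinely used. Removing $\eta_*$ disconnects $\bbT^d$ into finitely many ``half-trees,'' each rooted at a vertex adjacent to $\eta_*$ and isomorphic to a rooted tree whose non-root vertices all have exactly $d-1$ children. The plan is to show that, starting from any $\eta \supseteq \eta_*$, the infection invades each half-tree with uniformly positive probability, and to combine the invasion events across the $d$ directions using the attractiveness of WB (FKG). Invasion on a single half-tree should be handled recursively, exploiting its self-similarity together with $\lambda > \lambda_l(\bbT^d)$: roughly, local survival at the root of a half-tree, once upgraded to \emph{permanent} infection on a positive-density subset of its children, produces invasions of the deeper isomorphic half-trees hanging off those children, and one iterates by a Galton--Watson comparison. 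The delicate issue is precisely that $\lambda > \lambda_l(\bbT^d)$ gives only local survival with positive probability rather than permanent infection with a quantitative lower bound, so one must upgrade one to the other uniformly. I anticipate that this upgrade is where the duality with the branching-coalescing random walk mentioned in the abstract enters the proof, translating permanent-infection statements for the primal into cleaner finite-extinction statements for the dual, which can then be iterated up the tree.
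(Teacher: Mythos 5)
Your easy direction is correct. For the hard direction, the ``reaching + stabilization'' decomposition does not actually simplify the problem: by the monotonicity~\eqref{monotonicity}, the claim that $\bbP(\Omega_c^{\eta}) \geq r$ for every $\eta \supseteq \eta_*$ is equivalent to $\bbP(\Omega_c^{\eta_*}) \geq r$, and exhibiting \emph{any} finite $\eta_*$ with $\bbP(\Omega_c^{\eta_*}) > 0$ already gives $\lambda \geq \lambda_c(\bbT^d)$ directly, without the reaching step. So your stabilization step \emph{is} the theorem, restated, and the reaching step is vacuous. Everything therefore hinges on your sketch for invasion of half-trees, which you honestly flag as incomplete, and this is where the genuine content must live.

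The paper's argument for that core step is quite different from your Galton--Watson-invasion-of-half-trees sketch, and the difference is worth understanding. Rather than trying to ``upgrade local survival to permanent infection'' directly on the primal side, the paper works almost entirely on the dual BCRW $\hat\xi_\cdot$. The two driving lemmas are (i) an analog of Zhang's lemma (Lemma~\ref{lem:Zhang}), which shows $\inf_t \bbP(0 \in \hat{\xi}^{0,\bbT_0^-}_t) > 0$ above $\lambda_l$ via a branching comparison on the half-tree (this part of the paper is in the spirit of your GW sketch, and uses $p > 1/\sqrt{d-1}$, i.e.\ that $\lambda_l \geq d/(2\sqrt{d-1})$), and (ii) a super-polynomial tail bound on the dual hitting time $\hat\tau_y^{0,\bbT}$ of a neighbor $y$ (Lemma~\ref{lem:hitting}), whose proof stacks exponential-growth estimates for the dual (Lemmas~\ref{lem:DistanceBound}--\ref{lem:Congestion}) with many independent ``attempts to reach $y$'' from disjoint sub-half-trees. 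The theorem then follows by a Borel--Cantelli argument over the time windows $[s^2,(s+1)^2)$: via the duality relation~\eqref{eqn:203}, $\bbP(\xi_t^{0,\bbT_0^+} \not\ni 0) \leq \bbP(\hat\tau_y^{0,\bbT} > t) \leq C t^{-3}$, so the origin is infected for all large $t$ in the $+$-boundary half-tree, and the $d$ half-trees are patched together at the end. The key technical idea your proposal is missing is precisely the quantitative, summable tail bound on $\hat\tau_y$ for the dual (a statement with no counterpart in your outline), which is what lets Borel--Cantelli convert a ``positive probability'' statement into an ``eventually always'' statement. You correctly anticipate that duality with the BCRW is essential, but the paper uses it not to ``translate permanent-infection statements'' abstractly, but via the very specific identity $\bbP(\xi_t^{A,\bbG^+}\cap B\neq\emptyset) = \bbP(\hat\xi_t^{B,\bbG^+}\cap A\neq\emptyset \text{ or } \hat\xi_\cdot \text{ exits } \bbG)$, so that ``dual hits the root's parent'' forces ``primal root is infected'' under $+$ boundary conditions. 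Without this bridge and the quantitative tail estimate, the invasion sketch has no engine to run on.
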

The proof of this theorem constitutes the main part of this paper. The theorem implies that the only possibility for local but not complete survival is when $\lambda = \lambda_l(\bbT) = \lambda_c(\bbT)$. We conjecture that this is not the case and that in fact at this $\lambda$ survival can only be global. This is the case in the contact process~\cite{zhang1996complete, lalley1998limit}. As an immediate corollary we get the following characterization of all possible weak limits of $\xi_\cdot$. In what follows, we naturally endow the space $\cX$ with the product topology and product $\sigma$-algebra.
\begin{cor}
\label{cor:CompelteConvergenceXi}
Let $d \geq 3$ and $\lambda > \lambda_l(\bbT^d)$. For all $\xi_0 \in \cX$ as $t \to \infty$,
\begin{equation}
\label{e:2}
  \bbP(\xi_t^{\xi_0} \in \cdot) \Rightarrow 
	\bbP(\Omega_g^{\xi_0}) \, \delta_{\bbT} \ + \ 
		\big(1 - \bbP(\Omega_g^{\xi_0}) \big) \delta_\emptyset \, .
\end{equation}
In particular $\delta_0$ and $\delta_{\bbT}$ are the only extremal invariant measures for the model above $\lambda_l(\bbT)$.
\end{cor}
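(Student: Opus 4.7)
The plan is to reduce weak convergence to the identity $\bbP(\Omega_c^{\xi_0})=\bbP(\Omega_g^{\xi_0})$ for every finite non-empty $\xi_0$, and to extract that identity from Theorem~\ref{thm:complete}. Since $\cX$ carries the product topology, the claimed weak convergence is equivalent to showing, for every finite $F\subset\bbT$, that $\bbP(F\subseteq \xi_t^{\xi_0})\to \bbP(\Omega_g^{\xi_0})$ and $\bbP(F\cap \xi_t^{\xi_0}=\emptyset)\to 1-\bbP(\Omega_g^{\xi_0})$. Given the identity, both limits follow from a dichotomy: on $\Omega_c^{\xi_0}$ every vertex is eventually infected forever, so $F\subseteq\xi_t$ for all large $t$; on $(\Omega_g^{\xi_0})^c$ (with $\xi_0$ finite) the process is eventually $\emptyset$, so $F\cap\xi_t=\emptyset$ for all large $t$; and $\Omega_g^{\xi_0}\setminus\Omega_c^{\xi_0}$ is null.

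The core step is therefore $\bbP(\Omega_c^{\xi_0}\mid\Omega_g^{\xi_0})=1$ for finite $\xi_0$, which I would establish by combining two ingredients. By \eqref{eqn:303} the size process $|\xi_t|$ observed at its jump times is a biased random walk on $\bbZ_+$ with positive drift $(\lambda-1)/(\lambda+1)$, so on $\Omega_g^{\xi_0}$ one has $|\xi_t|\to\infty$ almost surely. By Theorem~\ref{thm:complete} and the hypothesis $\lambda>\lambda_l$, there is a constant $p=p(\lambda)>0$ with $\bbP(\Omega_c^{\{x\}})\geq p$ for every $x\in\bbT$. Using the Harris graphical construction, I would pick stopping times $T_n\uparrow\infty$ with $|\xi_{T_n}|\geq n$ and exploit the branching structure of $\bbT^d$ to extract $k_n\to\infty$ infected vertices at time $T_n$ whose pairwise distances grow with $n$; complete-survival attempts seeded from each vertex, realized using only graphical data inside disjoint balls, are then conditionally independent and each succeeds with probability at least $p$ by attractiveness. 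A Borel--Cantelli argument then yields $\bbP(\Omega_c^{\xi_0}\mid\Omega_g^{\xi_0})=1$. In practice this stronger identity is typically obtained inside the proof of Theorem~\ref{thm:complete} itself, which is why the corollary is described as immediate.

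For a general (possibly infinite) $\xi_0\in\cX$, I would approximate by finite truncations $\xi_0\cap B_n$, with $B_n$ a ball of radius $n$ about a fixed root. The monotone graphical coupling gives $\xi_t^{\xi_0\cap B_n}\uparrow \xi_t^{\xi_0}$ and $\Omega_g^{\xi_0\cap B_n}\uparrow \Omega_g^{\xi_0}$, so the weak convergence established for finite initial conditions transfers to $\xi_0$ in the limit. For the final statement about extremal invariant measures, averaging the weak convergence identity against any invariant probability $\mu$ on $\cX$ and setting $q:=\int \bbP(\Omega_g^{\xi_0})\,d\mu(\xi_0)$ gives $\mu=q\,\delta_\bbT+(1-q)\,\delta_\emptyset$, so $\mu$ is a convex combination of $\delta_\emptyset$ and $\delta_\bbT$, and extremality leaves only these two. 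The main obstacle in the whole argument is the restart decoupling used to obtain $\bbP(\Omega_c\mid\Omega_g)=1$: on $\bbT^d$ the exponential volume growth makes it easy to find many mutually far-apart infected vertices once $|\xi_t|$ is large, but the honest independence of the attempts and the uniform lower bound $p$ on success probabilities require the full strength of Theorem~\ref{thm:complete}; everything else is comparatively soft.
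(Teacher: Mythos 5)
Your reduction of the weak convergence and the invariant-measure statement to the identity $\bbP(\Omega_c^{\xi_0})=\bbP(\Omega_g^{\xi_0})$ is the same as the paper's, and the truncation argument for infinite $\xi_0$ is fine. The gap is in your proof of the identity itself, specifically the claim that ``complete-survival attempts seeded from each vertex, realized using only graphical data inside disjoint balls, are then conditionally independent.'' Complete survival $\Omega_c^{\{x_i\}}$ is a tail event depending on the entire graphical representation on $\bbT\times[0,\infty)$; it cannot be measured from the data inside a bounded ball, so the attempts are \emph{not} honestly independent. And one cannot fall back on a correlation inequality: the events $\Omega_c^{\{x_i\}}$ are increasing in the graphical data, so Harris/FKG gives $\bbP\big(\cap_i (\Omega_c^{\{x_i\}})^{\rm c}\big)\geq \prod_i(1-p)$, i.e.\ positive correlation goes the wrong way for the Borel--Cantelli bound you want. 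To make your plan rigorous you would have to replace each attempt by a genuinely local ``seeding'' event in a disjoint space--time box, and then separately show that a successful seeding leads to $\Omega_c$ with probability close to $1$ -- but the latter is essentially the crux of the lemma you are trying to prove, so the argument as written is circular.

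The paper avoids this decoupling problem entirely. Its Lemma~\ref{lem:CompleteOnGlobal} proceeds in two short steps. First, writing $\cA^{\xi_0}(s,u)$ for the event that $0\in\xi_t^{\xi_0}$ for all $t\in[s,s+u]$ and $\alpha:=\lim_{r\to\infty}\bbP\big(\Omega_c^{\rmB_0(r)}\big)$ (the limit exists by monotonicity), it conditions on $\xi_{s+u}^{\xi_0}$ (which is finite, hence inside some ball) to get
\begin{equation}
  \bbP\big(\Omega_c^{\xi_0}\big)
   = \lim_{s\to\infty}\lim_{u\to\infty}
     \bbP\big(\cA^{\xi_0}(s,u)\big)\,
     \bbE\Big[\bbP\big(\Omega_c^{\xi_0}\,\big|\,\xi_{s+u}^{\xi_0}\big)\,\Big|\,\cA^{\xi_0}(s,u)\Big]
   \leq \bbP\big(\Omega_c^{\xi_0}\big)\,\alpha \,,
\end{equation}
which forces $\alpha=1$ once $\bbP(\Omega_c^{\xi_0})>0$ (Theorem~\ref{thm:complete}). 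Second, using local survival ($\lambda>\lambda_l$) it shows that on $\Omega_g^{\xi_0}$ the inclusion time $\tau_{\rmB_0(r)}^{\xi_0}$ is finite a.s.\ for every $r$, and then applies the strong Markov property at that time together with $\alpha=1$. This is a sequential conditioning argument, not a parallel independent-restart argument, and it sidesteps the need for any independence between full-survival attempts; that is the key idea you are missing. Your observation that $|\xi_t|\to\infty$ on $\Omega_g$ is correct but is not by itself enough, because it does not decouple the attempts.
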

\eqref{e:2} is an analog of the Complete Convergence Theorem for the contact process on $\bbT$, conjectured by Pemantle and first proved by Zhang~\cite{zhang1996complete} and reproved in a simpler way by Schonmann and Salzano~\cite{salzano1998new}. Here $\delta_\bbT$ plays the role of the upper invariant measure of the contact process (i.e. the limiting measure of the process when started from the all $+$ configuration). When $\lambda \in (1, \lambda_l(\bbT)]$ it is not clear whether aside from $\delta_\emptyset$ and $\delta_\bbT$ there are other extremal invariant measures. We conjecture that this is the case, as it is for the contact process~\cite{durrett1995intermediate} below the threshold for local survival and the case when $\lambda=1$~\cite{liggett1999stochastic}.

Another consequence of Theorem~\ref{thm:complete} is the process ``mostly'' fixates. More formally, for $\xi_0 \in \cX$ and $x \in \bbT$ define the {\em fixation} event as
\begin{equation}
	\Omega_f^{\xi_0} := \big\{ \sup \{t \geq 0 :\: 
				\xi^{\xi_0}_{t-}(x) \neq \xi^{\xi_0}_t(x) \} < \infty \big\} \,.
\end{equation}
Then,
\begin{cor}
\label{cor:Fixation}
Fix $d \geq 3$. 
\begin{enumerate}
	\item \label{item:CS1}
 		If $\lambda > \lambda_l(\bbT^d)$ then $\bbP \big(\Omega_f^{\xi_0} \big) = 1$,
 		for any $\xi_0 \in \cX$.
 	\item \label{item:CS2}
	 	If $1 \leq \lambda < \lambda_l (\bbT^d)$ then 
	  	$\bbP \big(\Omega_f^{\xi_0} \big) = 1$, for any finite $\xi_0 \in \cX$. 
\end{enumerate}
\end{cor}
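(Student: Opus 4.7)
The plan is to handle the two parts separately, with Part~(2) following directly from the definition of $\lambda_l$, and Part~(1) reduced via the Markov property, attractiveness, and the complete convergence of Corollary~\ref{cor:CompelteConvergenceXi} to a local stabilization estimate which I expect to be the main obstacle. For Part~(2), since $\lambda < \lambda_l(\bbT^d)$ and $\xi_0$ is finite, the definition of $\lambda_l$ gives $\bbP(\Omega_l^{\xi_0}) = 0$, so almost surely $T^\ast := \sup\{t \geq 0 : x \in \xi_t^{\xi_0}\} < \infty$ and $\xi_t(x) = -$ for all $t > T^\ast$. Since the flip rate at $x$ is bounded by $d(\lambda \vee 1)$, only finitely many flips occur in $[0, T^\ast]$, and $\bbP(\Omega_f^{\xi_0}) = 1$.

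For Part~(1), set $p := \bbP(\Omega_g^{\xi_0})$. On $(\Omega_g^{\xi_0})^c$ the process goes extinct and so $x$ fixates at $-$, contributing $1-p$ to $\bbP(\Omega_f^{\xi_0})$; it therefore suffices to prove $\bbP(\Omega_c^{\xi_0}) = p$ (note $\Omega_c \subset \Omega_g$ is automatic). The upper bound
\begin{equation*}
\bbP(\Omega_c^{\xi_0}) \ = \ \lim_{T \to \infty} \bbP\big(x \in \xi_t^{\xi_0} \ \forall t \geq T\big) \ \leq \ \lim_{T \to \infty} \bbP(x \in \xi_T^{\xi_0}) \ = \ p
\end{equation*}
is immediate from Corollary~\ref{cor:CompelteConvergenceXi}. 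For the matching lower bound, apply the Markov property at time $T$ combined with the standard attractive coupling: writing $B_r$ for the graph ball of radius $r$ around $x$ and $q_r := \bbP^{B_r}(x \in \xi_t \ \forall t \geq 0)$, monotonicity yields
\begin{equation*}
\bbP\big(x \in \xi_t^{\xi_0} \ \forall t \geq T\big) \ \geq \ q_r \cdot \bbP\big(\xi_T^{\xi_0} \supset B_r\big).
\end{equation*}
Corollary~\ref{cor:CompelteConvergenceXi} applied to the cylinder event $\{\eta \in \cX : \eta \supset B_r\}$ gives $\bbP(\xi_T^{\xi_0} \supset B_r) \to p$ as $T \to \infty$, so $\bbP(\Omega_c^{\xi_0}) \geq p\,q_r$ for every $r$.

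The heart of the argument, and the step I expect to be the hardest, is to show $q_r \to 1$ as $r \to \infty$. My strategy is to work in the Harris graphical construction: for $x$ to ever flip to $-$ when started from $B_r$, there must exist a backward chain of healing-arrow firings along some nearest-neighbor path $x = y_0, y_1, \ldots, y_r, y_{r+1}$ with $y_{r+1} \notin B_r$ and decreasing times $t_0 > t_1 > \ldots > t_r$, where $t_i$ is a firing of the healing arrow from $y_{i+1}$ to $y_i$ and $y_{i+1}$ is in state $-$ just before $t_i$. Above $\lambda_l$ the local ``$-$-cascade'' is subcritical: in a background of $+$'s the size of a $-$ cluster evolves, at its jump times, as a nearest-neighbor random walk on $\bbZ_+$ with drift $-(\lambda-1)/(\lambda+1) < 0$ (by the symmetric counterpart of~\eqref{eqn:303}). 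A branching-process / percolation estimate should then bound the probability of any such chain of length $r$, and summing over the $O((d-1)^r)$ candidate paths the total vanishes with $r$. The delicate point will be uniformity in $t$: naive Poisson counting controls any fixed finite horizon, but promoting the estimate to cover all of $[0, \infty)$ is where the quantitative subcriticality of the healing cascade above $\lambda_l$ must be genuinely exploited. Granted $q_r \to 1$, letting $r \to \infty$ yields $\bbP(\Omega_c^{\xi_0}) \geq p$, matching the upper bound, so $\bbP(\Omega_f^{\xi_0}) = 1$ as required.
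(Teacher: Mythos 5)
Your Part~(2) is correct and essentially matches the paper's one-line argument: below $\lambda_l$ a finite infection dies out or drifts away, so every vertex sees only finitely many flips.

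Part~(1) has the right high-level shape — reduce to $\bbP(\Omega_c^{\xi_0}) = \bbP(\Omega_g^{\xi_0})$, get the upper bound from the weak convergence in Corollary~\ref{cor:CompelteConvergenceXi}, and get the lower bound by waiting for $\xi_T^{\xi_0}$ to cover a large ball $\rmB_0(r)$ and then appealing to the Markov property. But the step you identify as the heart of the matter, namely that starting from $\rmB_0(r)$ the origin stays occupied (with probability $q_r\to 1$), is exactly the content of Lemma~\ref{lem:CompleteOnGlobal} in the paper (equation~\eqref{eqn:43}, which asserts $\bbP(\Omega_c^{\rmB_0(r)})\to 1$), and you have not proved it. Two remarks on your sketch. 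First, you do not need the stronger quantity $q_r$ (``never healed''); setting up the inequality as $\bbP(\Omega_c^{\xi_0}) \geq \bbP(\xi_T^{\xi_0}\supset \rmB_0(r))\,\bbP(\Omega_c^{\rmB_0(r)})$ is cleaner and only requires $\bbP(\Omega_c^{\rmB_0(r)})\to 1$. Second, the ``healing cascade'' mechanism you propose is suspect as an explanation of why $\lambda > \lambda_l$ is needed: the negative drift $-(\lambda-1)/(\lambda+1)$ of a $-$ cluster in a $+$ background holds for every $\lambda>1$, and a path-counting bound of the kind you describe — probability $\lesssim c^r$ per path times $(d-1)^r$ paths — would need $c(d-1)<1$, which for general $d$ may fail in the entire window $(\lambda_l, d-2)$. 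You flag the uniformity-in-$t$ issue yourself but do not resolve it, and it is not clear your route can be pushed through. The paper takes a fundamentally different and much shorter route: Lemma~\ref{lem:CompleteOnGlobal} bootstraps from $\bbP(\Omega_c^{\xi_0})>0$ (Theorem~\ref{thm:complete}) via the identity $\bbP(\Omega_c^{\xi_0}) \leq \bbP(\Omega_c^{\xi_0})\cdot\lim_r\bbP(\Omega_c^{\rmB_0(r)})$, obtained by conditioning at a late time at which the (finite) infected set sits inside some ball. Positivity of the left side then forces $\lim_r\bbP(\Omega_c^{\rmB_0(r)})=1$ with no cascade analysis whatsoever. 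Once that is available, the paper's proof of Corollary~\ref{cor:Fixation}~\ref{item:CS1} is a one-liner from the a.s.\ pointwise convergence~\eqref{eqn:401}.
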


Duality plays an important role in the analysis of particle systems (see, for example \cite{harris1976class, griffeath1979additive}). At the same time, the dual processes are often of interest by themselves. When $\lambda \geq 1$, a dual for the WB process, which was exploited time and again in the past, is a certain (continuous time) branching coalescing random walk (henceforth the BCRW process), which we now describe. Like $\xi_\cdot$, this process takes value in the space $\cX$ of all $\pm$ configurations on $\bbT$. However, this time we interpret a ``$+$ vertex'' as occupied by a particle, while a ``$-$ vertex'' as vacant. Starting from an initial configuration $\hat{\xi}_0$,  particles independently move to each of their neighbors at rate $1$ and give birth (branch) to a new particle at each of their neighbors at rate $\lambda-1$. If a vertex to which a particle moved or branched was already occupied by a particle, the two particles coalesce. We shall denote this process by $(\hat{\xi}_t^{\hat{\xi}_0, \bbT, \lambda} :\: t \geq 0)$. Formally, its generator is the closure of 
\begin{equation}
\label{eqn:10}
\begin{split}
	\hat{\cL} f (\hat{\xi}) = \sum_{x \sim y} & \Big(
	 	1_{\{\hat{\xi}(x) = +, \hat{\xi}(y) = -\}} \big( \big[ f(\hat{\xi}^{xy}) - f(\hat{\xi}) \big] + (\lambda - 1)
			 	\big[ f(\hat{\xi}^{y}) - f(\hat{\xi}) \big] \big)  \\
	& \quad + 1_{\{\hat{\xi}(x) = +, \hat{\xi}(y) = +\}} \big( \big[ f(\hat{\xi}^{x}) - f(\hat{\xi}) \big] + 
			 	\big[ f(\hat{\xi}^{y}) - f(\hat{\xi}) \big] \big) \Big) \,.
\end{split}
\end{equation}
As before $\hat{\xi}^x$ is $\hat{\xi}$ with the sign at $x$ flipped, while $\hat{\xi}^{xy}$ is $\hat{\xi}$ with the sign flipped both at $x$ and at $y$. 

There are two known duality relations between $\xi_\cdot$ and $\hat{\xi}_t$. The first one which is more standard, can be read immediately from the graphical representation of the model. The second was discovered by Sudbury and Lloyd~\cite{sudbury1997quantum} and involves {\em $p$-thinning} of configurations, whereby each $+$ vertex becomes a $-$ vertex with probability $1-p$ and kept $+$ with probability $p$, independently of other vertices. We shall make use of both of these relations in the proofs, but they are not needed in order to state the results concerning $\hat{\xi}_\cdot$ and therefore we shall defer their precise formulation to subsection~\ref{sub:duality}. 

For $p \in [0,1]$, let $\nu_p$ denote the Bernoulli($p$)-product measure on $\cX$. Using any of the two duality relations, the previous results on $\xi_\cdot$ immediately give,
\begin{thm}
\label{thm:CC_BCRW}
Let $d \geq 3$. If $\lambda \in [1, \lambda_l(\bbT^d))$ then for any finite $\hat{\xi}_0 \in \cX$
as $t \to \infty$,
\begin{equation}
  \label{e:4}
  \bbP(\hat{\xi}_t^{\hat{\xi}_0} \in \cdot) \Rightarrow \delta_\emptyset \,.
\end{equation}
If  $\lambda > \lambda_l(\bbT^d)$ then for any $\hat{\xi}_0 \in \cX$ as $t \to \infty$,
\begin{equation}
  \label{e:3}
  \bbP(\hat{\xi}_t^{\hat{\xi}_0} \in \cdot) \Rightarrow 
  		1_{\{\hat{\xi}_0 \neq \emptyset\}} \; \nu_{1-1/\lambda} 
  		+ 1_{\{\hat{\xi}_0 = \emptyset\}} \; \delta_\emptyset \, .
\end{equation}
In particular the only extremal invariant measures for BCRW above $\lambda_l(\bbT)$ are $\delta_\emptyset$ and $\nu_{1-1/\lambda}$. 
\end{thm}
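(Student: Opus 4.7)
The plan is to deduce both halves of Theorem~\ref{thm:CC_BCRW} from the complete convergence Corollary~\ref{cor:CompelteConvergenceXi} for the WB process, using only the first (graphical) duality relation between $\xi_\cdot$ and $\hat\xi_\cdot$, which will take the schematic form
\begin{equation*}
  \bbP\big(\hat\xi_t^{A} \cap B \neq \emptyset\big) \;=\; \bbP\big(\xi_t^{B} \cap A \neq \emptyset\big), \qquad A \text{ or } B \text{ finite}.
\end{equation*}
Since $\cX$ carries the product topology, weak convergence $\bbP(\hat\xi_t^{\hat\xi_0} \in \cdot) \Rightarrow \mu$ is equivalent to convergence of all finite-dimensional marginals, which by a standard inclusion--exclusion identity is in turn equivalent to convergence of the ``empty intersection'' probabilities $\bbP(\hat\xi_t^{\hat\xi_0} \cap T = \emptyset) \to \mu(\eta \cap T = \emptyset)$ for every finite $T \subset \bbT$. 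The Sudbury--Lloyd duality will not be needed here.

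For the subcritical regime I would fix finite $\hat\xi_0$ and finite $T$ and read the duality as $\bbP(\hat\xi_t^{\hat\xi_0} \cap T \neq \emptyset) = \bbP(\xi_t^T \cap \hat\xi_0 \neq \emptyset)$. By the very definition of $\lambda_l$, for $\lambda \in [1, \lambda_l(\bbT^d))$ one has $\bbP(\Omega_l^T) = 0$, hence $\bbP(x \in \xi_t^T) \to 0$ for every vertex $x$; a union bound over the finitely many $x \in \hat\xi_0$ then yields \eqref{e:4}.

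For the supercritical regime, take arbitrary $\hat\xi_0 \in \cX$ (possibly infinite) and finite $T$, and use the complementary form
\begin{equation*}
  \bbP\big(\hat\xi_t^{\hat\xi_0} \cap T = \emptyset\big) \;=\; \bbP\big(\xi_t^T \cap \hat\xi_0 = \emptyset\big),
\end{equation*}
in which the finite $T$ supplies the required finite argument. Corollary~\ref{cor:CompelteConvergenceXi} gives $\xi_t^T \Rightarrow \bbP(\Omega_g^T)\delta_\bbT + (1 - \bbP(\Omega_g^T))\delta_\emptyset$, while \eqref{eqn:2} identifies $\bbP(\Omega_g^T) = 1 - \lambda^{-|T|}$. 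Splitting over $\Omega_g^T$ and its complement, the contribution of $(\Omega_g^T)^c$ tends to $\lambda^{-|T|}$ (since $\xi_\cdot^T$ is eventually empty there), whereas on $\Omega_g^T$ the event $\{\xi_t^T \cap \hat\xi_0 = \emptyset\}$ is contained in $\{y \notin \xi_t^T\}$ for any fixed $y \in \hat\xi_0 \neq \emptyset$, and $\bbP(y \notin \xi_t^T,\, \Omega_g^T) \to 0$ because the cylinder probability $\bbP(y \in \xi_t^T)$ already converges to $\bbP(\Omega_g^T)$ by weak convergence. Hence $\bbP(\hat\xi_t^{\hat\xi_0} \cap T = \emptyset) \to \lambda^{-|T|} = \nu_{1-1/\lambda}(\eta \cap T = \emptyset)$, which gives \eqref{e:3}; the case $\hat\xi_0 = \emptyset$ is trivial. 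The extremal invariant measure statement then follows by integrating the limiting relation against any invariant $\mu$ and applying bounded convergence to conclude that $\mu$ is a convex combination of $\delta_\emptyset$ and $\nu_{1-1/\lambda}$.

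No new probabilistic input is introduced by this argument: the hard content is packed entirely into Theorem~\ref{thm:complete} and its corollary for $\xi_\cdot$. The only point requiring genuine attention is to invoke the duality identity with the finite argument on the correct side in each regime ($\hat\xi_0$ finite in the subcritical case, $T$ finite in the supercritical one); beyond this bookkeeping and the routine inclusion--exclusion reduction to empty-intersection events, I do not anticipate any obstacle.
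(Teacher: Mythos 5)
Your proposal is correct and follows essentially the same route as the paper: both deduce the theorem from Corollary~\ref{cor:CompelteConvergenceXi} via the graphical duality relation~\eqref{eqn:201} together with~\eqref{eqn:2}, with the finite set on the appropriate side of the identity in each regime. You merely make explicit what the paper leaves compressed — namely, that on $(\Omega_g^T)^{\mathrm c}$ the finite process is eventually empty and on $\Omega_g^T$ it eventually contains any fixed vertex, which is the content of the almost-sure limit~\eqref{eqn:401} underlying the paper's one-line claim that $\bbP(\xi_t^A \cap \hat\xi_0 \neq \emptyset) \to \bbP(\Omega_g^A)$.
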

The cases of $\lambda = \lambda_l(\bbT^d)$ and any initial configuration $\hat{\xi}_0$, and $\lambda \in (1,\lambda_l(\bbT^d))$ and infinite $\hat{\xi}_0$ -- remain open, as they do for $\xi_\cdot$.

As mentioned, when $\lambda \in (1, \lambda_l(\bbT^d))$ and the initial configuration is chosen  according to a distribution which puts mass on infinite configurations, then it is an open problem to characterize the set of possible weak limits for both $\xi_\cdot$ and $\hat{\xi}_\cdot$. Nevertheless, if the initial configuration is invariant or even ergodic, with respect to the group of automorphisms of $\bbT^d$, then such a characterization is possible. 

More precisely, denote by $\cI$ the set of probability measures on $\cX$ which are invariant under all automorphisms of $\bbT^d$. The subset of $\cI$ of all measures which are in addition ergodic will be denoted by $\cE$. For a configuration $\xi$, a ({\em connected}) {\em component} is a maximal subset of vertices $U$ of $\bbT$, for which the induced sub-graph is connected and such that all vertices in $U$ have the same sign under $\xi$. We shall call a component infected, if its vertices are infected under $\xi$. Then we have the following:
\begin{thm}
\label{thm:Automorphism}
Let $d \geq 3$ and $\lambda > 1$.
\begin{enumerate}
\item \label{part:AU1}
	If $\bbP(\xi_0 \in \cdot) \in \cI$ then 
	\begin{equation}
		  \bbP \big( \Omega^{\xi_0}_f \big) = 1 \,.
	\end{equation}
	In particular, any automorphism-invariant stationary distribution for $\xi_\cdot$ is a convex combination of $\delta_\emptyset$ and $\delta_{\bbT}$.
\item \label{part:AU2}
	If $\bbP(\xi_0 \in \cdot) \in \cE \setminus \{\delta_\emptyset\}$ then 
	\begin{equation}
		\label{eqn:516}
		  \bbP \big( \Omega^{\xi_0}_c \big) = 1 \,.
	\end{equation}
	In this case, infinite infected components are formed in finite time $\bbP$-almost surely.
	In particular, the only automorphism-ergodic stationary distributions for $\xi_\cdot$ are  $\delta_\emptyset$ and $\delta_{\bbT}$.
\end{enumerate}
\end{thm}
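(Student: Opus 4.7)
The plan is to derive both parts from a single transition-rate identity driven by automorphism-invariance, together with an ergodic $0$--$1$ law for Part~\ref{part:AU2}. For Part~\ref{part:AU1}, fix $x \in \bbT$ with a neighbor $y$ and apply the generator~\eqref{eqn:1} to the indicator $\xi \mapsto 1_{\{\xi(x) = +\}}$. Using that the law of $\xi_t$ is automorphism-invariant (so in particular $\bbP(\xi_t(x)=+,\xi_t(y)=-)=\bbP(\xi_t(x)=-,\xi_t(y)=+)$ via the edge-swapping automorphism of $\bbT^d$), I would derive
\[
\frac{d}{dt}\bbP\big(\xi_t(x)=+\big) = (\lambda-1)\,d\,\bbP\big(\xi_t(x)=+,\xi_t(y)=-\big),
\]
while a matching computation shows that the expected instantaneous flip-rate at $x$ equals $R_t := (\lambda+1)\,d\,\bbP(\xi_t(x)=+,\xi_t(y)=-)$. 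Since $\bbP(\xi_t(x)=+) \leq 1$ is nondecreasing, integrating gives $\int_0^\infty R_t\,dt \leq (\lambda+1)/(\lambda-1) < \infty$, so by the compensation formula the expected number of flips at $x$ is finite, hence $x$ flips only finitely often a.s. Taking a countable union over $x \in \bbT$ gives $\bbP(\Omega_f^{\xi_0})=1$. For the invariant-measure corollary, if $\mu$ is additionally dynamically stationary then the displayed derivative vanishes, forcing the one-edge discord probability to be zero; automorphism-invariance gives the opposite pair as well, and connectivity of $\bbT$ concentrates $\mu$ on $\{\emptyset,\bbT\}$.

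For Part~\ref{part:AU2}, let $\mu \in \cE \setminus \{\delta_\emptyset\}$ so that ergodicity yields the single-site density $p := \mu(\xi(0)=+) > 0$. By Part~\ref{part:AU1}, $\xi_\infty(x) := \lim_{t \to \infty} \xi_t(x)$ exists almost surely for each $x$, and monotonicity of $t \mapsto \bbP(\xi_t(x)=+)$ gives $\bbP(\xi_\infty(x)=+) \geq p$. Fixation allows bounded convergence to identify $\lim_t \bbP(\xi_t(x)=+,\xi_t(y)=-) = \bbP(\xi_\infty(x)=+,\xi_\infty(y)=-)$, and the finite time integral then forces this limit to be $0$ along every edge; automorphism-invariance supplies the reverse pair, and connectivity of $\bbT$ implies that $\xi_\infty$ is almost surely one of the two constant configurations. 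To determine which, I would present $\xi_\infty$ as an automorphism-equivariant measurable functional of the pair $(\xi_0, \Pi)$, where $\Pi$ is the standard independent Poisson graphical representation — a product of an ergodic initial law and a mixing (hence ergodic) clock system under the diagonal automorphism action, and thus ergodic. The law of $\xi_\infty$ is therefore automorphism-ergodic, and an ergodic probability supported on the two-point set $\{\emptyset, \bbT\}$ must be $\delta_\emptyset$ or $\delta_\bbT$; the bound $\bbP(\xi_\infty=\bbT) \geq p > 0$ rules out the former. Thus $\xi_\infty = \bbT$ almost surely, which is precisely~\eqref{eqn:516}; specializing to $\mu$ that is additionally dynamically stationary gives the last sentence of the theorem.

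The hardest piece will be the finite-time formation of infinite infected components, since the preceding argument only delivers pointwise convergence $\xi_t(x) \to +$ for each $x$. I would argue via a ``blocked'' percolation estimate: as $t \to \infty$, automorphism-invariance and the preceding step give $\bbP(B_r(0) \subseteq \xi_t) \to 1$ for every $r$, so the automorphism-ergodic thickened set $\eta_t := \{x : B_r(x) \subseteq \xi_t\}$ has density tending to $1$, and any two adjacent vertices of $\eta_t$ are automatically joined in $\xi_t$ by an all-$+$ connected neighborhood. The main obstacle is that on $\bbT^d$ ergodicity plus high density does not by itself force an invariant random subset to percolate, so one would need to combine this with a contour or block-dynamics comparison exploiting the positive correlations from the graphical construction (or a supercritical renormalization on a suitably coarse scale) in order to extract an honest infinite cluster of $\eta_t$ at some finite $t$. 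Once the probability of $\{\xi_t \text{ contains an infinite } + \text{ cluster}\}$ is shown to be positive for some $t$, ergodicity of $\xi_t$ promotes it to $1$, completing the claim.
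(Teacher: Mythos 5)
Your Part~\ref{part:AU1} argument is correct and takes a somewhat different route from the paper's. You apply the generator to the single-site indicator, use the edge-swap automorphism of $\bbT^d$ to identify $\bbP(\xi_t(x)=+,\xi_t(y)=-)=\bbP(\xi_t(x)=-,\xi_t(y)=+)$, and obtain both $\frac{d\rho_t}{dt}=d(\lambda-1)\delta_t$ and the expected flip-rate $R_t=(\lambda+1)d\delta_t$; integrating gives $\int_0^\infty R_t\,dt\leq(\lambda+1)/(\lambda-1)$ and hence finitely many flips a.s. The paper instead counts the signed transitions $e^\pm_t(u,v)$, uses the net-change bound $\sum_u e^+_t(u,v)-\sum_u e^-_t(u,v)\in\{-1,0,1\}$ together with the rate-balance $E^+_t(u,v)=\lambda E^-_t(v,u)$ to deduce $E^+_t\leq\frac{\lambda}{d(\lambda-1)}$ directly, without writing down the ODE. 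Both arrive at the same numerical bound; yours is cleaner if one already trusts the generator/forward-equation manipulations, while the paper's only uses elementary rate comparison on a fixed edge. Your derivation of $\bbP(\Omega_c^{\xi_0})=1$ in Part~\ref{part:AU2} (fixation, monotonicity of $\rho_t$, a.s.\ constancy of $\xi_\infty$ by ergodicity, positivity of $\rho_0$) is essentially the paper's.

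The genuine gap is exactly where you flag it: the finite-time formation of infinite infected components. Your sketch does not close it, and the strategy you gesture at (contours, renormalization) is neither carried out nor needed. The paper's argument here is different and self-contained. Condition on the event that at time $t$ all infected components are finite; apply the mass-transport principle, sending unit mass from each infected vertex equally to the internal boundary of its component. The expected mass sent and received at a fixed vertex both equal $\rho_t$, and by non-amenability of $\bbT^d$ (isoperimetry $|U|/|\partial^-_\bbT U|\leq C'$ for finite $U$) a vertex can receive at most $C'$, so the probability $\beta_t$ of lying on the interior boundary of an infected component satisfies $\beta_t\geq\rho_t/C'$, whence $\delta_t\geq\beta_t/d\geq C\rho_t$. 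Feeding this into $\frac{d\rho_t}{dt}=d(\lambda-1)\delta_t$ (which the paper derives via mass transport, and which you derived via the edge-swap automorphism, equivalently on $\bbT^d$) gives $\rho_t\geq\rho_0 e^{C''t}$, contradicting $\rho_t\leq1$. Thus at some finite $t$ the hypothesis fails with positive probability, and ergodicity of the fixed-time configuration $\xi_t$ upgrades positive probability to probability one.

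On your stated worry — that on $\bbT^d$ high density alone does not force an automorphism-invariant ergodic random subset to percolate — this is actually false on non-amenable graphs: a sufficiently high marginal density does force infinite clusters, by essentially the same mass-transport/isoperimetry estimate the paper uses (a BLPS-type bound on the density of an invariant percolation with only finite clusters). So a route through ``$\rho_t\to1$ plus high-density percolation'' could be completed, but you would still need to supply that percolation input; the paper's differential inequality is sharper and avoids the detour.
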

A similar theorem can be derived for $\hat{\xi}$. It should be noted that the proof of Theorem~\ref{thm:Automorphism} applies to a much larger class of vertex transitive graphs.

\subsection{Outline of the paper}
The remainder of the paper is organized as follows. In section~\ref{sec:Perliminaries} we recall some known facts about the WB and BCRW processes as well as introduce most of the notation which will be used later in the proofs. In section~\ref{sec:MainProofs} we prove Theorems~\ref{thm:complete} and~\ref{thm:CC_BCRW} as well as Corollaries~\ref{cor:CompelteConvergenceXi} and~\ref{cor:Fixation}. Section~\ref{sec:ProofOfBounds} includes the proof of Proposition~\ref{prop:Bounds} and finally section~\ref{sec:ProofOfAuto} is devoted to the proof of Theorem~\ref{thm:Automorphism}.

\section{Preliminaries and Notation}
\label{sec:Perliminaries}
In this section we setup some additional notation which will often be used in the sequel as well as collect some well known facts about the process and its dual. Any future use of these facts will be accompanied by a proper reference to this section. Consequently, the reader who is familiar with the model can skim through this section quickly or skip it altogether, without much risk of getting lost later on.

\subsection{Graphs}
We will mostly be concerned with the $d$-regular tree $\bbT$, although occasionally we shall use other graphs $\bbG = (V,E)$. We shall identify sub-graphs with their corresponding edge-set and vertex-set. For example, for a set of vertices $U$ we may write $U \subseteq \bbT$ to indicate that $U$ is a subset of the vertex set of $\bbT$ and in this case treat $U$ also as the sub-graph of $\bbT$ induced by the vertices in $U$. We shall distinguish one vertex of $\bbT$ to be called the origin and denoted $0$. 

Although in the definition of $\xi_\cdot$ the underlying graph need not be directed,  it will be convenient to think of the edges of $\bbT$ as oriented such that each vertex will have exactly one predecessor, its {\em parent} and $d-1$ successors -- its {\em children} (formally, we fix an end of $\bbT$ and define the parent of $x$ as the first vertex after $x$ on the ray from $x$ which belongs to this end). For a vertex $x \in \bbT$, we let $\bbT_x$ denote the subtree, in the above orientation, rooted at $x$. 

The graph distance will be denoted by $\rho$. For $x \in \bbG$ and $r > 0$ we denote by $\rmB_x(r)$ the closed ball of radius $r$ around $x$ in this metric, namely $\rmB_x(r) := \{y \in \bbG :\: \rho(x,y) \leq r\}$ and set $\rmS_x(r) := \rmB_x(r) \setminus \rmB_x(r-1)$. Given a subset of vertices $U \subseteq \bbG$, we denote by $\partial_{\bbG} U$ the set of edges in $\bbG$ with exactly one endpoint in $U$.

\subsection{WB and BCRW on general underlying graphs and boundary conditions}
\label{sub:OnGeneralGraphs}
The definition of $\xi_\cdot$ in~\eqref{eqn:1} and $\hat{\xi}_\cdot$ in~\eqref{eqn:10} can, of course, be extended to any underlying graph $\bbG = (V,E)$ with a bounded degree (as mentioned, this can be relaxed). In this case the state space is $\cX^{\bbG} := \{+, -\}^{\bbG}$ and the Williams-Bjerknes process for such graph, initial configuration $\xi_0 \in \cX^{\bbG}$ and infection parameter $\lambda \geq 1$ will be denoted by $\big(\xi_t^{\xi_0, \bbG, \lambda} :\: t\geq 0 \big)$. Similarly, the corresponding branching coalescing random walk will be denoted by $\big(\hat{\xi}_t^{\xi_0, \bbG, \lambda} :\: t\geq 0 \big)$. As mentioned in the introduction, we shall often omit some or all of the superscripts.

The {\em inclusion time} of a subset of vertices $U \subseteq \bbG$ will be used often. For $\xi_\cdot$ it is defined as 
\begin{equation}
	\tau_U := \inf \{ t \geq 0 :\: \xi_t \supseteq U \} \,.
\end{equation}
Similarly $\hat{\tau}_U$ will denote the inclusion time of $U$ by $\hat{\xi}_\cdot$.

We shall often treat several instances of $\xi_\cdot$ and $\hat{\xi}_\cdot$ corresponding to different $(\xi_0, \bbG, \lambda)$ at the same time. In this case, it will be useful to decorate all events and random variables pertaining to a certain instance with the same superscripts and accents used to denote the process itself. For example, we may write $\Omega_g^{\xi_0, \bbG, \lambda}$ for the event of global survival for $\xi_\cdot^{\xi_0, \bbG, \lambda}$ or $\hat{\tau}_x^{\xi_0, \bbG}$ for the inclusion time of $x$ by $\hat{\xi}^{\xi_0, \bbG}_\cdot$.

If $\bbG$ is a sub-graph of a larger graph $\bbG'$, we may often want to put {\em boundary conditions} on the vertices of $\bbG' \setminus \bbG$. Given $\zeta \in \cX^{\bbG' \setminus \bbG} = \{-,+\}^{\bbG' \setminus \bbG}$, the process $\xi_\cdot$ with boundary conditions $\zeta$ evolves as before, only that the sign of vertices in $\bbG' \setminus \bbG$ remain fixed according to $\zeta$. Thus vertices in $\bbG' \setminus \bbG$ cannot be infected nor healed, but they continue to infect or heal their neighboring vertices in $\bbG$ at the usual rates. 

It will be convenient to suppose that sub-graphs can possibly ``come'' with boundary conditions and we shall write $\bbG^\zeta$ to mean that $\bbG$ ''comes'' with boundary conditions $\zeta$ on $\bbG' \setminus \bbG$. Writing just $\bbG$ means that there are no boundary conditions associated with $\bbG$. In practice, we shall only use either the {\em $+$ boundary conditions}, by which we mean that $\zeta = \delta_{\bbG' \setminus \bbG}$ or the {\em $-$ boundary conditions}, by which we mean that $\zeta = \delta_\emptyset$. In these cases we shall write either $\bbG^+$ or $\bbG^-$. Furthermore, if $\bbG'$ is not specified it will be assumed to be $\bbT$.
For example, $\xi_\cdot^{\xi_0, \bbT^+_0, \lambda}$ is the WB process on $\bbT_0$ with $+$ boundary conditions on $\bbT \setminus \bbT_0$.

Boundary conditions will also be used for $\hat{\xi}$, although here we need to clarify what they mean exactly. Given $\zeta \in \cX^{\bbG' \setminus \bbG}$ as before, $\hat{\xi}_\cdot$ on $\bbG$ with boundary conditions $\zeta$ evolves as $\hat{\xi}_\cdot$ does, only that particles which reach a $-$ vertex in $\bbG' \setminus \bbG$ disappear, while particles which reach a $+$ vertex in $\bbG' \setminus \bbG$ stay there forever. No particles are initially placed in any of the vertices of $\bbG' \setminus \bbG$. We shall see in subsection~\ref{sub:duality} why this definition is useful.

\subsection{The graphical representation}
\label{sub:Graphical}
The use of a graphical representation for describing the evolution of particle systems, originally due to Harris~\cite{harris1978additive}, is now a standard tool in their analysis. A more detailed account of this construction can be found in~\cite{griffeath1979additive}. Let a graph $\bbG = (V,E)$ and an infection parameter $\lambda \geq 1$ be given. Consider the set $D_{\bbG} = V \times \bbR_+$ which we think of as embedded in the plane as a disjoint collection of vertical rays, one for each vertex in $V$, starting at some point on the $x$-axis and going upwards. An element $(v,t)$ of $D_{\bbG}$ where $v \in V$ and $t \geq 0$ is therefore identified with the point on the ray corresponding to $v$ at height $t$ above the $x$-axis. We think of the second coordinate $t$ as time.

With each ordered pair of neighboring vertices $u \sim v$ in $\bbG$, we associate two Poisson point processes on $\bbR_+$: $\cN_{u,v}^\bullet$, $\cN_{u,v}^\circ$. The former has intensity measure $(\lambda-1) dt$ and the latter $1 dt$. Now fix a realization of all these processes. For each point $t$ in $\cN_{u,v}^\bullet$ we add to $D_{\bbG}$ a horizontal segment between $(u,t)$ and $(v,t)$, which we think of as oriented from $(u,t)$ to $(v,t)$. Similarly, for each point $s$ in $\cN_{u,v}^\circ$ we add a horizontal segment between $(u,s)$ and $(v,s)$, which we think of as oriented from $(u,s)$ to $(v,s)$, but just below $(v,s)$ we make a hole in the ray corresponding to $v$. The set $D_{\bbG}$ along with all oriented segments and holes will be denoted $D_{\bbG, \lambda}$. This is, of course, a random subset of $\bbR \times \bbR_+$.

Given a realization of $D_{\bbG, \lambda}$, a path from $(u,s)$ to $(v,t)$, where 
$0 \leq s \leq t$ and $u,v \in U$, is a self-avoiding curve from $(u,s)$ to $(v,t)$ which is also a subset of $D_{\bbG, \lambda}$ and adheres to the orientation of all rays and segments. In other words, it can only go upwards on a ray and in the direction of the segment on a segment and cannot pass through holes. If $\gamma$ is such a path we shall write $\gamma: (u,s) \lra{D_{\bbG, \lambda}} (v,t)$. 

The following easy to see relation explains the connection between $\xi_\cdot$ and this graphical representation. Recall that for a subset of vertices $A \subseteq \bbG$ the WB process on $\bbG$ with infection parameter $\lambda$ and initial configuration $\xi_0 = A$ is denoted by $\xi_\cdot^{A, \bbG, \lambda}$. Then, for any $A, B \subseteq \bbG$,
\begin{equation}
\label{eqn:15}
	\bbP(\xi_t^{A, \bbG, \lambda} \cap B \neq \emptyset) = 
		\bbP \big( \exists \gamma:(u,0) \lra{D_{\bbG, \lambda}} (v,t) \ \text{ such that } u \in A, \, v \in B \big) \,.
\end{equation}
In other words, if we set 
\begin{equation}
\label{eqn:16}
	\xi_t^{A, \bbG, \lambda} := \{v \in \bbG :\: \exists \gamma: (u, 0)  \lra{D_{\bbG, \lambda}} (v,t) \ \text{ such that }
		u \in A\} \,,
\end{equation}
then $(\xi_t^{A, \bbG, \lambda} :\: t \geq 0)$ is the Williams-Bjerknes process for $A, \bbG, \lambda$.

If $\bbG\subset\bbG'$ has boundary conditions $\zeta\in \cX^{\bbG'\setminus\bbG}$, then with the ordered neighbors $u\sim v$ with $u\in \bbG'\setminus\bbG$, $v\in\bbG$ we also associate the point processes $\cN^\bullet_{u,v},\cN^\circ_{u,v}$. Using the same construction as above we define the set $D_{\bbG',\lambda}$, and now
\begin{equation}
\label{eqn:161}
	\xi_t^{A, \bbG^\zeta, \lambda} := \{v \in \bbG :\: \exists \gamma: (u, 0)  \lra{D_{\bbG', \lambda}} (v,t) \ \text{ such that }
		u \in A\cup\zeta\} \,,
\end{equation}
is the Williams-Bjerknes process with boundary conditions $\zeta$. 
The usefulness of this graphical representation will become apparent in the next subsections.
\subsection{Coupling}
The graphical representation gives rise to a natural coupling between instances of $\xi^{A, \bbG, \lambda}_\cdot$ for different initial configurations $A$, underlying sub-graphs $\bbG \subseteq \bbG'$ and infection parameters $\lambda \geq 1$. This is because there is a natural way to couple $D_{\bbG, \lambda}$ for different $\bbG$'s and $\lambda$'s and in light of~\eqref{eqn:16}. From this, for example, one can immediately get the following {\em monotonicity} (or {\em attractiveness}) property. If $1 \leq \lambda \leq \lambda'$ and $A \subseteq A' \subseteq \bbG$ then under the above coupling
\begin{equation}\label{monotonicity}
	\xi_t^{A, \bbG, \lambda} \ \leq \ 
		\xi_t^{A', \bbG, \lambda'} \quad \mbox{for all }t\geq 0,
\end{equation}
where the comparison is by the standard partial ordering on $\{-,+\}^{\bbG}$. This also extends to the case of graphs with boundary conditions in an obvious way. The monotonicity property will be used so frequently in the proofs to follow, that we shall often not explicitly state it.

\subsection{Duality}
\label{sub:duality}
If instead of~\eqref{eqn:16} we set
\begin{equation}
\label{eqn:18}
	\hat{\xi}_t^{B, \bbG, \lambda} := \{u \in \bbG :\: \exists \gamma: (u, 0) \lra{D_{\bbG, \lambda}} (v,t) \ \text{ such that } v \in B\} \,,
\end{equation}
then~\eqref{eqn:15} can be rewritten as 
\begin{equation}
\label{eqn:19}
	\bbP(\hat{\xi}_t^{B, \bbG, \lambda} \cap A \neq \emptyset) = 
		\bbP \big( \exists \gamma:(u,0) \lra{D_{\bbG, \lambda}} (v,t) \ \text{ such that } u \in A, \, v \in B \big) \,.
\end{equation}
and therefore
\begin{equation}
\label{eqn:201}
	\bbP(\xi_t^{A, \bbG, \lambda} \cap B \neq \emptyset) \, = \,
		\bbP(\hat{\xi}_t^{B, \bbG, \lambda} \cap A \neq \emptyset) 
		\quad \text{ for any } A,B \subseteq \bbG \,.
\end{equation}
Since the distribution of $\cN_{u,v}^{\bullet}$ and $\cN_{u,v}^{\circ}$ is invariant under time reversal, reading $D_{\bbG, \lambda}$ from time $t$ down to time $0$ (formally applying the transformation $(u,s) \mapsto (u, t-s)$ to $D_{\bbG, \lambda}$), we see that $(\hat{\xi}_t^{B, \bbG, \lambda} :\: t \geq 0)$ is distributed as the (continuous-time) branching coalescing random walk whose generator was described in~\eqref{eqn:10}, with underlying graph $\bbG$, initial configuration $B$ and parameter $\lambda$. Thus~\eqref{eqn:201} gives one duality relation between $\xi_\cdot$ and $\hat{\xi}_\cdot$.

In the presence of boundary conditions $\zeta$ on $\bbG' \setminus \bbG$ where $\bbG' \supseteq \bbG$, we can set for $B \subseteq \bbG$
\begin{equation}
\label{eqn:181}
	\hat{\xi}_t^{B, \bbG^\zeta, \lambda} := \{u \in \bbG\cup\zeta :\: \exists \gamma: (u, 0) \lra{D_{\bbG', \lambda}} (v,t) \ \text{ such that } v \in B\}  \,.
\end{equation}
which yields a process whose distribution is that of the BCRW in the presence of boundary conditions, as described in the end of subsection~\ref{sub:OnGeneralGraphs}. 
In this case relation \eqref{eqn:201} becomes
\begin{equation}
\label{eqn:202}
	\bbP(\xi_t^{A, \bbG^\zeta, \lambda} \cap B \neq \emptyset) \, = \,
		\bbP(\hat{\xi}_t^{B, \bbG^\zeta, \lambda} \cap (A \cup \zeta) \neq \emptyset) 
		\quad \text{ for any } A,B \subseteq \bbG \,.
\end{equation}
In particular for $-$ boundary conditions~\eqref{eqn:201} is still valid (with $\bbG^-$ replacing $\bbG$), while for $+$ boundary conditions, we can rewrite~\eqref{eqn:202} as 
\begin{equation}
\label{eqn:203}
	\bbP(\xi_t^{A, \bbG^+, \lambda} \cap B \neq \emptyset) \, = \,
		\bbP \big( \hat{\xi}_t^{B, \bbG^+, \lambda} \cap A  \neq \emptyset
			\ \text{ or } \ \exists s \leq t :\: \hat{\xi}^{B, \bbG^+, \lambda}_s \nsubseteq \bbG \big)
		\quad \text{ for } A,B \subseteq \bbG \,.
\end{equation}

\medskip 
To describe the second duality relation between $\xi_\cdot$ and its dual, we have to define the notion of {\itshape thinning}. Fix $p \in [0,1]$. For a configuration $\xi \in \cX$ we define the $p$-thinning $\xi^{(p)}$ of $\xi$ as the random configuration obtained from $\xi$ by independently flipping the sign of every $+$ vertex with probability $1-p$ and retaining it with probability $p$. 

The following remarkable relation is due to Sudbury and Lloyd ~\cite[Theorem 13]{sudbury1997quantum}. For any $\lambda \geq 1$,
\begin{equation}
\label{eqn:221}
	\hat{\xi}^{( \xi_0^{(p)})}_t \ \eqt{d} \ 
		\big( \xi^{\xi_0}_t \big)^{(p)}  \quad \text{where } p = 1 - \lambda^{-1} \,.
\end{equation}
Note that $p = \bbP(\Omega_g^0)$ by ~\eqref{eqn:2}.

\subsection{Additional notation}
As usual, $C, C'$ will denote positive constants whose value may change from one use to another.

\section{Proof of Theorems~\ref{thm:complete},~\ref{thm:CC_BCRW} and Corollaries}
\label{sec:MainProofs}
In this section we prove Theorem~\ref{thm:complete},~\ref{thm:CC_BCRW} and Corollaries~\ref{cor:CompelteConvergenceXi} and~\ref{cor:Fixation}. The proof of Theorem~\ref{thm:complete} is essentially linear. It consists of a sequence of lemmas, one derived from the other with the theorem following from the last. Nevertheless, to put some hierarchical structure in the proof, we have split it into two main steps which are stated in the next subsection as key lemmas. They are of interest on their own. The proofs of these lemmas are deferred to subsections~\ref{sub:ProofOfZhang} and~\ref{sub:Hitting}, so that we can first show how the theorem follows from them -- this is done in the subsection~\ref{sub:ProofOfThm}. In this subsection we also prove the two corollaries and Theorem~\ref{thm:CC_BCRW}. They are only a short step once the theorem is established.

\subsection{Key Lemmas}
The first key step is an analog of Zhang's Lemma for the contact process on regular trees~\cite[Proposition 5]{zhang1996complete}. It is the main step in Zhang's proof for the Complete Convergence Theorem in this setting. The proof was later simplified by Schonmann and Salzano~\cite[Proposition 1]{salzano1998new} and our arguments are essentially an adaption of the latter to this model.
\begin{lem}[An analog of Zhang's Lemma]
\label{lem:Zhang}
Fix $d \geq 3$. For $\lambda > \lambda_l(\bbT)$,
\begin{equation}
\label{eqn:13}
  \inf_{t \geq 0} \bbP(0 \in \hat{\xi}^{0, \bbT_{0}^-}_t) > 0 \,.
\end{equation}
\end{lem}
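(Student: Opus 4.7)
By the duality relation~\eqref{eqn:201} (which takes the same form with $\bbG^-$ in place of $\bbG$ under $-$ boundary conditions), applied with $A=B=\{0\}$ and $\bbG=\bbT_0$,
\[
\bbP\bigl(0\in\hat\xi^{0,\bbT_0^-}_t\bigr) \;=\; \bbP\bigl(0\in\xi^{0,\bbT_0^-}_t\bigr) \;=:\; h(t),
\]
so the lemma is equivalent to $\inf_{t\ge 0} h(t)>0$. I will work on the WB side, where the attractiveness property~\eqref{monotonicity} is directly available.

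The first structural input is the supermultiplicative inequality
\[
h(s+t)\;\ge\; h(s)\,h(t), \qquad s,t\ge 0,
\]
obtained from the Markov property at time $s$ together with~\eqref{monotonicity}: on the event $\{0\in\xi^{0,\bbT_0^-}_s\}$ the configuration dominates $\{0\}$, so
\[
\bbP\bigl(0\in\xi^{0,\bbT_0^-}_{s+t}\;\big|\;\cF_s\bigr) \;\ge\; \mathbf{1}_{\{0\in\xi^{0,\bbT_0^-}_s\}}\, h(t),
\]
and taking expectations yields the claim. Continuity of $h$ is standard, from the bounded Poisson rate of flips at $0$. The hypothesis $\lambda>\lambda_l(\bbT^d)$ now enters through local survival on the rooted subtree: I would first show $\bbP(\Omega_l^{0,\bbT_0^-})>0$, the heuristic being that by vertex-transitivity at $0$ the full-tree local-survival event is sustained by survival in at least one of the $d$ directions out of $0$, which by symmetry can be taken inside $\bbT_0$; the $-$ boundary at the parent only suppresses infection contributions from outside $\bbT_0$, which become asymptotically irrelevant once survival proceeds deep in a child-subtree. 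A sojourn-time argument (each return of the Markov chain to a configuration containing $0$ spends a positive expected duration there) then yields $\liminf_{T\to\infty} T^{-1}\int_0^T h(t)\,dt > 0$, and since $h \le 1$ this forces $\limsup_{t\to\infty} h(t)>0$.

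The principal technical obstacle is closing the gap from $\limsup h>0$ to the uniform bound $\inf h>0$. Supermultiplicativity and continuity alone do not suffice, as the counterexample $h(t)=e^{-\sqrt t}$ shows. Following Schonmann--Salzano, the plan is a regeneration/restart argument: at each time $T$ along a subsequence with $h(T)\ge c$, one would show that with uniform positive probability $\xi^{0,\bbT_0^-}_T$ dominates a finite ``seed'' configuration $\eta$ located deep inside $\bbT_0$; a fixed-time deterministic propagation bound starting from such an $\eta$ (using $\lambda\ge 1$ and the separation of $\eta$ from the $-$ boundary) would then give $\bbP(0\in\xi^{\eta,\bbT_0^-}_{T_0})\ge c_0$, so by~\eqref{monotonicity} and the Markov property $h(T+T_0)\ge c\,c_0$ uniformly in $T$, and continuity on $[0,T_0]$ fills in the remaining range. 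Producing the correct seed $\eta$---in particular verifying that $\xi^{0,\bbT_0^-}_T$ dominates it with uniform positive probability despite the $-$ boundary---is where the specific branching structure of $\bbT^d$ has to be exploited, and is the delicate step of the proof.
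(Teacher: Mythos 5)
Your reduction via duality to $\inf_t \bbP(0 \in \xi^{0,\bbT_0^-}_t) > 0$ is exactly the paper's first step, and your diagnosis of the obstacle is also correct: supermultiplicativity plus continuity cannot, by themselves, upgrade $\limsup h > 0$ to $\inf h > 0$. But the regeneration mechanism you sketch to close that gap has a genuine flaw, and it is not the route the paper takes.

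The proposal hinges on finding a \emph{fixed} finite seed $\eta \subset \bbT_0$ such that $\bbP(\xi^{0,\bbT_0^-}_T \supseteq \eta) \geq c' > 0$ uniformly over $T$ in a suitable (uniformly spaced) set of times, followed by a one-shot propagation estimate $\bbP(0 \in \xi^{\eta,\bbT_0^-}_{T_0}) \geq c_0$. This cannot work as stated. For $\lambda$ only slightly above $\lambda_l(\bbT)$ the occupied set, conditioned on survival, typically lies at graph distance growing linearly in $T$ from the root; for \emph{any} fixed $\eta$ one has $\bbP(\xi^{0,\bbT_0^-}_T \supseteq \eta) \to 0$ as $T \to \infty$, so there is no uniform $c'$. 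In other words, one cannot ``anchor'' the restart at a fixed location -- the seed must be allowed to live on a sphere whose radius grows with $T$, and then the per-site probability of making it back to $0$ from that sphere decays exponentially in the radius. Your plan offers no mechanism to beat this decay. (Relatedly, your sojourn-time step is also shaky: $\bbP(\Omega_l^{0,\bbT_0^-}) > 0$ gives infinitely many reinfections on a positive-probability event, but does not by itself force a linear-in-$T$ lower bound on $\int_0^T h$; the reinfection times could be sparse.)

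What actually makes the argument close, and what the paper does (following Schonmann--Salzano's treatment of the contact process), is quantitative: one uses local survival under $-$ boundary conditions (the paper's Lemmas~\ref{lem:T0Thresholds} and~\ref{lem:TxTxyThresholds}, matching your heuristic) to produce $r, s$ and a per-edge propagation probability $p > 1/\sqrt{d-1}$ such that $\bbP(\xi^{0,\bbT_{0x}^-}_s \ni x) > p^r$ for any $x$ at distance $r$ from $0$. The point is that $(d-1)p > \sqrt{d-1} > 1$, so a branching-process comparison shows that $|\xi^{0,\bbT_0^-}_{\tilde s} \cap \rmS_0(\tilde r)|$ exceeds $\tfrac12((d-1)p)^{\tilde r}$ with uniformly positive probability $\epsilon$. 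Although each such occupied site on the sphere has only probability $\approx p^{\tilde r}$ of bringing the infection back to $0$, the number of independent attempts is $\approx ((d-1)p)^{\tilde r}$, and since $(d-1)p^2 > 1$ the product is large; taking $\tilde r$ large, at least one attempt succeeds with probability $\geq \tfrac12$. This yields an inductive two-step recursion $p_{n+2} \geq \epsilon \cdot \tfrac12 \cdot p^{\tilde r}$ on $p_n = \bbP(\xi^{0,\bbT_0^-}_{n\tilde s} \ni 0)$, uniform in $n$, and the remaining continuous-time range is filled in by a crude no-flip estimate. The essential insight missing from your proposal is precisely this exponential trade-off -- replacing a single fixed seed by exponentially many far-away seeds, and exploiting the inequality $\lambda_l(\bbT^d) \geq d/(2\sqrt{d-1})$ in the form $(d-1)p^2 > 1$ -- without which the restart probability cannot be made uniform.
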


Next, we need tail estimates on the distribution of the inclusion time of a neighboring vertex for $\hat{\xi}_\cdot$. The lemma shows that the tail of this distribution decays faster than any polynomial. We believe that this is not optimal, but for the sake of showing complete survival this is enough. 
\begin{lem}[Super-polynomial decay for inclusion times]
\label{lem:hitting}
Fix $d \geq 3$ and $\lambda > \lambda_l(\bbT)$. For any $x \sim y$ neighboring vertices of $\bbT$,
\begin{equation}\
\label{eqn:14}
	\lim_{t \to \infty} \frac{\log \textsl{}\bbP(\hat{\tau}_y^{x, \bbT} > t)}{\log t} = -\infty \,.
\end{equation}
\end{lem}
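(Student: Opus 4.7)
The plan is to combine Lemma~\ref{lem:Zhang} with a renewal-type argument built from independent ``direct-branching'' attempts on the edge $\{x,y\}$ in the graphical representation. By the coupling $\hat{\xi}^{x,\bbT_x^-}_t \subseteq \hat{\xi}^{x,\bbT}_t$ that the graphical representation makes automatic (adding back the boundary activity can only add paths contributing to $\hat\xi^{x,\bbT}$), Lemma~\ref{lem:Zhang} yields the uniform lower bound
\[
  c := \inf_{t \geq 0}\,\bbP\bigl(x \in \hat{\xi}^{x,\bbT}_t\bigr) > 0.
\]
Without loss of generality we take $y$ to be a child of $x$ in the chosen orientation, so $y \in \bbT_x$.

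Fix $T > 0$ and $\epsilon \in (0, T)$. For each integer $n \geq 0$ let $A_n := \{x \in \hat{\xi}^{x,\bbT}_{nT}\}$ and let $E_n$ be the event that, during $[nT,\,nT+\epsilon]$, the first Poisson arrival on any edge incident to $x$ in the graphical representation $D_{\bbT,\lambda}$ lies in $\cN^\bullet_{x,y}$. Because the $E_n$'s depend on disjoint time intervals and only on edges incident to $x$, they are mutually independent with $\bbP(E_n) = p_\epsilon > 0$, and each $E_n$ is independent of the natural filtration $\cF_{nT}$. On $A_n \cap E_n$ the particle occupying $x$ at time $nT$ is undisturbed on $x$'s edges until the first arrival of $\cN^\bullet_{x,y}$ within $[nT,\,nT+\epsilon]$, which branches a particle to $y$; hence $\hat\tau_y^{x,\bbT} \leq nT+\epsilon$, and we obtain the containment
\[
  \bigl\{\hat\tau_y^{x,\bbT} > NT\bigr\} \subseteq \bigcap_{n=0}^{N-1}(A_n \cap E_n)^c.
\]

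The main obstacle is estimating this intersection. Although $E_n$ is independent of $\cF_{nT}$, and in particular of $A_n$, the events $(A_n)_n$ are correlated, and more crucially each $E_n$ shares randomness with $A_m$ for $m > n$. My plan is to decouple by introducing a comparison BCRW $\tilde\xi^x$ built from the graphical representation with the Poisson activity on edges incident to $x$ during $\bigcup_n[nT,\,nT+\epsilon]$ suppressed, and working with $\tilde A_n := \{x \in \tilde\xi^x_{nT}\}$, which is measurable with respect to the ``background'' $\sigma$-algebra $\cG$ and hence independent of $(E_n)$. For $\epsilon > 0$ small enough the removed activity is a small perturbation of the full graphical representation, and a quantitative refinement of Lemma~\ref{lem:Zhang} should still yield $\bbP(\tilde A_n) \geq c/2$ uniformly in $n$. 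Under these conditions,
\[
  \bbP\Bigl(\bigcap_{n=0}^{N-1}(\tilde A_n \cap E_n)^c\Bigr) = \bbE\Bigl[(1-p_\epsilon)^{\sum_{n=0}^{N-1}\mathbf{1}_{\tilde A_n}}\Bigr],
\]
and a concentration estimate on $\sum_n \mathbf{1}_{\tilde A_n}$---obtained by iterating Zhang's estimate together with a covariance bound coming from the localized dependence structure of the BCRW in the graphical representation---gives super-polynomial decay in $N$, and hence of $\bbP(\hat\tau_y^{x,\bbT}>NT)$. The core technical hurdle is establishing this quantitative stability of Zhang's Lemma under the modification defining $\tilde\xi^x$, made delicate by the fact that removing $\cN^\circ$ and $\cN^\bullet$ Poisson points has opposite monotonicity effects on $\hat{\xi}^x$.
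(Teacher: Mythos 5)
Your approach is genuinely different from the paper's, but it has two serious gaps that are not resolved (and that you yourself flag).

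The paper does not use temporal renewals at $x$ at all. Instead it exploits \emph{spatial} independence: by Lemma~\ref{lem:Congestion}, at time $\alpha s$ there are $\geq e^{a's}$ occupied vertices inside $\rmB_0(b's)$ with probability $1-e^{-\alpha\delta s}$. It then enumerates $n = e^{a's/2}$ of these vertices $z_0,z_1,\dots$, picking $z_k$ outside $\cup_{l<k}\Gamma_{z_l}(r)$ so that the subtree process $\hat\xi^{z_k,\bbT_{z_k}^-}_\cdot$ used to certify that $z_k$ is still occupied (via Lemma~\ref{lem:Zhang}) is \emph{independent} of all previous attempts. Each attempt --- re-occupy $z_k$, then run along $\Gamma_{z_k}(r)$ to $y$ via Lemma~\ref{lem:Commute} --- succeeds with probability $\geq \epsilon e^{-2\beta b's}$. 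With the choice $\beta = a'/(5b')$, the product $n\cdot\epsilon e^{-2\beta b's} = \epsilon e^{(a'/2-2\beta b')s}$ blows up, so the probability all attempts fail is doubly exponentially small and the dominant error is $e^{-\alpha\delta s}$; letting $\alpha\to\infty$ yields the super-polynomial decay. The whole mechanism depends on having exponentially many spatially disjoint ``seeds'', none of which appears in your proposal.

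As for your proposal, the first gap is the decoupling step, which you correctly identify as the ``core technical hurdle'' but do not resolve. Suppressing the $\cN^\circ$ points on edges incident to $x$ makes the dual \emph{larger} (no movement off $x$), while suppressing the $\cN^\bullet$ points makes it \emph{smaller} (no births onto $x$), so there is no stochastic comparison between $\tilde\xi^x$ and $\hat\xi^x$ in either direction. This has two consequences: you cannot deduce $\bbP(\tilde A_n)\geq c/2$ from Lemma~\ref{lem:Zhang} by monotonicity, and --- more fundamentally --- the containment $\{\hat\tau_y^{x,\bbT}>NT\}\subseteq\bigcap_n(\tilde A_n\cap E_n)^c$ does not follow from your construction (you established it only with $A_n$, not $\tilde A_n$). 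Without a one-sided comparison, the identity $\bbP(\bigcap_n(\tilde A_n\cap E_n)^c)=\bbE[(1-p_\epsilon)^{\sum\mathbf{1}_{\tilde A_n}}]$ is simply about the wrong event. The second gap is the concentration of $\sum_n\mathbf{1}_{\tilde A_n}$. Lemma~\ref{lem:Zhang} gives a uniform lower bound on each marginal probability, but the events $\{x\in\hat\xi_{nT}^{x,\bbT}\}$ at consecutive times are far from independent --- the BCRW can effectively ``leave the vicinity of $x$'' on a scenario of positive probability and stay away for a long stretch --- and nothing in the paper's toolkit controls these temporal correlations enough to produce the claimed ``covariance bound.'' Absent both ingredients, the argument does not close; the paper's spatial-independence route is precisely what avoids having to control long-range temporal correlations at a single vertex.
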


\medskip
\subsection{Proof of the theorem and corollaries}
\label{sub:ProofOfThm}
\begin{proof}[Proof of Theorem~\ref{thm:complete}]
Fix $d \geq 3$ and $\lambda > \lambda_l(\bbT)$. It is clearly enough to prove 
\begin{equation}
\label{eqn:100}
	\bbP \big(\xi_t^{0, \bbT} \ni 0 \text{ for all } t \geq 0 \big) > 0 \,.
\end{equation}
Indeed, since $\{\xi_t^{0, \bbT} \ni 0 \text{ for all } t \geq 0 \big \} \cap \big(\Omega_c^{0, \bbT}\big)^{\rm c}$ has zero probability, this shows that $\bbP(\Omega_c^{0, \bbT}) > 0$ as required.

\eqref{eqn:100} follows from the seemingly weaker statement,
\begin{equation}
\label{eqn:101}
	\bbP \big(\xi_t^{0, \bbT_0^+} \ni 0 \text{ for all } t \geq 0 \text{ large enough} \big) > 0 \,.
\end{equation}
To see this, notice that \eqref{eqn:101} implies that there exists $\epsilon > 0$, $s > 0$ such that
\begin{equation}
	\bbP \big(\xi_t^{0, \bbT_0^+} \ni 0 \text{ for all } t \geq s \big ) > \epsilon \,.
\end{equation}
Therefore, writing $\tilde{\bbT}$ for $\bbT$ rooted at $0$, and recalling section \ref{sub:Graphical}, we have
\begin{equation}
\begin{split}
	\bbP \big(\xi_t^{0, \bbT} \ni 0 \text{ for all } t \geq 0 \big) 
		& \geq \bbP \Big(
			\cup_{x \sim 0} 
			\big\{ \xi_t^{x, \tilde{\bbT}_x^+} \ni x \text{ for all } t \geq s
				\,,\,\, \cN_{x,0}^\circ([0,s]) = 0 \big\} \, \Big)	 \\
		& \geq (\epsilon e^{-s})^d > 0 \,.
\end{split}
\end{equation}

To establish~\eqref{eqn:101} we will show that for all $s \geq 0$
\begin{equation}
\label{eqn:102}
	\bbP \big( \xi_t^{0, \bbT^+_0} \not \ni 0 \mbox{ for some } t \in [s^2,(s+1)^2) \big)
		\leq C s^{-2} \,.
\end{equation}
Since these probabilities are summable in $s=1,2, \dots$, the Borel-Cantelli Lemma will imply that~\eqref{eqn:101} holds (with probability $1$).

To this end, fix $s \in \bbN$ and let $t_0 < t_1 < \cdots < t_{s^4}$ be a partition of $[s^2, (s+1)^2)$ into $s^4$ sub-intervals of equal length. That is, $t_0 = s^2$, $t_{s^4} = (s+1)^2$ and $t_{k+1} - t_k = (2s+1)/s^4 \leq 3s^{-3}$. The left hand side in~\eqref{eqn:102} can be bounded above by
\begin{equation}
\begin{split}
	\bbP & 
		\big( \exists k \in {0, \dots, s^4} \,:\,\, \xi^{0, \bbT_0^+}_{t_k} \not \ni 0 \big) \\
		& + \bbP \Big( \exists k \in {0, \dots, s^4 -1 } : \,\,
			\sum_{x \sim 0} \cN_{x,0}^\circ \big( [t_k, t_{k+1}) \big) 
				+ \cN_{x,0}^\bullet \big( [t_k, t_{k+1}) \big) \geq 2 \Big), 	
\end{split}
\end{equation}
where the second term is a bound on the probability that a site is infected and then healed during any time interval $[t_k,t_{k+1}]$. Using the Union Bound (and the tail of the Poisson distribution), this second term is bounded above by
\begin{equation}
\label{eqn:104}
	C s^4 (t_{k+1} - t_k)^2 \leq C' s^{-2} \,.
\end{equation}
The first term can be bounded above by 
\begin{equation}
\label{eqn:103}
	s^4 \sup_{t \in [s^2, (s+1)^2)} \bbP(\xi^{0, \bbT_0^+}_t \not \ni 0 \big) \,,
\end{equation}
and it remains to bound $\bbP \big(\xi^{0, \bbT_0^+}_t \not \ni 0 \big)$.

Let $y$ be the parent of $\bbT_0$. By the duality relation~\eqref{eqn:203} and Lemma~\ref{lem:hitting}, for all $t \geq 0$ large enough
\begin{equation}
	\bbP(\xi^{0, \bbT_0^+}_t \ni 0 \big) 
	\, = \, \bbP \big( \hat{\xi}_t^{0, \bbT_0^+} \ni 0 \ \text{ or }\ 
		\hat{\tau}^{0, \bbT_0^+}_y \leq t \big)
	\, \geq \, \bbP \big( \hat{\tau}^{0, \bbT}_y \leq t \big)
	\, \geq \, 1 - C t^{-3} \,.
\end{equation}
Therefore~\eqref{eqn:103} is bounded above by $C s^{-2}$. Combining this with~\eqref{eqn:104} we see that \eqref{eqn:102} holds as desired. This completes the proof of the Theorem.
\end{proof}

In fact, the following lemma, which is required for the proofs of the corollaries, shows that above $\lambda_c(\bbT)$, global and complete survival are equivalent up to an event with zero probability.
\begin{lem}
\label{lem:CompleteOnGlobal}
Let $d  \geq 3$ and $\lambda > \lambda_c(\bbT)$. Then for any finite $\xi_0 \in \cX$,
\begin{equation}
	\bbP \big(\Omega^{\xi_0}_c \big) = \bbP \big( \Omega^{\xi_0}_g \big) \,.
\end{equation}
\end{lem}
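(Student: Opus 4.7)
Since $\Omega_c^{\xi_0}\subseteq\Omega_g^{\xi_0}$ automatically, it suffices to prove $\bbP(\Omega_g^{\xi_0}\setminus\Omega_c^{\xi_0})=0$. The plan is to combine a standard bounded-martingale ($0$--$1$) argument with Lemma~\ref{lem:hitting} in order to upgrade the positive probability of complete survival from a single vertex, established in the proof of Theorem~\ref{thm:complete}, into a uniform lower bound on $f(A):=\bbP(\Omega_c^A)$ taken over all finite non-empty $A\subseteq\bbT$.

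Write $p:=\bbP(\Omega_c^{\{0\}})$, where we read $\Omega_c$ with respect to the vertex $0$ as in the proof of Theorem~\ref{thm:complete}. Since $\lambda>\lambda_c(\bbT)$ we have $p>0$. The key auxiliary claim is that $f(A)\geq p$ for every finite non-empty $A\subseteq\bbT$. The case $0\in A$ is immediate from the monotone coupling of Section~\ref{sub:Graphical}: $\xi_t^A\supseteq\xi_t^{\{0\}}$ forces $\Omega_c^A\supseteq\Omega_c^{\{0\}}$. For $0\notin A$, set $\tau:=\inf\{t\geq 0:0\in\xi_t^A\}$ and observe, from the graphical representation, that $\{\tau<\infty\}=\{\hat{\tau}_u^{0,\bbT}<\infty\text{ for some }u\in A\}$. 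Fixing any $u\in A$ together with a shortest-path $0=v_0,v_1,\dots,v_k=u$ in $\bbT$, I would iterate Lemma~\ref{lem:hitting} along the path, using the strong Markov property for $\hat{\xi}_\cdot$ and its monotonicity in the initial configuration, to conclude that $\bbP(\hat{\tau}_{v_k}^{v_0,\bbT}<\infty)=1$; hence $\bbP(\tau<\infty)=1$. Now apply the strong Markov property of $\xi_\cdot$ at $\tau$: on $\{\tau<\infty\}$, $\xi_\tau\supseteq\{0\}$, so the previous case gives $f(\xi_\tau)\geq p$, and therefore
\[
f(A)\;\geq\;\bbE\!\left[f(\xi_\tau)\,1_{\{\tau<\infty\}}\right]\;\geq\;p\,\bbP(\tau<\infty)\;=\;p.
\]

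With the uniform lower bound in hand, the rest is short. By the Markov property and L\'evy's upward theorem applied to the natural filtration $\cF_t:=\sigma(\xi_s:s\leq t)$,
\[
f(\xi_t)\;=\;\bbE\!\left[1_{\Omega_c^{\xi_0}}\,\big|\,\cF_t\right]\;\longrightarrow\;1_{\Omega_c^{\xi_0}}\quad\text{almost surely as }t\to\infty.
\]
On $\Omega_g^{\xi_0}$ the set $\xi_t$ is non-empty (and, since $|\xi_0|<\infty$, finite) for every $t\geq 0$, so the uniform bound gives $f(\xi_t)\geq p>0$ throughout. The $\{0,1\}$-valued limit is thus at least $p$ a.s.\ on $\Omega_g^{\xi_0}$, forcing it to equal $1$ there, i.e.\ $\Omega_g^{\xi_0}\subseteq\Omega_c^{\xi_0}$ up to a null set.

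The principal obstacle is the iteration used to lift Lemma~\ref{lem:hitting} from neighbours to arbitrary vertex pairs. One has to verify that when $\hat{\xi}^0_\cdot$ first includes the intermediate vertex $v_i$, at some a.s.\ finite stopping time $S_i$, the subsequent evolution dominates, via the graphical-representation coupling, an independent fresh BCRW started at $\{v_i\}$, so that Lemma~\ref{lem:hitting} can be reapplied to produce an a.s.\ finite inclusion time for $v_{i+1}$. The argument is standard but the bookkeeping around the strong Markov property and the coupling deserves care.
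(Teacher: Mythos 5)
Your overall strategy---establishing a uniform lower bound $f(A):=\bbP(\Omega_c^A)\geq c>0$ over all finite non-empty $A$ and then applying L\'evy's upward theorem to the bounded martingale $f(\xi_t^{\xi_0})=\bbP(\Omega_c^{\xi_0}\mid\cF_t)$---is sound and is genuinely differently organized than the paper's proof (which shows $\bbP(\Omega_c^{\rmB_0(r)})\uparrow 1$ by a neat self-consistency argument and then conditions on $\Omega_g^{\xi_0}$ together with the a.s.\ finiteness of the ball-inclusion time). However, the step where you assert $\bbP(\tau<\infty)=1$ for $\tau:=\inf\{t:0\in\xi_t^A\}$ is false. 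The WB process started from a finite $A$ with $0\notin A$ reaches the empty configuration before ever infecting $0$ with probability at least $\lambda^{-|A|}>0$ (for $A=\{x\}$, $x\neq 0$, already the first transition is a healing with probability $1/(1+\lambda)$), so $\bbP(\tau=\infty)>0$. The duality argument you sketch cannot repair this: the identity $\{\tau<\infty\}=\{\hat{\tau}_u^{0,\bbT}<\infty\ \text{for some}\ u\in A\}$ would require the forward-in-$t$ reading of~\eqref{eqn:18} to be distributed \emph{as a process} as the BCRW, but the time-reversal argument only identifies the single-time marginals; Lemma~\ref{lem:hitting} is a statement about the genuine BCRW, and the paper's own uses of duality (eqns.~\eqref{eqn:201}--\eqref{eqn:203}) are carefully reduced to single-time statements. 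So your chain of inequalities only yields $f(A)\geq p\,\bbP(\tau<\infty)$, which as written is not a uniform constant.

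The gap is patchable within your framework, but by a different mechanism than the one you propose: what is true, and suffices, is a uniform bound $\bbP(\tau<\infty)\geq c>0$, obtained from local survival of the WB process itself rather than from duality. Indeed $\Omega_l^{A}\subseteq\{\tau<\infty\}$, and for any $x\in A$ monotonicity together with vertex-transitivity of $\bbT$ gives $\bbP(\Omega_l^{A})\geq\bbP(\Omega_l^{\{x\}})=\bbP(\Omega_l^{\{0\}})=:c>0$ (recall $\lambda>\lambda_c(\bbT)\geq\lambda_l(\bbT)$). This yields $f(A)\geq pc>0$, and the remainder of your martingale argument then goes through unchanged. Note this is essentially the same ingredient the paper invokes when it deduces $\bbP\bigl(\Omega_g^{\xi_0}\cap\{\tau_{\rmB_0(r)}^{\xi_0}=\infty\}\bigr)=0$.
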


\begin{proof}
Recall that definition~\eqref{eqn:3} of $\Omega^{\xi_0}_c$ does not depend on the observed vertex $x \in \bbT$ and we can therefore choose $x=0$. For $s \geq 0$ and $u \geq 0$ define
\begin{equation}	
	\cA^{\xi_0}(s,u) := \big \{ \xi^{\xi_0}_t \ni 0 \text{ for all } t \in [s, s+u] \big\} \, .
\end{equation}
Since $\Omega^{\xi_0} = \cup_{s \geq 0} \cap_{u \geq 0} \cA^{\xi_0}(s,u)$ we have
\begin{equation}
	\lim_{s \to \infty} \lim_{u \to \infty} \bbP \big( \cA^{\xi_0}(s,u) \big) 
		= \bbP \big( \Omega^{\xi_0}_c \big) \,.
\end{equation}
Now, we claim that as $r \uparrow \infty$,
\begin{equation}
\label{eqn:43}
	\bbP \big( \Omega_c^{\rmB_0(r)} \big) \uparrow \alpha = 1 \,.
\end{equation}
Indeed, by monotonicity the limit exists and so we may write
\begin{equation}
\begin{split}
	\bbP \big( \Omega_c^{\xi_0} \big) 
		& \, = \,\lim_{s \to \infty} \lim_{u \to \infty}  
			\bbP \big( \Omega_c^{\xi_0} \cap \cA^{\xi_0}(s,u) \big) \\
		& \, = \, \lim_{s \to \infty} \lim_{u \to \infty}  
			\bbP \big( \cA^{\xi_0}(s,u) \big) 
			\bbE \Big[ \bbP \big( \Omega_c^{\xi_0} \big| \xi_{s+u}^{\xi_0} \big)
				\Big| \cA^{\xi_0}(s,u) \Big]
		\ \leq \ \bbP \big( \Omega_c^{\xi_0} \big) \alpha \,.
\end{split}
\end{equation}
The last inequality follows from monotonicity again, since $\xi_{s+u}^{\xi_0}$ must be included in some $\rmB_0(r)$ for $r$ large enough. Since $\bbP \big( \Omega_c^{\xi_0} \big) > 0$ it follows that $\alpha$ must be $1$.

Now if $\lambda > \lambda_l(\bbT)$ then for any vertex $x \in \bbT$ and $r \geq 0$, there exists $s_{x,r} < \infty$ such that
\begin{equation}
	\bbP \big( \tau_{\rmB_0(r)}^x < s_{x,r} \big) 
		\geq \tfrac12 \bbP \big( \Omega_l^x ) > \epsilon \,.
\end{equation}
for some $\epsilon > 0$ independent of $x$ or $r$. Since on $\Omega_g^{\xi_0}$ infected vertices exist at all times, it follows from monotonicity and Markov property that for all $r \geq 0$,
\begin{equation}
	\bbP \big( \Omega_g^{\xi_0} \cap \{\tau_{\rmB_0(r)}^{\xi_0} = \infty\} \big) = 0 \,.
\end{equation}
Consequently we may write,
\begin{equation}
	\bbP \big(\Omega_c^{\xi_0} \big| \Omega_g^{\xi_0} \big) 
		\, \geq \, \bbP \big(\Omega_c^{\xi_0} \big| \tau_{\rmB_0(r)}^{\xi_0} < \infty \big) 
			\bbP \big( \tau_{\rmB_0(r)}^{\xi_0} < \infty  \, \big| \, \Omega_g^{\xi_0} \big) 
		\, \geq \, \bbP \big( \Omega_c^{\rmB_0(r)} \big) \,.
\end{equation}
Taking $r \to \infty$ and using~\eqref{eqn:43} we get $\bbP \big(\Omega_c^{\xi_0} \big| \Omega_g^{\xi_0} \big) = 1$, as desired.
\end{proof}

\begin{proof}[Proof of Corollary~\ref{cor:CompelteConvergenceXi}]
This is an immediate consequence from Theorem~\ref{thm:complete} and Lemma~\ref{lem:CompleteOnGlobal}. Indeed once $\lambda > \lambda_l(\bbT)$ we have for all $\xi_0 \in \cX$,
\begin{equation}
\label{eqn:401}
	\lim_{t \to \infty} \xi_t^{\xi_0} = 1_{\Omega_g^{\xi_0}} \bbT \, + \,
		(1-1_{\Omega_g^{\xi_0}}) \emptyset \qquad \bbP \text{-almost surely,}
\end{equation}
where we recall that the topology in $\cX$, viewed as the space of functions on $\bbT$, 
is that of pointwise convergence. This immediately gives~\eqref{e:2} and shows that any invariant measure must be a convex combination of $\delta_\bbT$ and $\delta_\emptyset$.
\end{proof}

\begin{proof}[Proof of Corollary~\ref{cor:Fixation}]
Part~\ref{item:CS1} follows immediately from~\eqref{eqn:401}. Part~\ref{item:CS2} holds because 
once $\lambda < \lambda_l(\bbT)$, starting from a finite configuration, the infection either dies out, or survives globally but not locally. In both cases, every vertex will eventually become $-$ and therefore fixate.
\end{proof}

\begin{proof}[Proof of Theorem~\ref{thm:CC_BCRW}]
Any of the duality relations can be used to prove this theorem. 
When $\lambda > \lambda_l(\bbT)$, Corollary~\ref{cor:CompelteConvergenceXi}, relation~\eqref{eqn:201} and~\eqref{eqn:2} imply that for all non-empty $\hat{\xi}_0$ and any $A \subseteq \bbT$ as $t \to \infty$,
\begin{equation}
	\bbP \big( \hat{\xi}_t^{\hat{\xi}_0} \cap A \neq \emptyset \big) 
	 	\, = \, \bbP \big( \xi_t^A \cap \hat{\xi}_0 \neq \emptyset \big)  
		\to \bbP \big( \Omega_g^A \big) = 1-\lambda^{-|A|} \,.
\end{equation}
This shows~\eqref{e:3}. On the other hand, if $\lambda \in \big[1, \lambda_l(\bbT) \big)$ then
for all finite $A \subseteq \bbT$ and finite $\hat{\xi}_0 \in \cX$ the above becomes,
\begin{equation}
	\bbP \big( \hat{\xi}_t^{\hat{\xi}_0} \cap A \neq \emptyset \big) 
	 	\, = \, \bbP \big( \xi_t^A \cap \hat{\xi}_0 \neq \emptyset \big)  
		\to 0 \,.
\end{equation}
as $t \to \infty$. This shows~\eqref{e:4}.
\end{proof}

\subsection{Proof of Lemma~\ref{lem:Zhang}}
\label{sub:ProofOfZhang}
The proof will be carried out using a number of lemmas. For an infinite connected bounded-degree graph $\bbG = (V,E)$, possibly with associated boundary conditions, we shall write $\lambda_g(\bbG)$ and $\lambda_l(\bbG)$ for the threshold value of $\lambda$ for the possibility of global and local survival for $\xi_\cdot$ when the underlying graph is $\bbG$. Formally,
\begin{equation}
\begin{split}
	\lambda_g(\bbG) & := \inf \big\{\lambda > 0 :\: \bbP(\Omega^{\xi_0, \bbG}_g) > 0 \big\} \,, \\
	\lambda_l(\bbG) & := \inf \big\{\lambda > 0 :\: \bbP(\Omega^{\xi_0, \bbG}_l) > 0 \big\} \,,
\end{split}
\end{equation}
where $\Omega^{\xi_0, \bbG}_g$ and $\Omega^{\xi_0, \bbG}_l$ are defined as in \eqref{eqn:301} and \eqref{eqn:302} with $\bbG$ being the underlying graph and $\xi_0$ is any finite non-empty initial configuration. Notice that as $\bbG$ may have associated boundary conditions, it is no longer clear that $\lambda_g(\bbG) = 1$. 

Recall that $\bbT_0$ is the sub-tree of $\bbT$ rooted at $0$. Our first lemma shows that if a graph $\bbG$ (with or without boundary conditions) contains a copy of this sub-tree which is accessible only through its root, then its threshold values are at least as small as those of $\bbT$. Note by the monotonicity statement (\ref{monotonicity}) it is enough to show this for $\bbG=\bbT_0^-$.
\begin{lem}
\label{lem:T0Thresholds}
Fix $d \geq 3$. If $\bbG$ is any infinite connected bounded-degree graph, possibly with associated boundary conditions, that contains a copy of $\bbT_0$, which is connected to the rest of the graph only through its root $0$, then 
\begin{eqnarray}
\label{eqn:20}
	\lambda_g(\bbG) & \leq 	& \lambda_g(\bbT) = 1\,, \\
\label{eqn:20.1}
	\lambda_l(\bbG) & \leq 	& \lambda_l(\bbT) \,.
\end{eqnarray}
\end{lem}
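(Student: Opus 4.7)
The plan is first to reduce to the case $\bbG=\bbT_0^-$ via a graphical-representation coupling, and then to establish the two inequalities for $\bbT_0^-$ by a ``switching'' construction that transports the process from the root $0$ into a sub-tree of $\bbT_0$ carrying no external boundary at all.

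For the reduction, I would couple $\xi^{A,\bbG,\lambda}$ and $\xi^{A,\bbT_0^-,\lambda}$ on a common probability space for every finite $A\subseteq\bbT_0$, using the same Poisson marks on the edges inside $\bbT_0$ together with the single edge joining $0$ to its unique exterior neighbor. On $\bbT_0^-$ the exterior vertex is frozen at $-$ and can only heal $0$ via the $\cN^\circ$-marks on that edge; on $\bbG$ the same healing marks still act whenever the exterior vertex is $-$, and additional infection marks act whenever it is $+$. Hence $\xi^{A,\bbT_0^-,\lambda}_t\subseteq\xi^{A,\bbG,\lambda}_t$ for all $t\geq 0$, giving $\lambda_g(\bbG)\leq\lambda_g(\bbT_0^-)$ and $\lambda_l(\bbG)\leq\lambda_l(\bbT_0^-)$. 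It therefore suffices to prove both inequalities for $\bbG=\bbT_0^-$.

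The core construction on $\bbT_0^-$ is as follows. Fix a child $c_1$ of $0$ and consider the race between two independent exponential times: $R_2$, the time of the first segment oriented from $0$ to $c_1$ (drawn from $\cN^\bullet_{0,c_1}\cup\cN^\circ_{0,c_1}$, total rate $\lambda$), and $R_1$, the time of the first hole on $0$'s ray (drawn from $\cN^\circ_{u,0}$ as $u$ runs over the $d$ neighbors of $0$ in $\bbT$, total rate $d$). With probability $\lambda/(\lambda+d)>0$ one has $R_2<R_1$, and on this event there is an explicit path $(0,0)\to(c_1,R_2)$ in $D_{\bbT_0^-,\lambda}$. Both $R_2$ and the event $\{R_2<R_1\}$ are measurable with respect to Poisson processes on edges not contained in the sub-tree $\bbT_{c_1}$ rooted at $c_1$, so they are independent of $D_{\bbT_{c_1},\lambda}$. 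Consequently, conditional on $\{R_2<R_1\}$, the forward cluster of $(c_1,R_2)$ inside $D_{\bbT_{c_1},\lambda}$ is distributed as the WB process on the standalone graph $\bbT_{c_1}$ started from $\{c_1\}$, with no interaction whatsoever with the exterior of $\bbT_{c_1}$.

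For the global inequality $\lambda_g(\bbT_0^-)\leq 1$: the graph $\bbT_{c_1}$ is isomorphic to $\bbT_0$, and on this standalone graph the embedded chain $|\xi_t|$ is a genuine birth-death chain with $P(\mathrm{up})=\lambda/(\lambda+1)$ at every transition, by the balance argument of~\eqref{eqn:303} (no boundary edges spoil the cancellation). Starting from $\{c_1\}$ it survives with probability $1-1/\lambda>0$ for every $\lambda>1$, so the switching construction yields $\bbP(\Omega_g^{\{0\},\bbT_0^-,\lambda})\geq\tfrac{\lambda}{\lambda+d}(1-1/\lambda)>0$. For the local inequality $\lambda_l(\bbT_0^-)\leq\lambda_l(\bbT)$ I would apply the same switch, but now require the forward cluster in $D_{\bbT_{c_1},\lambda}$ to revisit a fixed vertex $v\in\bbT_{c_1}$ at arbitrarily large times. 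The main obstacle is that $\bbT_{c_1}$ has only $d-1$ neighbors at its root, so one must show that the local-survival threshold of the WB process on the standalone $\bbT_{c_1}$ equals $\lambda_l(\bbT)$. My plan for this is to choose $v$ deep inside $\bbT_{c_1}$ and a finite initial configuration concentrated near $v$: since the ball of any fixed radius around $v$ in $\bbT_{c_1}$ agrees with the corresponding ball in $\bbT$, a truncation/restart argument transfers local survival from $\bbT$, available by the hypothesis $\lambda>\lambda_l(\bbT)$, to the standalone $\bbT_{c_1}$ at $v$; combining this with the independent switching from $(0,0)$ to $(c_1,R_2)$ then gives $\lambda_l(\bbT_0^-)\leq\lambda_l(\bbT)$.
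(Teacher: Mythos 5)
Your reduction to $\bbT_0^-$ by monotonicity matches the paper, but the switching construction has a gap that runs in the wrong direction. You observe (correctly) that the forward cluster of $(c_1,R_2)$ inside $D_{\bbT_{c_1},\lambda}$ is distributed as the WB process on the free-standing tree $\bbT_{c_1}$ started from $\{c_1\}$, and that it is independent of $\{R_2<R_1\}$. However, the inference $\bbP(\Omega_g^{\{0\},\bbT_0^-,\lambda})\geq\tfrac{\lambda}{\lambda+d}\bbP(\Omega_g^{\{c_1\},\bbT_{c_1},\lambda})$ needs survival of that free-standing forward cluster to imply survival of $\xi_\cdot^{\{0\},\bbT_0^-,\lambda}$, i.e.\ it needs the free-standing cluster to be \emph{contained} in the $\bbT_0^-$ one. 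It is not: the ray of $c_1$ in $D_{\bbT_0^-,\lambda}$ carries holes from $\cN^\circ_{u,c_1}$ for all $d$ neighbors $u\sim c_1$ in $\bbT$, including $u=0$, whereas the free-standing $D_{\bbT_{c_1},\lambda}$ sees only the $d-1$ children. A path in $D_{\bbT_{c_1},\lambda}$ through $(c_1,t)$ can therefore be severed by a hole from $\cN^\circ_{0,c_1}$ in $D_{\bbT_0^-,\lambda}$. In process terms, $0$ heals $c_1$ at rate $1$ whenever $0$ is $-$, a healing the free-standing $\bbT_{c_1}$ does not have, so the free-standing process is \emph{more}, not less, survival-prone at its root, and the comparison you need is backwards. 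The version of $\bbT_{c_1}$ that $\bbT_0^-$ demonstrably dominates after the switch is $\bbT_{c_1}^-$, which is isomorphic to $\bbT_0^-$ -- the construction is circular. The local half inherits the same defect, and the ``truncation/restart'' step you sketch to transfer $\lambda_l$ from $\bbT$ to the free-standing $\bbT_{c_1}$ controls only bounded time windows and so cannot by itself yield the infinite-time statement; that transfer is essentially the entire difficulty.

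The paper circumvents the circularity with a different idea: it exploits the intermediate phase $\lambda_g(\bbT)<\lambda_l(\bbT)$ from Proposition~\ref{prop:Bounds}. For $\lambda$ in that window the infection on $\bbT$ from $\{0\}$ survives globally but drifts to infinity; since $|\xi_t^{0,\bbT}|<\infty$ at all finite times, with positive probability there is a finite time $T$ and a neighbor $x$ of $0$ after which the surviving infection sits entirely in $\bbT_x\setminus\{x\}$ and \emph{never revisits} $x$. On that event the $-$ boundary at the parent of $x$ is never felt, giving global survival on $\bbT_x^-\cong\bbT_0^-$ for every $\lambda\in(1,\lambda_l(\bbT))$, hence $\lambda_g(\bbT_0^-)\leq 1$. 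For $\lambda'>\lambda_l(\bbT)$ the local bound then follows by reinfection: global survival on $\bbT_0^-$ plus a uniform lower bound $\bbP(\tau_0^{x,\bbT_0^-,\lambda'}<T_x)>\delta$ (which is inherited from $\bbT$ because any graphical-representation path from $x\in\bbT_0$ to $0$ stays inside $\bbT_0$ until it first reaches $0$) force $0$ to be reinfected infinitely often almost surely on the survival event. The ``never return'' event does exactly what your switch aimed for -- it removes the boundary's influence -- but without altering any degree, which is where your construction breaks down.
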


\begin{proof}
By monotonicity it is enough to show this for $\bbG=\bbT_0^-$. 
Fix any $\lambda$ such that
\begin{equation}
\label{eqn:21}
	\bbP(\Omega_g^{0, \bbT} \setminus \Omega_l^{0, \bbT}) > 0.
\end{equation}
This is always possible, since $\lambda_g(\bbT) < \lambda_l(\bbT)$ for all $d \geq 3$,
as shown in part~\ref{item:CBOL1} of Proposition~\ref{prop:Bounds}. Clearly, the distribution of $\xi^{0, \bbT}_{\cdot}$ on $\bbT_x$ is the same for any neighbor $x$ of $0$. Also, at any time $t$ we have $\big| \xi_t^{0, \bbT} \big| < \infty$. These two facts, along with~\eqref{eqn:21}, imply that we may find $T > 0$ and a finite subset of vertices $A \subseteq \bbT_x \setminus \{x\}$ such that the following event has positive probability:
\begin{equation}
\{ \xi_T^{0, \bbT} \cap \bbT_x = A \, \text{ and } \,
   	\xi_t^{0, \bbT} \cap \bbT_x \neq \emptyset \,,\,\, x \notin \xi_t^{0, \bbT} 
   	 \, \text{ for all } t \geq T \}.
\end{equation}
By the Markov property, it follows that 
\begin{equation}\label{eqn:210}
\bbP(\xi_t^{A, \bbT}\neq \emptyset\mbox{ and } x\notin\xi_t^{A, \bbT} \, \mbox{ for all }t\geq 0)>0.
\end{equation}
Observe that this probability does not change if we add $-$ boundary conditions on $\bbT \setminus \bbT_x$. Since, in addition, any two finite configurations are obtainable from each other using a finite number of transitions, we arrive to,
\begin{equation}\label{eqn:211}
\bbP(\xi_t^{x, \bbT_x^-}\neq \emptyset \mbox{ for all }t\geq 0)>0.
\end{equation}
This shows~\eqref{eqn:20} as $\bbT_x^-$ is isomorphic to $\bbT_0^-$ and $\lambda\in(\lambda_g(\bbT),\lambda_l(\bbT))$ was arbitrary.

Next, suppose that $\lambda' > \lambda_l(\bbT)$. Since $\Omega_l^{0, \bbT, \lambda'}$ occurs with positive probability, there must exist $\delta > 0$ and $T_x > 0$ for all $x \in \bbT$ such that
\begin{equation}
	\bbP(\tau_0^{x, \bbT, \lambda'} < T_x) > \delta \,,\  \forall x \in \bbT \,.
\end{equation}
This still holds, under $-$ boundary conditions, that is
\begin{equation}
\label{eqn:28}
	\bbP(\tau_0^{x, \bbT_0^-, \lambda'} < T_x) > \delta \,,\  \forall x \in \bbT_0 \,.
\end{equation}
Since $\lambda' > \lambda$, it follows from~\eqref{eqn:211} via monotonicity that 
\begin{equation}
\label{eqn:29}
	\{\xi_t^{0, \bbT_0^-, \lambda'} \neq \emptyset \; ; \;\; t \geq 0\}
\end{equation}
occurs with positive probability. But on this event, by~\eqref{eqn:28} and monotonicity, at all times $t \geq 0$ there will be a vertex $x \in \xi_t^{0, \bbT_0^-, \lambda'}$, from which there is at least $\delta$ probability of reinfecting the origin within $T_x$ time. It follows then from the Markov property that the probability of \eqref{eqn:29} and the origin being infected only finitely many times is $0$. Consequently $\bbP(\Omega_l^{0, \bbT_0^-, \lambda'}) > 0$ which implies $\lambda_l(\bbT_0^-) < \lambda'$ and since $\lambda'$ is arbitrarily close to $\lambda_l(\bbT)$, inequality~\eqref{eqn:20.1} follows.
\end{proof}

For $x \in \bbT$ and $y \in \bbT_x$ we let $\bbT_{xy} := (\bbT_x \setminus \bbT_y) \cup \{y\}$. Then we have,
\begin{lem}
\label{lem:TxTxyThresholds}
Fix $d \geq 3$. Let $x \in \bbT$ and $y \in \bbT_x$. Then 
\begin{enumerate}
\item $\lambda_g(\bbT_{x}^-) = \lambda_g(\bbT_{xy}^-) = \lambda_g(\bbT) = 1$.
\item $\lambda_l(\bbT_{x}^-) = \lambda_l(\bbT_{xy}^-) = \lambda_l(\bbT)$.
\end{enumerate}
where for the boundary conditions, both $\bbT_x$ and $\bbT_{xy}$ are treated as subgraphs of $\bbT$.
\end{lem}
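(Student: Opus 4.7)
The plan is to match two upper bounds from Lemma~\ref{lem:T0Thresholds} with two lower bounds from monotonicity.

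First, for the upper bounds, I would note that both $\bbT_x^-$ and $\bbT_{xy}^-$ contain a copy of $\bbT_0$ joined to the rest of the graph only through its root, so Lemma~\ref{lem:T0Thresholds} applies. For $\bbT_x^-$ this copy is $\bbT_x$ itself, attached to the sole boundary vertex (the parent of $x$ in $\bbT$) only via its root $x$. For $\bbT_{xy}^-$, invoke $d\geq 3$: the root $x$ has $d-1\geq 2$ children in $\bbT_x$ and at most one lies on the unique path from $x$ to $y$; picking any other child $z$, the full subtree $\bbT_z\subseteq \bbT_x\setminus\bbT_y\subseteq \bbT_{xy}$ is a copy of $\bbT_0$ whose only $\bbT$-neighbor outside $\bbT_z$ is $x\in\bbT_{xy}$, so $\bbT_z$ is attached to the rest of $\bbT_{xy}^-$ only through $z$. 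In either case Lemma~\ref{lem:T0Thresholds} delivers $\lambda_g\leq 1$ and $\lambda_l\leq \lambda_l(\bbT)$.

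For the matching lower bounds I would run the graphical-representation coupling of subsection~\ref{sub:Graphical}. Let $\bbG\in\{\bbT_x,\bbT_{xy}\}$ and fix a finite $\xi_0\subseteq\bbG$. Using the same realization of $(\cN^\circ_{u,v},\cN^\bullet_{u,v})$ indexed over \emph{all} ordered edges of $\bbT$, I would construct both $\xi^{\xi_0,\bbG^-,\lambda}_\cdot$ via~\eqref{eqn:161} and $\xi^{\xi_0,\bbT,\lambda}_\cdot$ via~\eqref{eqn:16} and verify the pathwise inclusion
\[
\xi_t^{\xi_0,\bbG^-,\lambda}\ \subseteq\ \xi_t^{\xi_0,\bbT,\lambda}\qquad\text{for all }t\geq 0.
\]
Intuitively, the $-$ boundary in $\bbG^-$ is the most adverse condition for the infection: vertices of $\bbT\setminus\bbG$ in the full-tree dynamics may be $+$, in which case they refrain from healing and may even infect inward, whereas in $\bbG^-$ they are permanently $-$. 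From the inclusion, $\Omega_g^{\xi_0,\bbG^-}\subseteq \Omega_g^{\xi_0,\bbT}$ and, for any fixed $v\in\bbG$, $\Omega_l^{\xi_0,\bbG^-}\subseteq \Omega_l^{\xi_0,\bbT}$. Every $\lambda$ that kills global (resp.\ local) survival on $\bbT$ therefore also kills it on $\bbG^-$, forcing $\lambda_g(\bbG^-)\geq \lambda_g(\bbT)=1$ and $\lambda_l(\bbG^-)\geq \lambda_l(\bbT)$.

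The only step needing genuine verification is the pathwise inclusion, which reduces to an event-by-event case check on Poisson marks attached to edges crossing $\partial\bbG$ --- the only place where the two processes see different inputs. This is essentially the attractiveness property of WB extended to boundary conditions, which the paper already flags as ``obvious'' right after~\eqref{monotonicity}; it is the main (mild) obstacle but should be routine.
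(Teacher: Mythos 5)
Your argument matches the paper's proof: the lower bounds on the thresholds come from monotonicity (the graphical-representation coupling giving $\xi_t^{\xi_0,\bbG^-}\subseteq\xi_t^{\xi_0,\bbT}$), and the upper bounds from Lemma~\ref{lem:T0Thresholds} applied to a copy of $\bbT_0$ hanging off only its root. Your extra step of exhibiting such a copy inside $\bbT_{xy}^-$ via a child $z$ of $x$ off the $x$--$y$ path (using $d\geq 3$) is a useful explicitation of a claim the paper leaves implicit, but the route is the same.
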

\begin{proof}
Monotonicity implies that
\begin{equation}
  \lambda_g(\bbT_{xy}^-) \geq \lambda_g(\bbT_{x}^-) \geq \lambda_g(\bbT) = 1
	\quad \text{and} \quad
  \lambda_l(\bbT_{xy}^-) \geq \lambda_l(\bbT_{x}^-) \geq \lambda_l(\bbT).
\end{equation}
On the other hand, both $\bbT_{x}^-$ and $\bbT_{xy}^-$ contain a copy of $\bbT_0$ which is connected to the rest of the graph only though its root. Therefore the opposite inequalities are a consequence of Lemma~\ref{lem:T0Thresholds}.
\end{proof}

\begin{proof}[Proof of Lemma~\ref{lem:Zhang}]

Fix $\lambda > \lambda_l(\bbT)$. By the duality relation~\eqref{eqn:201} showing \eqref{eqn:13} is equivalent to showing
\begin{equation}
\label{eqn:13'}
  \inf_{t \geq 0} \bbP(0 \in \xi^{0, \bbT_{0}^-}_t) > 0 \,.
\end{equation}
We first argue that there exists $r > 0$, $s > 0$ and 
\begin{equation}
\label{eqn:30}
	p > 1/\sqrt{d-1}
\end{equation}
such that for any vertex $x$ of $0$ whose distance from $0$ is $r$ we have 
\begin{equation}
\label{eqn:BeatingSqrtD}
  \bbP(\xi^{0, \bbT_{0x}^-}_{s} \ni x) > p^r \,.
\end{equation}
Indeed, by Lemma~\ref{lem:TxTxyThresholds} we know that $\lambda > \lambda_l(\bbT_{0}^-) = \lambda_l(\bbT)$. Therefore, we may find $p' > 1/\sqrt{d-1}$ and integer $r' > 0$ large enough such that
\begin{equation}
  (p')^{r'} < \tfrac12 \bbP \big(\Omega_l^{0, \bbT_{0}^-} \big) \,.
\end{equation}
Enumerating the vertices on some path going down from $0$ as 
\begin{equation}
\label{eqn:33}
	0 = x_0, x_1, x_2, \dots \,, 
\end{equation}
there exists $s' > 0$ such that
\begin{equation}
	\bbP \big(\tau^{0, \bbT_{0}^-}_{x_{r'}} \leq s' \big) 
	  > (p')^{r'} \,.
\end{equation}
Using monotonicity and the Markov property we may iterate the above to get for all $k \geq 1$
\begin{equation}
	\bbP \big(\tau^{0, \bbT_{0 x_{k r'}}^-}_{x_{k r'}} \leq k s' \big) =
	\bbP \big(\tau^{0, \bbT_{0}^-}_{x_{k r'}} \leq k s' \big) 
	  > (p')^{k r'} \,.
\end{equation}
Now, write
\begin{equation}
\begin{split}
(p')^{kr'} e^{-d} 
	& <
  		\bbP \big(\tau^{0, \bbT_{0 x_{k r'}}^-}_{x_{k r'}} \leq k s' 
  		\text{ and } \xi_t^{0, \bbT_{0x_{kr'}}^-} \ni x
  		\text{ for all } t \in [\tau^{0, \bbT_{0 x_{k r'}}^-}_{x_{k r'}} ,\,
  			\tau^{0, \bbT_{0 x_{k r'}}^-}_{x_{k r'}} + 1] \big) \\
  	& \leq 
  		\sum_{t=0,1,\dots}^{\lfloor ks' + 1 \rfloor} 
  			\bbP(\xi_t^{0, \bbT_{0 x_{kr'}}^-} \ni x_{k r'}) 
  	   \, \leq \, (ks'+2) \bbP(\xi_{s''}^{0, \bbT_{0 x_{kr'}}^-} \ni x_{k r'})\,,
\end{split}
\end{equation}
where $s''$ is the index of the maximal term in the last sum. Therefore (for $k$ large enough),
\begin{equation}
	\bbP (\xi_{s''}^{0, \bbT_{0 x_{kr'}}^-} \ni x_{k r'})
		> C/(k s') (p')^{kr'}\,,
\end{equation}
which implies that \eqref{eqn:BeatingSqrtD} holds with $r := kr'$, $s: = s''$, $x = x_r$ 
and some $p \in (1/\sqrt{d-1},\, p')$, once we choose $k$ large enough. Finally, since the choice of path in \eqref{eqn:33} is arbitrary, $x_r$ can be replaced with any vertex whose distance from $0$ is $r$. 

Next we introduce a modified version of $\xi_\cdot^{0, \bbT_{0}^-}$ which we denote by $\xi'_\cdot$. The process $\xi'_\cdot$ is still Markovian and takes values in the space of all configurations on $\bbT_0$. It starts from a single infection at the origin and evolves exactly as $\xi_\cdot^{0, \bbT_{0}^-}$ does, only that at times $t=ks$ where $k=1, \dots$, we heal all vertices whose distance from the origin is greater than $kr$ and we heal and keep healed the ones whose distance from the origin is less than $kr$. Formally, we set
\begin{equation}
\begin{split}
	\xi'_{ks}(x) & = -  \quad \text{for } x \text{ s.t. } \rho(0,x) > kr \,, \\
	\xi'_t(x) & = -  \quad \text{for } x \text{ s.t. }\rho(0,x) < kr \text{ and all } t \geq ks \,.
\end{split}
\end{equation}
By monotonicity $\xi^{0, \bbT_0^-}_t$ stochastically dominates $\xi'_t$ for all $t$. At the same time, it is easy to see that the process $Z_k := |\xi'_{t_k}|$ where $k=0, \dots, $ is a branching process with mean reproduction $\mu := \bbE Z_1 = (d-1)^r p^r > (d-1)^{r/2} > 1$ and $\bbE (Z_1)^2 < \infty$. Therefore, there exists $\epsilon > 0$ such that for all $k$ 
\begin{equation}
\label{eqn:39}
	\bbP \Big(|\xi_{ks}^{0, \bbT_0^-} \cap \rmS_0(kr) | >  \tfrac12 ((d-1)p)^{kr} \Big)
		\geq \bbP \big( Z_k > \tfrac12 \mu^k \big) > \epsilon \,,
\end{equation}
where we recall $\rmS_x(r) := \{y \in \bbT :\: \rho(x,y) = \lfloor r \rfloor \}$ for $x \in \bbT$ and $r \geq 0$.

Now for $k$ large enough such that the above holds and whose precise value will be chosen later, 
set $\tilde{s}:= ks$, $\tilde{r} := kr$ and $\tilde{p} := p^{\tilde{r}}$. We now prove by induction that for all positive and even $n \in \bbN$
\begin{equation}
\label{eqn:40}
	p_n := \bbP \big( \xi_{n\tilde{s}}^{0, \bbT_0^-} \ni 0 \big) > \tfrac12 \epsilon 
		\tilde{p} \,.
\end{equation}
For $n=2$, iterating~\eqref{eqn:BeatingSqrtD} $k$ times along the path from
$x \in \rmS_0(\tilde{r})$ to $0$ and using~\eqref{eqn:39},
\begin{equation}
\label{eqn:41}
  p_2 \geq \bbP \Big( |\xi_{\tilde{s}}^{0, \bbT_0^-} \cap \rmS_0(\tilde{r}) | \geq 1 \Big)
  	\bbP \Big( \xi_{2\tilde{s}}^{0, \bbT_0^-} \ni 0 \,\, \Big| \,\, 
  		|\xi_{\tilde{s}}^{0, \bbT_0^-} \cap \rmS_0(\tilde{r}) | \geq 1 	\Big)
  	\geq \epsilon \tilde{p} 
\end{equation}
For $n+2 > 2$, we can bound $p_{n+2}$ below by
\begin{equation}
\label{eqn:42}
\begin{split}
	\bbP \Big( |\xi_{\tilde{s}}^{0, \bbT_0^-} & \cap \rmS_0(\tilde{r}) |
		>  \tfrac12 ((d-1)p)^{\tilde{r}} \Big) \\
	& \times 
		\bbP \Big( |\xi_{(n+1) \tilde{s}}^{0, \bbT_0^-} \cap \rmS_0(\tilde{r}) | \geq 1 
		\, \Big | \,
		|\xi_{\tilde{s}}^{0, \bbT_0^-} \cap \rmS_0(\tilde{r}) | > \tfrac12 ((d-1)p)^{\tilde{r}} \Big) \\
	& \times
			\bbP \Big( \xi_{(n+2) \tilde{s}}^{0, \bbT_0^-} \ni 0
			\, \Big | \,
			|\xi_{(n+1)\tilde{s}}^{0, \bbT_0^-} \cap \rmS_0(\tilde{r}) | \geq 1 ,\,
			|\xi_{\tilde{s}}^{0, \bbT_0^-} \cap \rmS_0(\tilde{r}) | > \tfrac12 ((d-1)p)^{\tilde{r}} \Big) \,.
\end{split}
\end{equation}
We can bound below the first term by $\epsilon$ using~\eqref{eqn:39} and the last term by $\tilde{p}$ using the argument in~\eqref{eqn:41}. For the middle one, we use the fact that for any configuration $\eta$ on $\bbT_0$
\begin{equation}
	\xi_t^{\eta, \bbT_0^-} \geq 
		\bigvee_{x \in \rmS_0(\tilde{r})} \xi_t^{\eta_x, \bbT_x^-} \mbox{ for all }t\geq 0,
\end{equation}
where $\big( \xi^{\eta_x, \bbT_x^-}_\cdot :\: x \in \rmS_0(\tilde{r}) \big)$ are independent, $\eta_x$ is the restriction of $\eta$ to $\bbT_x$. This is because of monotonicity~\eqref{monotonicity}, the tree-structure and the choice of boundary conditions. Using this and the induction hypothesis for $p_n$, we can bound below the second term in~\eqref{eqn:42} by 
\begin{equation}
	1 - (1-p_n)^{\tfrac12 ((d-1)p)^{\tilde{r}}} 
	\geq 1 - \exp \big(-\tfrac14 \epsilon ((d-1)p^2)^{kr} \big) \,.
\end{equation}
In light of~\eqref{eqn:30}, by choosing $k$ large enough we can guarantee that the right hand above is at least $\tfrac12$ and conclude that~\eqref{eqn:41} holds for $p_{n+2}$ as well.

Once we have~\eqref{eqn:41} for all even positive $n$, it is only a short step to complete the proof of the lemma. Indeed, for any $t \geq 0$, find a positive even $n$ such that
$n\tilde{s} \leq t < (n+2) \tilde{s}$ and write
\begin{equation}
	\bbP \bigr(\xi_t^{0, \bbT_0^-} \ni 0 \bigl) \, \geq \,
		\bbP \bigl(\xi_{n\tilde{s}}^{0, \bbT_0^-} \ni 0 \bigr) \,
			\bbP \bigl(\cN_{x,0}^\circ([n\tilde{s}, t]) = 0 \text{ for all } x \sim 0 \bigr)
		> \tfrac12 \epsilon \tilde{p} e^{-d} > 0 \,.
\end{equation}
This shows~\eqref{eqn:13'} and completes the proof.
\end{proof}

\subsection{Proof of Lemma~\ref{lem:hitting}}
\label{sub:Hitting}
The proof will consist of a sequence of lemmas.

\begin{lem}
\label{lem:DistanceBound}
Fix $d \geq 3$ and $\lambda \geq 1$. For all $\delta_1 > 0$ there exists $b > 0$ such that for all $t \geq 0$ large enough.
\begin{equation}
	\bbP (\hat{\xi}_t^{0, \bbT} \subseteq \rmB_0(bt) ) \geq 1-e^{-\delta_1 t} \,.
\end{equation}
\end{lem}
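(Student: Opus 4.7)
The plan is to dominate the BCRW $\hat{\xi}_t^{0,\bbT}$ by a pure branching random walk (no coalescence) and then apply a first-moment/Chernoff argument. Coalescence in $\hat{\xi}_\cdot$ only removes particles, so coupling the two processes through the natural extension of the graphical representation gives a branching random walk $\eta_t$ on $\bbT$, starting from one particle at $0$, in which each particle moves to each neighbor at rate $1$ and gives birth at each neighbor at rate $\lambda-1$, and such that $\hat{\xi}_t^{0,\bbT} \subseteq \mathrm{supp}(\eta_t)$ for every $t$.

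For this BRW, write $N_t(x)$ for the number of particles at $x$ at time $t$, and $\mu_t(x) := \bbE N_t(x)$. A direct Kolmogorov forward computation on the tree gives
\begin{equation*}
\frac{\partial \mu_t(x)}{\partial t} \;=\; \lambda \sum_{y \sim x} \mu_t(y) \;-\; d\, \mu_t(x),
\end{equation*}
whose solution with initial data $\mu_0(x) = 1_{\{x=0\}}$ is the Many-to-One identity
\begin{equation*}
\mu_t(x) \;=\; e^{d(\lambda-1)t}\, \bbP(S_t = x),
\end{equation*}
where $S_t$ is a continuous-time simple random walk on $\bbT$ starting at $0$ whose total jump rate is $d\lambda$.

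The rest is a first-moment estimate. By Markov and the domination,
\begin{equation*}
\bbP\!\left( \hat{\xi}_t^{0,\bbT} \not\subseteq \rmB_0(bt) \right)
\;\leq\; \sum_{x :\, \rho(0,x) > bt} \mu_t(x)
\;=\; e^{d(\lambda-1)t}\, \bbP\!\left( |S_t| > bt \right).
\end{equation*}
Since $|S_t|$ is at most the total number $J_t$ of jumps of $S_\cdot$ by time $t$, and $J_t \sim \mathrm{Poisson}(d\lambda t)$, the standard Chernoff bound gives, for $b > d\lambda$,
\begin{equation*}
\bbP(|S_t| > bt) \;\leq\; e^{-I(b)\, t}, \qquad I(b) = b\log\!\tfrac{b}{d\lambda} - b + d\lambda,
\end{equation*}
and $I(b) \to \infty$ as $b \to \infty$. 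Choosing $b$ large enough that $I(b) > d(\lambda - 1) + \delta_1$ produces the bound $e^{-\delta_1 t}$ claimed in the lemma, for all sufficiently large $t$.

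There is no real obstacle here: the domination by a BRW is immediate from the graphical construction, the Many-to-One formula is standard and can be checked directly from the ODE, and the only quantitative step is picking $b$ large enough that the Poisson Chernoff rate for $|S_t|$ beats the exponential growth rate $d(\lambda-1)$ of the BRW by the required margin $\delta_1$.
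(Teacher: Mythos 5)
Your argument is correct. Both you and the paper dominate the BCRW by a branching random walk without coalescence and then run a first-moment estimate, but the way the moment estimate is executed is genuinely different. The paper first modifies the dominating process further: whenever a particle would move to or branch at its parent, that move/birth is redirected to a child, so the distance-from-origin process becomes a monotone one-dimensional branching random walk on $\bbZ_+$; the required large-deviations bound for its maximum is then obtained by citing a general theorem of Biggins on the growth of continuous-time branching random walks. You instead stay on the tree, write the Many-to-One identity $\mu_t(x) = e^{d(\lambda-1)t}\,\bbP(S_t = x)$ explicitly (which is easy to verify from the forward equation, as you do), bound $\rho(0,S_t)$ by the Poisson jump count, and apply the Poisson Chernoff bound. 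Your route is more elementary and self-contained --- it avoids invoking a blackbox LDP for branching random walks and avoids the redirection trick --- and it makes transparent exactly why $b$ can be chosen to beat the growth rate $d(\lambda-1)$ by any margin $\delta_1$; the paper itself remarks parenthetically that such an elementary argument is possible. The paper's route, on the other hand, would generalize more readily to settings where one already has or wants a sharp linear-speed result for the front of the BRW.
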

\begin{proof}
The proof will follow by coupling of $\hat{\xi}_t^{0, \bbT}$ with a (continuous time) branching random walk on $\bbR$, whose growth rate is well controlled. (Alternatively, one can use 
a comparison to last passage percolation on $\bbT$, or just prove this via elementary methods). To this end, we first introduce the following variant of $\hat{\xi}_\cdot^{0, \bbT}$ which we denote by $(\tilde{\xi}_t^{0, \bbT} :\: t\geq 0)$. The process $\tilde{\xi}_\cdot^{0, \bbT}$ starts from as single particle at $0$ and evolves as $\hat{\xi_\cdot}$ does, except for two differences. First, there are no coalescences, that is more than one particle can share a single vertex. Second, whenever a particle at vertex $v$ moves to (rate $1$) or produces a particle at (rate $\lambda - 1$) its parent $u \sim v$, one of its children $w \sim v$ are chosen (according to some fixed method) instead of $u$. 

If $\hat{R}^{0, \bbT}_t$, $\tilde{R}^{0, \bbT}_t$, denote the maximal graph-distance of a particle in $\hat{\xi}_t^{0, \bbT}$, resp. $\tilde{\xi}_t^{0, \bbT}$ from the origin, then by a straightforward coupling,
\begin{equation}
	\hat{R}^{0,\bbT}_t \leq_s \tilde{R}^{0,\bbT}_t \,.
\end{equation}
At the same time, the process $(N_t :\: t \geq 0)$ defined as
\begin{equation}
	N_t := \sum_{x \in \tilde{\xi}_t^{0, \bbT}} \delta_{\rho(0, x)} \,,
\end{equation}
is a continuous time branching random walk on $\bbR_+$ with $N_0 = \delta_0$ and whose reproduction measure on $\bbR$ is $\delta_1$ at rate $d$ and $\delta_1 + \delta_0$ at rate $(\lambda - 1)d$. (That is, a particle at displacement $r \geq 0$ is replaced by a particle at displacement $r+1$ at rate $d$ and by two particles: at $r$ and $r+1$, at rate $(\lambda-1)d$.)

Writing
\begin{equation}
\begin{split}
	\bbP(\hat{\xi}_t^{0, \bbT} \subseteq \rmB_0(bt)) & 
		= \bbP(\hat{R}^{0, \bbT}_t \leq bt) \geq \bbP(\tilde{R}^{0, \bbT}_t \leq bt) \\
		& = 1 - \bbP(N_t(bt, \infty) \geq 1) \geq 1 - \bbE(N_t(bt, \infty)) \,.
\end{split}
\end{equation}
Now Theorem 4 in \cite{biggins1995growth} says (note that  ``non-lattice'' there refers to the distribution of times between reproductions, not the support of the reproduction measures) 
\begin{equation}
	\frac{\log \bbE N_t[bt, \infty))}{t} \to \alpha^*(b) \,.
\end{equation}
where $\alpha^*(b)$ depends on the Laplace transform of the reproduction measures (an analog of the Legendre transform in Cramer's theorem) and in the case of reproduction measures with finite support and exponential reproduction times, can be made arbitrarily small by choosing $b$ large enough. 
\end{proof}

\begin{lem}
\label{lem:sizeofdual}
Let $d \geq 3$ and $\lambda > 1$. There exist $a > 0$, $\delta_2 > 0$ such that for all $t \geq 0$ large enough
\begin{equation}
\label{eq:growthrate}
\bbP \big( |\hat{\xi}^{0, \bbT}_t| \geq e^{at} \big) \geq 1-e^{-\delta_2 t}  \,.
\end{equation}
\end{lem}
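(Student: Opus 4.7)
The plan is to lower-bound $|\hat{\xi}_t^{0,\bbT}|$ by embedding a supercritical Galton--Watson process into the BCRW, exploiting the tree structure: descendants placed into disjoint sub-trees cannot coalesce. I would combine a one-block estimate at a fixed horizon $T$ with an iteration that restarts in disjoint sub-trees of $\bbT_0$.

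For the one-block estimate, use the path formula \eqref{eqn:18}: a vertex $y$ at graph-distance $r$ from $0$ lies in $\hat{\xi}_T^{0,\bbT}$ as soon as one can find $\bullet$-arrows at times $0<T_1<T_2<\cdots<T_r\le T$ along the consecutive edges of the unique geodesic from $0$ to $y$, together with no $\circ$-arrow on the ray at $y$ during $[T_r,T]$. Taking each $T_i$ to be the first $\bullet$-arrow on its edge after $T_{i-1}$ yields iid $\mathrm{Exp}(\lambda-1)$ increments; requiring $T_r\in[T-1,T]$ and fixing $r=\lfloor cT\rfloor$ with $c<\lambda-1$ gives $\bbP(y\in \hat{\xi}_T^{0,\bbT})\ge \rho>0$ uniformly over $y\in \rmS_0(r)$ for large $T$, hence $\bbE|\hat{\xi}_T^{0,\bbT}\cap \rmS_0(r)|\ge \rho(d-1)^{r-1}$, which is exponential in $T$. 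A Paley--Zygmund / second-moment argument, in which covariances of the events $\{y\in \hat{\xi}_T\},\{y'\in \hat{\xi}_T\}$ are controlled through the independence of the geodesic increments after the common ancestor of $y,y'$, then upgrades this to $\bbP(|\hat{\xi}_T^{0,\bbT}\cap \rmS_0(r)|\ge m)\ge\varepsilon$ for some integer $m\ge 1$ with $m\varepsilon>1$.

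For the iteration, note that $\{\bbT_y : y\in \rmS_0(r)\}$ are pairwise disjoint and each connected to the rest of $\bbT$ only through $y$. By monotonicity and the Markov property applied at time $T$, the restrictions of $\hat{\xi}_{(k+1)T}^{0,\bbT}$ to these sub-trees stochastically dominate mutually independent copies of $\hat{\xi}^{y,\bbT_y^-}$ run for another $T$ time units. Iterating the one-block estimate produces a Galton--Watson process $(Z_k)$ with $Z_k\le |\hat{\xi}^{0,\bbT}_{kT}|$, whose offspring distribution has mean at least $m\varepsilon>1$ and is supported in $\{0,1,\dots,|\rmS_0(r)|\}$. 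Standard exponential concentration for supercritical Galton--Watson processes with bounded offspring (Athreya--Ney) then yields $\alpha',\delta'>0$ with $\bbP(Z_k<e^{\alpha' k})\le e^{-\delta' k}$; setting $t=kT$ gives \eqref{eq:growthrate}.

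The main obstacle is the single-block estimate, since the events $\{y\in\hat{\xi}_T^{0,\bbT}\}$ for different $y\in \rmS_0(r)$ are correlated through shared Poisson clocks on common-ancestor edges; fortunately, once the two descent cascades split at the common ancestor, the relevant clocks are disjoint and the covariance bound closes. An essentially equivalent route is to treat the first-arrival times of the cascade as a branching random walk on $\bbT_0$ with iid $\mathrm{Exp}(\lambda-1)$ edge weights, and invoke Biggins-type concentration for the speed of its leftmost particle.
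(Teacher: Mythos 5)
Your strategy is genuinely different from the paper's, but there are two concrete gaps. First, the one-block estimate is not correct as stated. In the graphical representation, a path realizing $\{y\in\hat{\xi}_T^{0,\bbT}\}$ must avoid $\circ$-holes not just on the terminal ray but on \emph{every} intermediate ray it occupies; equivalently, in the BCRW picture the cascade particle at an intermediate vertex must survive (not walk away, rate $d$, and not be absorbed) long enough to branch onward. This introduces a per-edge penalty of order $\frac{\lambda-1}{\lambda-1+d}$, so the probability that a single geodesic cascade reaches $y\in\rmS_0(\lfloor cT\rfloor)$ decays exponentially in $T$; it is certainly not uniformly bounded below by a constant $\rho$. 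For $\lambda$ near $1$ this is fatal: when $\lambda=1$, $\hat{\xi}^0_\cdot$ is a single random walk and $\bbP(y\in\hat{\xi}_T^0)$ is exponentially small for every $y$, so by continuity a uniform lower bound cannot hold for $\lambda$ slightly above $1$ either. Consequently neither the single-site lower bound nor the claim that $m\varepsilon>1$ follows for all $\lambda>1$; the cascade+second-moment route only works for $\lambda$ sufficiently large (or $\lambda>\lambda_l$, as in the proof of Lemma~\ref{lem:Zhang}), whereas the present lemma is stated for all $\lambda>1$.

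Second, even granting a supercritical Galton--Watson minorant $(Z_k)$ with $Z_k\le|\hat{\xi}^0_{kT}|$, the tail bound $\bbP(Z_k<e^{\alpha' k})\le e^{-\delta'k}$ you invoke is false: a supercritical GW process dies out with probability $q>0$, so $\bbP(Z_k<e^{\alpha'k})\ge q$ for all $k$. The embedding $Z_k\le|\hat{\xi}^0_{kT}|$ gives nothing when $Z_k=0$, so you only get $\bbP(|\hat{\xi}^0_{kT}|\ge e^{\alpha'k})\ge 1-q-e^{-\delta'k}$, which does not yield \eqref{eq:growthrate}. The paper circumvents exactly this issue by a different route: it applies the thinning duality \eqref{eqn:221} to convert the problem into one about $|\xi^0_t|$ conditioned on $\Omega_g^0$ (the conditioning probability $p=\bbP(\Omega_g^0)$ cancels against the thinning parameter), and then couples the jump chain of $|\xi^0_t|$ with a linear birth-and-death chain, using the non-amenability bound $|\partial_\bbT A|\ge(d-2)|A|$ to control jump rates. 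That argument works for all $\lambda>1$ and handles the extinction probability automatically. To salvage your GW approach you would need an additional ``regeneration'' argument showing that if the embedded GW dies, $\hat{\xi}^0_\cdot$ (which never dies) quickly spawns a fresh one -- at which point you have essentially re-derived the concentration you were assuming.
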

\begin{proof}
We shall omit the superscript $\bbT$ as all processes in this proof run on $\bbT$. By the thinning relationship \eqref{eqn:221} with initial state $\delta_0$, we can write for $p = \bbP(\Omega_g^0)$ and any $a > 0$,
\begin{equation}
\label{eq:thinsize}
	\bbP(|(\xi^0_t)^{(p)}| \geq e^{at}) 
		= \bbP (|\hat{\xi}^{(\delta_0^{(p)})}_t| \geq e^{at})
		= p \bbP (|\hat{\xi}^0_t| \geq e^{at}) \,.
\end{equation}

Since every infected vertex in $\xi^0_t$ stays infected in $(\xi^0_t)^{(p)}$ with probability $p$ independently of other vertices, it follows from Cramer's theorem applied to this sequence of Bernoulli$(p)$ random variables that
\begin{equation}
\begin{split}
	\bbP(|(\xi^0_t)^{(p)}| \geq e^{at}) 
		\, & \geq \, \bbP \big( |\xi^0_t| \geq \tfrac{2}{p} e^{at} \big) \,
			\bbP \big( |(\xi^0_t)^{(p)}| \geq e^{at} \big| |\xi^0_t| \geq \tfrac{2}{p} e^{at} \big)	 \\
		   & \geq \bbP \big( |\xi^0_t| \geq e^{a't} \big) (1 - e^{-C'e^{Cat}})  \,
\end{split}
\end{equation}
where $a' = 2a$  and $t \geq 0$ is large enough. Therefore it is enough to show that for some $\delta > 0$ and all large $t$,
\begin{equation}
\label{eqn:65}
	p^{-1} \bbP \big( |\xi^0_t| \geq e^{a't} \big) \geq 
		\bbP \big( |\xi^0_t| \geq e^{a't} \, \big| \Omega_g^0 \big) \geq 1 - e^{-\delta t} \,.
\end{equation}

As discussed in the introduction (see discussion above~\eqref{eqn:303}), the transitions of $|\xi^0_\cdot|$ are those of a nearest neighbor random walk with drift $\frac{\lambda - 1}{\lambda+1}$ and an absorbing state at $0$. These transitions occur at rate 
\begin{equation}
	(\lambda + 1) |\partial_{\bbT} \xi^0_t| \geq (\lambda + 1) |\xi^0_t| \,,
\end{equation}
where we recall that $\partial_{\bbT} \xi^0_t$ denotes the set of edges of $\bbT$ with exactly one vertex in $\xi^0_t$ and the last inequality holds since $|\partial_\bbT A| \geq (d-2) |A|$ for any finite $A \subset \bbT$.

It follows that we can couple $(|\xi^0_t| :\: t\geq 0)$ with a continuous time birth-and-death process $(Y_t :\: t \geq 0)$ on $\bbN$ with birth rates $p(y) = \lambda y$ and death rates $q(y) = y$ such that both processes start from $1$, make the same transitions and that times between successive transitions of $|\xi^0_\cdot|$ are less or equal than the corresponding ones of $Y_\cdot$. Thus, if we define
\begin{equation}
T := \inf \{t \geq 0 :\: |\xi^0_t| \geq 2e^{a' t} \} 
\text{ and }
S := \inf \{t \geq 0 :\: Y_t \geq 2e^{a' t} \} 
\end{equation}
it follows from this coupling that
\begin{equation}
\label{eqn:60}
	\bbP(T \leq t \,|\, \Omega^0_g) \geq \bbP(S \leq t \,|\, Y_t > 0 ,\,t \geq 0) \,.
\end{equation}
For $Y_t$, either explicit calculation or e.g. \cite{BDchain} shows that there exist $a' > 0$ and $\delta' > 0$ such that
\begin{equation}
\label{eqn:61}
	\bbP( S \leq t \,|\, Y_t > 0 ,\,t \geq 0) \, \geq \,
		\bbP(Y_t \geq 2e^{a' t} \,|\, Y_t > 0 ,\,t \geq 0)
			\geq 1 - e^{-\delta' t} \,.
\end{equation}
At the same time, 
\begin{equation}
\begin{split}
\label{eqn:62}
	\bbP \big(|\xi^0_t| > e^{a't} \,\big|\, \Omega^0_g, T \leq t \big)
	& \geq \bbP(|\xi^0_{T+s}| - |\xi^0_T| > -e^{a't} ,\, s \geq 0 \, \big| \, \Omega^0_g, T \leq t) \\
	& \geq \bbP(|\xi^0_{T+s}| - |\xi^0_T| > -e^{a't} ,\, s \geq 0 \, \big| \, T \leq t) 
	= 1-\lambda^{-e^{a't}} \,,
\end{split}
\end{equation}
where $1-\lambda^{-e^{a't}}$ is the standard gambler-ruin probability.

Combining~\eqref{eqn:60},~\eqref{eqn:61} and~\eqref{eqn:62} we arrive to
\begin{equation}
	\bbP \big(|\xi^0_t| > e^{a't} \,\big|\, \Omega^0_g \big) 
		\geq 1 - e^{-\delta' t} - \lambda^{-e^{a't}} \geq 1 - e^{-\delta t} \,,
\end{equation}
for a suitable $0 < \delta < \delta'$, as required in~\eqref{eqn:65}. The result follows.
\end{proof}

As an immediate consequence of Lemma~\ref{lem:DistanceBound} and Lemma~\ref{lem:sizeofdual} we get
\begin{lem}
\label{lem:Congestion}
Fix $d \geq 3$ and $\lambda > 1$. There exists $\delta > 0$, $a > 0$, $b > 0$ such that for all $t \geq 0$ large enough
\begin{equation}
\label{eqn:47}
	\bbP \big( \big| \hat{\xi}_t^{0, \bbT} \cap \rmB_0(bt) \big| \geq e^{at} \big)
		\geq 1 - e^{-\delta t} \,.
\end{equation}
\end{lem}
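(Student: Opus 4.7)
The plan is to combine Lemmas~\ref{lem:DistanceBound} and~\ref{lem:sizeofdual} via a simple union bound. The key observation is that on the event $\{\hat{\xi}_t^{0, \bbT} \subseteq \rmB_0(bt)\} \cap \{|\hat{\xi}_t^{0,\bbT}| \geq e^{at}\}$ we automatically have $|\hat{\xi}_t^{0, \bbT} \cap \rmB_0(bt)| = |\hat{\xi}_t^{0, \bbT}| \geq e^{at}$, so it suffices to show that the intersection holds with probability at least $1 - e^{-\delta t}$ for some $\delta > 0$ and all $t$ large enough.

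The concrete steps I would carry out are as follows. First, invoke Lemma~\ref{lem:sizeofdual} to obtain constants $a > 0$ and $\delta_2 > 0$ such that $\bbP(|\hat{\xi}_t^{0,\bbT}| \geq e^{at}) \geq 1 - e^{-\delta_2 t}$ for all $t$ large enough. Second, apply Lemma~\ref{lem:DistanceBound} with the choice $\delta_1 := \delta_2$ (any fixed positive number works) to obtain a constant $b > 0$ such that $\bbP(\hat{\xi}_t^{0,\bbT} \subseteq \rmB_0(bt)) \geq 1 - e^{-\delta_1 t}$ for all $t$ large enough. Third, apply the union bound to the complements:
\begin{equation}
\bbP\big(|\hat{\xi}_t^{0,\bbT} \cap \rmB_0(bt)| \geq e^{at}\big) \;\geq\; 1 - e^{-\delta_1 t} - e^{-\delta_2 t} \;\geq\; 1 - 2e^{-\delta_2 t}.
\end{equation}
Finally, absorb the factor of $2$ into the exponent by choosing $\delta := \delta_2/2$, so that for all $t$ large enough $2e^{-\delta_2 t} \leq e^{-\delta t}$, yielding the conclusion with the triple of constants $(a, b, \delta)$.

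There is no genuine obstacle here; the lemma is an immediate corollary of the two preceding lemmas, and the only minor point of care is to make sure the same realization of the process is being used in both bounds (which is automatic since both refer to $\hat{\xi}^{0,\bbT}_t$ on a common probability space) and to pass from the $1 - 2e^{-\delta_2 t}$ bound to the cleaner $1 - e^{-\delta t}$ form. The real content of this estimate lies in Lemma~\ref{lem:sizeofdual}, where the thinning relation~\eqref{eqn:221} combined with the birth-and-death process comparison for $|\xi^0_t|$ yields exponential growth of the dual; the present lemma simply packages that growth together with the linear speed bound of Lemma~\ref{lem:DistanceBound} into a single statement about congestion in a linearly growing ball.
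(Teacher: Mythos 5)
Your proof is correct and is exactly the paper's argument: the paper's own proof is the one-line remark ``Use Lemma~\ref{lem:DistanceBound} and Lemma~\ref{lem:sizeofdual} and the union bound,'' which is precisely the union-bound-on-complements computation you carry out, with the absorption of the constant $2$ into a smaller $\delta$ being a routine cleanup.
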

\begin{proof}
Use Lemma~\ref{lem:DistanceBound} and Lemma~\ref{lem:sizeofdual} and the union bound.
\end{proof}

The next lemma shows that when $\lambda > \lambda_l(\bbT)$, starting from a single occupied vertex $x$, any neighboring vertex $y\sim x$ will eventually become occupied, with high probability, even if we restrict the underlying graph to a finite sub-set of $\bbT$, but as long as this sub-graph is large enough.
\begin{lem}
\label{lem:Commute}
Fix $d \geq 3$ and $\lambda > \lambda_l(\bbT)$. Let $x$ and $y$ be neighboring vertices in $\bbT$.
For all $\beta > 0$, there exists $r > 0$, $u > 0$ such that
\begin{equation}
\label{eqn:48}
	\bbP \big ( \hat{\tau}_y^{x, \rmB_x^-(r)} \leq u \big) \geq e^{-\beta} \,,
\end{equation}
where $\rmB_x(r)$ is the ball of radius $r$ (in the graph-distance) around $x$, viewed as a sub-graph of $\bbT$.
\end{lem}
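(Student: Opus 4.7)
The plan is to use Lemma~\ref{lem:Zhang} (via Lemma~\ref{lem:T0Thresholds}) to obtain a uniform-in-$t$ lower bound on the BCRW occupation of $x$ on the infinite subtree $\bbT_x^-$, then convert this into a positive probability of hitting the neighbor $y$ via a short-window argument, and finally transfer the bound from $\bbT_x^-$ to the finite ball $\rmB_x^-(r)$ by appealing to a monotone strengthening of Lemma~\ref{lem:DistanceBound}. First, by Lemma~\ref{lem:T0Thresholds} we have $\lambda>\lambda_l(\bbT_x^-)$, and since $\bbT_x^-$ is isomorphic to $\bbT_0^-$, Lemma~\ref{lem:Zhang} yields a constant $c_0>0$ with $\bbP(x\in\hat{\xi}_t^{x,\bbT_x^-})\ge c_0$ for every $t\ge 0$.

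Next, for any fixed $h>0$, I would condition on $\{x\in\hat\xi_s^{x,\bbT_x^-}\}$ at an arbitrary time $s$ and apply the Markov property: a direct computation with the independent Poisson clocks on the edges incident to $x$ shows that the probability that in $[s,s+h]$ the \emph{only} event on those edges is one on $\{x,y\}$ itself is at least a positive constant $\delta(h)>0$ depending only on $d,\lambda,h$. On this event the particle at $x$ survives until the $\{x,y\}$-clock rings, at which point a particle is deposited at $y$, and so $\hat\tau_y\le s+h$. Thus $\bbP(\hat\tau_y^{x,\bbT_x^-}\le s+h)\ge c_0\,\delta(h)$ uniformly in $s$. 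To boost this constant up to $e^{-\beta}$ for small prescribed $\beta$, I would iterate the argument along a sequence of stopping times $S_0<S_1<\cdots$, defining $S_k$ to be the first time after $S_{k-1}+h$ at which $x$ is again occupied (Lemma~\ref{lem:Zhang} ensures that with probability $\ge c_0$ each $S_k$ is bounded by a deterministic constant); by the strong Markov property each window $[S_k,S_k+h]$ offers a conditional chance $\ge\delta(h)$ to hit $y$, and a standard renewal bookkeeping compounds these probabilities geometrically in the number of attempts.

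Finally, to pass from $\bbT_x^-$ to $\rmB_x^-(r)$ I would use the observation that the dominating branching random walk $\tilde\xi$ in the proof of Lemma~\ref{lem:DistanceBound} only moves particles outward, hence has non-decreasing maximal displacement; this upgrades Lemma~\ref{lem:DistanceBound} to
\[
\bbP\bigl(\hat\xi_s^{x,\bbT_x^-}\subseteq\rmB_x(r)\text{ for all }s\le u\bigr)\ge 1-e^{-\delta u}
\]
whenever $r\ge bu$. On this event the graphical-representation coupling identifies $\hat\xi_\cdot^{x,\bbT_x^-}$ and $\hat\xi_\cdot^{x,\rmB_x^-(r)}$ throughout $[0,u]$, so the lower bound transfers to $\rmB_x^-(r)$ with only an exponentially small loss; choosing $u$ and $r\ge bu$ large enough then yields the claim. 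The main technical obstacle is the amplification step: the successive ``attempts'' in different windows are not literally independent, since their failure is positively correlated through the global dynamics of the BCRW, so the renewal argument needs a careful choice of the stopping times $S_k$ together with the uniform-in-$t$ bound from Lemma~\ref{lem:Zhang} to repeatedly replenish the supply of times at which $x$ is occupied.
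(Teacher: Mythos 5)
Your proposal has a fundamental structural problem: you work on $\bbT_x^-$, but the target vertex $y$ is typically the \emph{parent} of $x$ (as the paper assumes without loss of generality), which lies in $\bbT \setminus \bbT_x$ and therefore carries the $-$ boundary condition. On $\bbT_x^-$ particles reaching $y$ simply vanish, so $\hat{\xi}_t^{x,\bbT_x^-}$ never contains $y$ and $\hat{\tau}_y^{x,\bbT_x^-}=\infty$ identically. The intermediate bound $\bbP(\hat{\tau}_y^{x,\bbT_x^-}\le s+h)\ge c_0\delta(h)$ is therefore vacuous as stated. Relatedly, the final transfer step compares $\hat{\xi}_\cdot^{x,\bbT_x^-}$ to $\hat{\xi}_\cdot^{x,\rmB_x^-(r)}$, but neither graph contains the other ($\rmB_x(r)$ extends past $y$, $\bbT_x$ extends past radius $r$), so no coupling ``identifies'' the two processes; containment of $\hat{\xi}^{x,\bbT_x^-}$ in $\rmB_x(r)$ only yields a comparison with the BCRW on $(\rmB_x(r)\cap\bbT_x)^-$, which still excludes $y$.

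There is a second, independent gap in the amplification step, which you yourself flag but do not resolve. Lemma~\ref{lem:Zhang} controls the \emph{marginal} occupation probability $\bbP(x\in\hat{\xi}_t^{x,\bbT_x^-})$ at each fixed $t$; it does not imply that $x$ is re-occupied within a bounded time after a stopping time $S_{k-1}+h$. Indeed, the BCRW on $\bbT_x^-$ can die out entirely (particles are killed at the boundary), with probability bounded away from $0$, and once it dies $x$ is never occupied again. Consequently the renewal bookkeeping cannot drive the failure probability below the (positive) extinction probability, and the target bound $e^{-\beta}$ for small $\beta$ is out of reach by this route. The paper avoids both obstacles by working on the full tree $\bbT$ without boundary conditions: there the BCRW started from a non-empty configuration never dies (particles only merge, never disappear), so at every time there is \emph{some} occupied vertex $z$, and local survival gives a uniform positive probability of reaching $y$ from $z$ in bounded time. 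This yields $\bbP(\hat{\tau}_y^{x,\bbT}<\infty)=1$, and then a monotone-convergence argument — approximating the event $\{\hat{\tau}_y^{x,\bbT}<\infty\}$ from within by $\{\hat{\tau}_y^{x,\bbT}\le u,\ \hat{\xi}_t\subseteq\rmB_x(r-1)\text{ for }t\le\hat{\tau}_y\}$ — produces the finite $(r,u)$ and transfers cleanly to $\rmB_x^-(r)$, which does contain $y$.
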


\begin{proof}
Fix $\lambda > \lambda_l(\bbT)$. Without loss of generality, we can assume that $x = 0$ and that $y$ is the parent of $0$. We first show that 
\begin{equation}
\label{eqn:66}
	\bbP \big( \hat{\tau}_y^{0, \bbT} < \infty \big) = 1 \,.
\end{equation}
Indeed, since $\lambda > \lambda_l(\bbT)$, local survival and the duality relation imply that
for all $z \in \bbT$, there exists $T_z < \infty$ such that
\begin{equation}
	\bbP \big( \hat{\tau}_y^{z, \bbT} < T_z \big) \geq \tfrac12 \bbP(\Omega_l^{z, \bbT})
		=: \epsilon \,.
\end{equation}
where $\epsilon > 0$ is independent of $z$. Since $\hat{\xi}_\cdot^{0, \bbT}$ never dies, at any time $t \geq 0$, there will be at least one occupied vertex $z$, which (by monotonicity and the Markov property) will give rise to a particle at $y$ within $T_z$ time with probability at least $\epsilon$. It follows that the probability of $y$ never being occupied is $0$, which is what we need for~\eqref{eqn:66}.

Now since at all times $t$ we have $|\hat{\xi}^{0, \bbT}_t| < \infty$
\begin{equation}
	\big \{ \hat{\tau}_y^{0, \bbT} < \infty \big \} =
		\cup_{r=1}^\infty \cup_{u=1}^\infty 
			\{ \hat{\tau}_y^{0, \bbT} \leq u \,,\,\, \hat{\xi}_t \subseteq \rmB_0(r) \;;\; t \in [0,
				\hat{\tau}_y^{0, \bbT}] \big \}
\end{equation}
and as the sequence of events on the right hand side is monotone increasing in $(r,u)$ it follows that
\begin{equation}
	\lim_{r \to \infty} \lim_{u \to \infty}
			\bbP \big ( \hat{\tau}_y^{0, \bbT} \leq u \,,\,\, \hat{\xi}_t \subseteq \rmB_0(r) 
				\;;\; t \in [0,\hat{\tau}_y^{0, \bbT}] \big ) = 1 \,.
\end{equation}
As a consequence we get that for any $\beta > 0$ there exist $u > 0$, $r > 0$ large enough such that
\begin{equation}
	\bbP \big ( \hat{\tau}_y^{0, \bbT} \leq u \,,\,\, \hat{\xi}_t \subseteq \rmB_0(r-1) 
		\;;\; t \in [0,\hat{\tau}_y^{0, \bbT}] \big ) \geq e^{-\beta} \,,
\end{equation}
but the above event is equivalent to that in~\eqref{eqn:48} for $x=0$.
\end{proof}

\begin{proof}[Proof of Lemma~\ref{lem:hitting}]
Without loss of generality, we can assume that $x = 0$ and that $y$ is the parent of $0$ in $\bbT$.
Let $\alpha > 0$ be arbitrarily large and $\delta$, $a$, $b$ be given by Lemma~\ref{lem:Congestion}. Setting $a' := \alpha a \wedge 1$, $b' := \alpha b$, Lemma~\ref{lem:Congestion} implies that 
for all $s \geq 0$ large enough, the event
\begin{equation}
	\hat{\cA} := \big\{ \big| \hat{\xi}_{\alpha s}^{0, \bbT} \cap \rmB_0(b' s) \big| \geq e^{a' s} \big\}
\end{equation}
satisfies
\begin{equation}
\label{eqn:50}
	\bbP(\hat{\cA}) \geq 1-e^{-\alpha \delta s} \, .
\end{equation}

For any vertex $z \in \bbT$, let $\gamma_{z}$ denote the set of vertices on the unique path from $z$ to $y$. For $r > 0$ let $\Gamma_{z}(r) := \cup_{w \in \gamma_{z}} \rmB_w(r)$. With\begin{equation}
\label{eqn:51}
	\beta := \frac{a'}{5b'} \,.
\end{equation}
where $a'$ and $b'$ are as above and with $r$, $u$ given by Lemma~\ref{lem:Commute}, set
\begin{equation}
	\hat{\cB}^z := \big \{ \hat{\tau}_y^{z, \Gamma_{z}^-(r)} \leq 2 b'us \big \} \,.
\end{equation}
By iterating Lemma~\ref{lem:Commute}, if $z \in \rmB_0(b's)$ we have
\begin{equation}
\label{eqn:53}
	\bbP(\hat{\cB}^z) \geq 
	\bbP \big(\hat{\tau}_y^{z, \Gamma_{z}^-(r)} \leq (b's+1)u \big)
	\geq e^{-\beta (b's+1) } \geq e^{-2\beta b's} \,.
\end{equation}

Now suppose that $\hat{\cA}_t$ occurs and pick $z_{0} \in \hat{\xi}_{\alpha s}^{0, \bbT} \cap \rmB_0(b's)$. Henceforth, we shall assume that there is some fixed order among all vertices of $\bbT$ and that every time we arbitrarily pick a vertex from a subset of $\bbT$ we pick the minimal one with respect to this order. Notice that if the set from which we choose is random, the chosen vertex, e.g. $z_0$ above, is a random variable. 

For what is coming, it will be useful to employ the following notation. For any $t \geq 0$, we denote $\theta_t$ the ``shifting forward of time by $t$'', that is $\theta_t$ acts on the underlying sample space, such that
\begin{equation}
\begin{split}
	\cN^{\bullet}(\cdot)_{u,v} \circ \theta_t & = \cN^{\bullet}_{u,v}(\cdot + t) \,\\
	\cN^{\circ}(\cdot)_{u,v} \circ \theta_t & = \cN^{\circ}_{u,v}(\cdot + t) \,
\end{split}
\end{equation}
for any neighboring vertices $u \sim v$ of $\bbT$.

Then using \eqref{eqn:53}, the Markov property and monotonicity we may write
\begin{equation}
	\bbP \big( \hat{\cB}^{z_{0}} \circ \theta_{\alpha s} \, \big| \, \hat{\cA} \big )
		\geq e^{-2\beta b' s}.
\end{equation}
On $\hat{\cA} \cap \hat{\cB}^{z_{0}}$ we have $\hat{\tau}^{0, \bbT}_y < \alpha s + 2b's u$ and if this indeed happens, we stop.
If not, we pick $z_1 \in \hat{\xi}_{\alpha s}^{0, \bbT} \cap \rmB_0(b's) \setminus \Gamma_{z_0}(r)$ (which must exists, as argued below). Notice that by ``removing'' (from consideration) all vertices in $\Gamma_{z_0}(r)$, conditioned on $\hat{\cA}$ and the choice of $z_1$, 
the process 
$\big(\hat{\xi}_t^{z_1, \bbT_{z_1}^-} \circ \theta_{\alpha s} \: : \: t \geq 0 \big)$
is independent of $\hat{\cB}^{z_{0}} \circ \theta_{\alpha s}$. Therefore, Lemma~\ref{lem:Zhang} guarantees that there exists a universal $\epsilon > 0$ such that
\begin{equation}
\label{eqn:55}
	\bbP \big( \hat{\xi}_{\alpha s + 2b'us}^{z_1, \bbT_{z_1}^-} \ni z_1 \, \big| \,
		\hat{\cA}, (\hat{\cB}^{z_0} \circ \theta_{\alpha s})^{\rm c} \big) \geq
	\bbP \big( \hat{\xi}_{2b'us}^{z_1, \bbT_{z_1}^-} \ni z_1 \big) \geq \epsilon \,.
\end{equation}
The event in the first term can be written as $\hat{\cC}^{z_1} \circ \theta_{\alpha s + 2b'us} $ once we set for $z \in \bbT$,
\begin{equation}
	\hat{\cC}^z := \big \{ \xi_0^{z, \bbT_z^-} \ni z \} \,.
\end{equation}
If this happens, then by monotonicity $z_1 \in \hat{\xi}_{\alpha s + 2b'us}^{0, \bbT}$ and we now check whether $\hat{\cB}^{z_1} \circ \theta_{\alpha s + 2b'us}$ occurs. Since we are only conditioning on events that depend on (measurable w.r.t.) the process up to time $\alpha s + 2b'us$, by the Markov property we still have by virtue of\eqref{eqn:53}
\begin{equation}
\label{eqn:57}
	\bbP \big ( \hat{\cB}^{z_1} \circ \theta_{\alpha s + 2b'us} \, \big| \,
		\hat{\cA}, (\hat{\cB}^{z_0} \circ \theta_{\alpha s})^{\rm c}, \, \hat{\cC}^{z_1} \circ
			\theta_{\alpha s + 2b'us} \big) \geq e^{-2\beta b' s} \,.
\end{equation}
On the intersection of all the events in~\eqref{eqn:57}, then we have 
$\hat{\tau}_y^{0, \bbT} < \alpha s + 4b'us$. If either the event in~\eqref{eqn:55} or the event in~\eqref{eqn:57} fail, we pick a new vertex $z_2 \in \hat{\xi}_{\alpha s}^{0, \bbT} \cap \rmB_0(b's) \setminus \big(\Gamma_{z_0}(r) \cup \Gamma_{z_1}(r)\big)$.

Proceeding in this fashion we obtain vertices $z_0, z_1, \dots, z_n$, for $n$ to be defined later. Indeed, vertex $z_k$ is chosen from
\begin{equation}
\label{eqn:58}
	\hat{\xi}_{\alpha s}^{0, \bbT} \cap \rmB_0(b's) \setminus \cup_{0 \leq l < k} 
		\Gamma_{z_l}(r) \,,
\end{equation}
at time $\alpha s + 2kb'us$ we have (as in~\eqref{eqn:55},~\eqref{eqn:57})
\begin{equation}
\label{eqn:59}
	\bbP \Big( (\hat{\cC}^{z_k}, \hat{\cB}^{z_k})
				 \circ \theta_{\alpha s + 2kb'us}
				\Big | \hat{\cA}, \cap_{l < k} \big( 
					(\hat{\cC}^{z_l}, \hat{\cB}^{z_l})^{\rm c}
					\circ \theta_{\alpha s + 2lb'us} \big) \Big) \geq \epsilon e^{-2\beta b' s}\,.
\end{equation}
and if the above event occurs, then $\hat{\tau}_y^{0, \bbT} < \alpha s + 2(k+1) b'us$ and we stop.
To make sure that we do not ``run out'' of vertices in~\eqref{eqn:58}, observe that on $\hat{\cA}$
when we pick $z_k$,
\begin{equation}
	\big| \hat{\xi}_{\alpha s}^{0, \bbT} \cap \rmB_0(b's) \setminus \cup_{0 \leq l < k} 
		\Gamma_{z_l}(r) \big| 
		\geq e^{a' s} - k r^d (b's + 1) \,.
\end{equation}
Thus, choosing $n:= e^{a's/2}$ guarantees that the above is positive for all $k \leq n$.

Combining~\eqref{eqn:50}, \eqref{eqn:51} and~\eqref{eqn:59}, we obtain for all $s$ large enough,\begin{equation}
\begin{split}
	\bbP \big ( \hat{\tau}_y^{0, \bbT} < e^{a's/3} \big)
	& \geq \bbP \big ( \hat{\tau}_y^{0, \bbT} < \alpha s + 2(n+1) b'us \big) \\
	& \geq 
		\bbP \Big(\cup_{0 \leq l \leq n} 
			\big( (\hat{\cC}^{z_l}, \hat{\cB}^{z_l})
				\circ \theta_{\alpha s + 2lb'us} \big) \, \big| \hat{\cA} \Big) \, \bbP(\hat{\cA})  \\
	& \geq 1 - (1 - \epsilon e^{-2\beta b' s})^{n+1} - e^{-\alpha \delta s}
		\geq 1 - e^{-\epsilon e^{(a'/2 - 2\beta b') s}} -e^{-\alpha \delta s} \\
	& \geq 1 - Ce^{-\alpha \delta s} = 1 - C \big( e^{a's/3} \big)^{\frac{-3\alpha \delta}{a'}} \,.
\end{split}	
\end{equation}
Substituting $t:=e^{a's/3}$ and $q:=3\alpha \delta/a'$ we rewrite the above as
\begin{equation}
	\bbP(\hat{\tau}_y^{0, \bbT} > t \big) \leq C t^{-q} \,,
\end{equation}
for all $t$ large enough. But since $\alpha$ was arbitrarily, $\delta$ is fixed and independent of it and $a' \leq 1$ we can make $q$ is large as we want. This shows~\eqref{eqn:14}.
\end{proof}

\section{Proof of Proposition~\ref{prop:Bounds}}
\label{sec:ProofOfBounds}
Beginning with part~\ref{item:CBOL1}, the upper bound easily follows from a comparison with the contact process. Indeed, consider the contact process $(\zeta_t^{\zeta_0} :\: t \geq 0)$ on $\bbT^d$ starting from some $\zeta_0 \in \cX$, where each site heals independently at rate $d$ and infected sites infect their neighbors at rate $\lambda$. Employing a standard coupling (for instance, using the graphical representations of both models) we can have
\begin{equation}
	\zeta_t^{\zeta_0}(x) \leq \xi_t^{\zeta_0}(x)	\quad ;\; t \geq 0,\, x \in \bbT \,.
\end{equation}
Then local survival of $\zeta^{\zeta_0}_{\cdot}$ implies local survival of $\xi_\cdot^{\zeta_0}$. By Theorem 2.2 in \cite{pemantle1992contact}, this occurs with positive probability for $\zeta^{\zeta_0}_\cdot$ as soon as $\lambda > \min \big (2d ,\, \frac{4d}{(\sqrt{d-1}-4) \vee 0} \big)$. 

Before proving the lower bound in part~\ref{item:CBOL1}, let us first address part~\ref{item:CBOL2} of the proposition, as its proof is similar and at the same time slightly simpler. For $\alpha \geq 0$ consider the function $f : \cX \to \bbR_+$ given by
\begin{equation}
	f(\xi)=\sum_{x \in \xi}\alpha^{\rho(0,x)} \,.
\end{equation}
If $\alpha < (d-1)^{-1}$ this function is uniformly bounded. At the same time, we claim that 
if $\lambda^{-1} < \alpha < \lambda$ then $M_t^{\xi_0} := f \big( \xi_t^{\xi_0} \big)$ is a sub-martingale for all finite non-empty $\xi_0 \in \cX$ (this is still true even if $\xi_0$ is infinite, but our argument below needs to be modified in this case). Indeed, let $t \geq 0$ and consider the next transition of $\xi_\cdot^{\xi_0}$ after $t$. Such transition must be due to a pair of neighboring vertices $u \sim v \in \bbT$ such that $\xi_t^{\xi_0}(u) = +$ while $\xi_t^{\xi_0}(v) = -$, for which there is either an ``arrival'' of an infection event, whereby $u$ infects $v$, or an ``arrival'' of a healing event, whereby $v$ heals $u$ and in any case this arrival is the first to occur after time $t$. 

Conditioned on $u,v$ being the vertices for which one of these arrivals occurred first, the probability that it is an infection event is $\tfrac{\lambda}{\lambda+1}$, while the probability that it is a healing event is $\tfrac{1}{\lambda+1}$. There are two cases to consider: either $\rho(0, u)=\rho(0, v)+1$ or $\rho(0, u)= \rho(0, v)-1$. The expected change in $M^{\xi_0}_t$ due to this transition is then, respectively, 
\begin{equation}
	\alpha^{\rho(0, v)}
		\left( \frac{\lambda} {1+\lambda} - \frac{\alpha}{1+\lambda} \right) \ \text{ and } \ 
	\alpha^{\rho(0, u)}
		\left( \frac{\lambda \alpha}{1+\lambda} -\frac{1}{1+\lambda} \right) \,.
\end{equation} 
Both are non negative when $\lambda^{-1} < \alpha < \lambda$ and hence $M_\cdot^{\xi_0}$ is a sub-martingale.

If $\lambda > d-1$ we may find $\lambda^{-1} < \alpha < (d-1)^{-1}$ for which $(M_
t^{\xi_0} :\: t \geq 0)$ is a bounded sub-martingale and therefore must converge to some finite value with probability 1. Consequently, for any vertex $v$ there is some time $s_v$ after which it is either always infected or never infected; otherwise there would have been an unbounded sequence of times at which $M^{\xi_0}_t$ changed by $\alpha^{\rho(0,v)}$, a contradiction to the convergence. Furthermore, two neighboring vertices cannot have different limits and hence almost-surely $\xi_t^{\xi_0}$ converges either to $\bbT$ or $\emptyset$. The latter cannot happen once $\Omega_l^{\xi_0}$ occurs, but $\Omega_l^{\xi_0}$ occurs with positive probability when $\lambda > \lambda_l(\bbT)$. Thus, when $\lambda > (d-1) \vee \lambda_l(\bbT)$ then 
with positive probability $\xi_0^{\xi_0}$ converges to $\bbT$ and the process survives completely.

We now come back to the proof of the lower bound in part~\ref{item:CBOL1}. Let $u \in \bbT$ and set $w \in \bbT$ to be the first common ancestor of $u$ and $0$. Note that it might be $0$ or $u$. Now define the ``height'' of $u$ relative to $0$ as $h(u):= \rho(w, u) - \rho(w,0)$. Next, for $\alpha > 0$ let $f(u) := \alpha^{h(u)}$ and for a subset of vertices $U \subseteq \bbT$ we set $f(U) := \sum_{u \in U} f(u)$. Notice that $f(u)$ may be $\infty$. Finally, define the process $M_t := f(\xi_t^0)$. 

We would like to claim that as soon as $\alpha$ and $\lambda$ satisfy
\begin{equation}
\label{eqn:501}
    \lambda (d-1)\alpha^2 -d\alpha+\lambda \leq 0
\end{equation}
then $(M_t :\: t\geq 0)$ is a nonnegative super-martingale and therefore must have an almost-sure finite limit $M_\infty$. As before this will imply that $\xi_t^0$ converges to either $\emptyset$ or $\bbT$ with probability $1$. Since $\bbE M_\infty \leq \bbE M_0 < \infty$, it must be that $M_\infty$ is finite almost-surely, which is only possible if the convergence of $\xi_t^0$ is to $\emptyset$. Thus, the probability of local survival is $0$ and hence $\lambda$ is a lower bound for $\lambda_l(\bbT)$. It is not difficult to see that \eqref{eqn:501} has a solution with $\alpha > 0$ if and only if $\lambda \leq \tfrac{d}{2\sqrt{d-1}}$. This gives the lower bound in part~\ref{item:CBOL1}.

To see that once~\eqref{eqn:501} holds, $M_t$ is a super-martingale, we argue as before. Given $\xi^0_t$ for some $t \geq 0$, the expected change in $M_t$ at the next transition of $\xi^0_\cdot$ is
\begin{equation}
	\frac{1}{(\lambda + 1)|\partial_\bbT \xi^0_t|}
		\sum_{v \in \xi_t, u \notin \xi_t} \lambda f(u) - f(v) \,.
\end{equation}
It is therefore enough to argue that for any finite $U \subseteq \bbT$,
\begin{equation}
\label{eqn:502}
	\sum_{v \in U, u \notin U} \lambda f(u) - f(v) \leq 0 \,.
\end{equation}
and by linearity, we can also assume that $U$ is connected.

The proof of~\eqref{eqn:502} will follow by induction on the size of $U$. It is easy to check that~\eqref{eqn:501} is necessary and sufficient for~\eqref{eqn:502} to hold for $U = \{x\}$.
Now suppose that it holds for all $U$ with $|U| \leq n$ and let $U$ be a set with $n$ vertices.
Choose some leaf $w \in U$ and set $U' := U \setminus \{w\}$. From the induction hypothesis we know
\begin{equation}
    \sum_{v \in U', u \notin U'} \lambda f(u)-f(v) \leq 0 \quad \text{and} \quad
    \sum_{u \sim w}\lambda f(u)-f(w) \leq 0 \,.
\end{equation}
Adding the two inequalities and letting $w'$ denote the parent of $w$, we obtain
\[
    (\lambda-1)(f(w)+f(w')) + \sum_{v \in U, u \notin U}\lambda f(u)-f(v) \leq 0
\]
But since $\lambda \geq 1$, this implies that~\eqref{eqn:502} holds as desired.

\section{Proof of Theorem~\ref{thm:Automorphism}}
\label{sec:ProofOfAuto}
Starting with part~\ref{part:AU1}, for $u \sim v \in \bbT$ and $t \geq 0$, denote by $e_{t}^{+}(u,v)$ the number of times $v$ becomes infected because of $u$, up to time $t$ and let $E_{t}^{+}(u,v) := \bbE\, e_{t}^{+}(u,v)$. If $\bbP(\xi_0 \in \cdot) \in \cI$, the latter does not depend on the choice of $u$, $v$ and we shall therefore just write $E_{t}^{+}$. Similarly, define $e_{t}^{-}(u,v)$ as the number of times $v$ becomes healthy because of $u$, up to time $t$ and let $E^-_t = E^-_t(u,v) := \bbE\, e_{t}^{-}(u,v)$. Since for all $v \in \bbT$,
\begin{equation}
	\sum_{u: u \sim v}e_{t}^{+}(u,v) - \sum_{u: u\sim v}e_{t}^{-}(u,v)=\pm 1 \,,
\end{equation}
by taking expectations we see that $-\frac{1}{d}\leq E_{t}^{+} - E_{t}^{-}\leq \frac{1}{d}$. On the other hand, $E_{t}^{+}(u,v) = \lambda E_{t}^{-}(v,u)$, as every time $\xi_\cdot^{\xi_0}(u)=+$ but $\xi_\cdot^{\xi_0}(v)=-$ and an infection or healing event occurs along the edge $\{u,v\}$, the probability of an infection of $v$ by $u$ is larger by a factor of $\lambda$ than the probability of a healing of $u$ by $v$ (formally, this is just a simple martingale argument). Thus $(1-\lambda^{-1})E_{t}^{+} \leq 1/d$ and so $E_{t}^{+}$ is bounded by a constant which does not depend on $t$. 

Consequently, $e_\infty^+(u,v) := \lim_{t \to \infty} e_t^+(u,v)$, which exists by monotonicity, has a bounded expectation and therefore must itself be bounded almost-surely. The same holds for $e_\infty^-(u,v)$ and we see that the number of sign flips at any vertex $v$ must be finite almost surely, as desired.

Turning to part~\ref{part:AU2}. Let $x$, $y$ be two neighboring vertices in $\bbT$ and $t \geq 0$.
Define 
\begin{equation}
	\rho_t := \bbP(\xi^{\xi_0}_t(x) = +) \quad , \qquad
	\delta_t := \bbP(\xi^{\xi_0}_t(x) = +, \,
			\xi^{\xi_0}_t(y) = -) \,.	
\end{equation}
Clearly, the above does not depend on the choice of $x$, $y$ since the initial distribution is automorphism-invariant. We claim that for all $t \geq 0$.
\begin{equation}
\label{eqn:126}
	\frac{d \rho_t}{dt} = d(\lambda-1) \delta_t \,.
\end{equation}
To see this, fix some $t, h \geq 0$ and for $u \sim v \in \bbT$, let
\begin{equation}
	f(u,v) := \big[ e^+_{t+h}(u,v) - e^+_{t}(u,v) \big] \, - \,
			  \big[ e^-_{t+h}(u,v) - e^-_{t}(u,v) \big]
			  	\quad , \quad F(u,v) := \bbE\, f(u,v) \,.
\end{equation}
Since $\bbT$ is transitive and unimodular, by the mass-transport principle (see e.g, \cite{lyons2005probability}),
\begin{equation}
	 \sum_{u : u \sim v} F(u,v) \, = \, \sum_{v : v \sim u} F(u,v) \,.
\end{equation}
The r.h.s. above is equal to $\bbE \tfrac12 \big( \xi_{t+h}(v) - \xi_t (v) \big) = 
	\rho_{t+h} - \rho_t$. The l.h.s. is
\begin{equation}
	\sum_{v: v \sim u} \bbE \big[ e^+_{t+h}(u,v) - e^+_{t}(u,v) \big] \, - \,
	\sum_{v: v \sim u} \bbE \big[ e^-_{t+h}(u,v) - e^-_{t}(u,v) \big] 
	= d \delta_t h (\lambda - 1) + O(h^2) \,.
\end{equation}
Equating the two sides, dividing by $h$ and taking $h \to 0$ we obtain~\eqref{eqn:126}.

Now from the previous part we know that $\xi_\infty^{\xi_0} := \lim_{t \to \infty} \xi_t^{\xi_0}$ exists almost-surely and its distribution is supported on $\{\emptyset, \bbT\}$. Since ergodicity is preserved in the (strong) limit, it follows that $\xi_\infty^{\xi_0}$ is either $\emptyset$ a.s. or $\bbT$ a.s. If $\bbP(\xi_0 \in \cdot) \neq \delta_\emptyset$, then $\rho_0 > 0$ and since the r.h.s. of~\eqref{eqn:126} is non-negative it must be that also $\rho_\infty := \bbP(\xi_\infty^{\xi_0}(x) = +) > 0$. This leaves only the option $\xi_\infty^{\xi_0} = \bbT$ almost surely, which implies $\bbP(\Omega_c^{\xi_0}) = 1$.

It remains to show that infinite infected components must form in finite time. We first show that if at time $t \geq 0$ all components are finite almost-surely, then 
\begin{equation}
\label{eqn:130}
	\delta_t \geq C \rho_t \,.
\end{equation}
for some $C > 0$. This follows again from the mass-transport principle. Denote by $\partial_\bbT^- U$ the internal vertex boundary of $U$, that is, the set of vertices in $U$ with neighbors that are not in $U$. Let every infected vertex $v$ under $\xi^{\xi_0}_t$, send one unit of mass to the interior vertex boundary of its component, divided equally among these boundary vertices. The expected amount of mass a vertex sends is $\rho_t$. Thus, by the mass-transport principle, the expected amount of mass a vertex receives is $\rho_t$ as well. 

On the other hand, a vertex receives a positive mass if and only if it is infected and
lies in $\partial^-_\bbT U$ where $U$ is the infected component to which it belongs. In this case it receives a total mass of $|U|/|\partial^-_\bbT U|$. Since $\bbT$ is non-amenable, this ratio is bounded above by a constant $C' < \infty$. If we let $\beta_t$ be the probability a given vertex is in the interior boundary of an infected component of $\xi_t$, then these considerations lead to the inequality $\beta_t \geq C'^{-1} \rho_t$. Since $\delta_t \geq \beta_t / d$, we have shown~\eqref{eqn:130}.

Now suppose that $\bbP(\xi_0 \in \cdot)$ is as in part~\ref{part:AU2} of the theorem and that for all time $t \geq 0$, the probability of the existence of an infinite infected component in $\xi^{\xi_0}_t$ is $0$. Then~\eqref{eqn:126} and~\eqref{eqn:130} together imply that 
$\rho_t \geq \rho_0 e^{C'' t}$
for some $C'' > 0$. Since $\rho_0 > 0$, this would imply $\rho_{t} > 1$ for some $t > 0$, which is a contradiction. Therefore, there is some $t \geq 0$, for which $\xi^{\xi_0}_t$ has infinite components with positive probability. Since the latter event is automorphism-invariant and since ergodicity is carried over to $\xi_t^{\xi_0}$ for every $t > 0$, the latter probability must be $1$.

\section*{Acknowledgments}
The authors would like to thank Tom Liggett, Noam Berger and Elchanan Mossel for many useful discussions.

\bibliographystyle{abbrv}
\bibliography{VoterSurvival}

\begin{thebibliography}{10}

\bibitem{biggins1995growth}
J.~Biggins.
\newblock The growth and spread of the general branching random walk.
\newblock {\em The Annals of Applied Probability}, 5(4):1008--1024, 1995.

\bibitem{BG1}
M.~Bramson and D.~Griffeath.
\newblock On the williams-bjerknes tumour growth model: I.
\newblock {\em The Annals of Probability}, 9(2):pp. 173--185.

\bibitem{BG2}
M.~Bramson and D.~Griffeath.
\newblock On the williams-bjerknes tumour growth model: Ii.
\newblock In {\em Mathematical Proceedings of the Cambridge Philosophical
  Society}, volume~88, pages 339--357. Cambridge Univ Press, 1980.

\bibitem{durrett1995intermediate}
R.~Durrett and R.~Schinazi.
\newblock Intermediate phase for the contact process on a tree.
\newblock {\em The Annals of Probability}, 23(2):668--673, 1995.

\bibitem{griffeath1979additive}
D.~Griffeath and D.~Griffeath.
\newblock {\em Additive and cancellative interacting particle systems}.
\newblock Springer-Verlag, 1979.

\bibitem{harris1976class}
T.~Harris.
\newblock On a class of set-valued markov processes.
\newblock {\em The Annals of Probability}, 4(2):175--194, 1976.

\bibitem{harris1978additive}
T.~Harris.
\newblock Additive set-valued markov processes and graphical methods.
\newblock {\em The Annals of Probability}, pages 355--378, 1978.

\bibitem{kesten1973contribution}
H.~Kesten.
\newblock Contribution to the discussion of kingman (1973).
\newblock 1973.

\bibitem{lalley1998limit}
S.~Lalley and T.~Sellke.
\newblock Limit set of a weakly supercritical contact process on a homogeneous
  tree.
\newblock {\em The Annals of Probability}, 26(2):644--657, 1998.

\bibitem{liggett1996multiple}
T.~Liggett.
\newblock Multiple transition points for the contact process on the binary
  tree.
\newblock {\em The Annals of Probability}, 24(4):1675--1710, 1996.

\bibitem{liggett1999stochastic}
T.~Liggett.
\newblock {\em Stochastic interacting systems: contact, voter and exclusion
  processes}, volume 324.
\newblock Springer, 1999.

\bibitem{liggett2004interacting}
T.~Liggett.
\newblock {\em Interacting particle systems}, volume 276.
\newblock Springer, 2004.

\bibitem{lyons2005probability}
R.~Lyons and Y.~Peres.
\newblock Probability on trees and networks, 2005.

\bibitem{MS1}
N.~Madras and R.~Schinazi.
\newblock Branching random walks on trees.
\newblock {\em Stochastic Processes and their Applications}, 42(2):255 -- 267,
  1992.

\bibitem{BDchain}
B.~J. Morgan.
\newblock Four approaches to solving the linear birth-and-death (and similar)
  processes.

\bibitem{pemantle1992contact}
R.~Pemantle.
\newblock The contact process on trees.
\newblock {\em The Annals of Probability}, pages 2089--2116, 1992.

\bibitem{richardson1973random}
D.~Richardson.
\newblock Random growth in a tessellation.
\newblock In {\em Proc. Cambridge Philos. Soc}, volume~74, pages 515--528.
  Cambridge Univ Press, 1973.

\bibitem{salzano1998new}
M.~Salzano and R.~Schonmann.
\newblock A new proof that for the contact process on homogeneous trees local
  survival implies complete convergence.
\newblock {\em Annals of probability}, pages 1251--1258, 1998.

\bibitem{schwartz1977applications}
D.~Schwartz.
\newblock Applications of duality to a class of markov processes.
\newblock {\em The Annals of Probability}, pages 522--532, 1977.

\bibitem{stacey1996existence}
A.~Stacey.
\newblock The existence of an intermediate phase for the contact process on
  trees.
\newblock {\em The Annals of Probability}, 24(4):1711--1726, 1996.

\bibitem{sudbury1997quantum}
A.~Sudbury and P.~Lloyd.
\newblock Quantum operators in classical probability theory. iv. quasi-duality
  and thinnings of interacting particle systems.
\newblock {\em The Annals of Probability}, 25(1):96--114, 1997.

\bibitem{williams1971stochastic}
T.~Williams and R.~Bjerknes.
\newblock A stochastic model for the spread of an abnormal clone through the
  basal layer of the epithelium.
\newblock In {\em Symp. Tobacco Research Council, London}, 1971.

\bibitem{zhang1996complete}
Y.~Zhang.
\newblock The complete convergence theorem of the contact process on trees.
\newblock {\em The Annals of Probability}, pages 1408--1443, 1996.

\end{thebibliography}

\end{document}